\newtheorem{theorem}{Theorem}[section]
\newtheorem{proposition}[theorem]{Proposition}
\newtheorem{lemma}[theorem]{Lemma}
\newtheorem{corollary}[theorem]{Corollary}
\theoremstyle{remark}
\newtheorem{remark}[theorem]{Remark}
\numberwithin{equation}{section}
\begin{document}

\title[Affine Hecke algebras and Macdonald spherical functions]
{Unitary representations of affine Hecke algebras related to Macdonald spherical functions}

\author{J.F.  van Diejen}
\address{
Instituto de Matem\'atica y F\'{\i}sica, Universidad de Talca,
Casilla 747, Talca, Chile}
\email{diejen@inst-mat.utalca.cl}

\author{E. Emsiz}
\address{
Facultad de Matem\'aticas, Pontificia Universidad Cat\'olica de Chile,
Casilla 306, Correo 22, Santiago, Chile}
\email{eemsiz@mat.puc.cl}

\subjclass[2000]{Primary: 05E05; Secondary: 20C08, 33D80} \keywords{symmetric functions, affine Hecke algebras, spherical functions}

\thanks{Work supported in part by the {\em Fondo Nacional de Desarrollo
Cient\'{\i}fico y Tecnol\'ogico (FONDECYT)} Grants \# 1090118,
11100315 and by the {\em Anillo ACT56 `Reticulados y Simetr\'{\i}as'}
financed by the  {\em Comisi\'on Nacional de Investigaci\'on
Cient\'{\i}fica y Tecnol\'ogica (CONICYT)}}

\date{January 2012}

\begin{abstract}
For any reduced crystallographic root system, we introduce a unitary representation  of
the (extended) affine Hecke algebra given by discrete difference-reflection operators acting in a Hilbert space of complex functions on the weight lattice. It is shown that the action of the center under this representation is diagonal on the basis of Macdonald spherical functions. As an application, we compute an explicit Pieri formula for these spherical functions.
\end{abstract}

\maketitle

\section{Introduction}\label{sec1}
It is well-known that Macdonald's spherical functions (on $p$-adic symmetric spaces)---also referred to as generalized Hall-Littlewood polynomials associated with root systems---are intimately connected with the theory of affine Hecke algebras \cite{mac:spherical1,mac:spherical2,nel-ram:kostka}. In a nutshell, the Macdonald spherical functions form a canonical basis of the spherical subalgebra of the affine Hecke algebra obtained from a monomial basis via the so-called Satake isomorphism. For an overview of these and many other facts concerning Macdonald's spherical functions and their relations with affine Hecke algebras we refer the reader to the comprehensive survey \cite{nel-ram:kostka}.

The interplay between affine Hecke algebras and Macdonald spherical functions has proven very fruitful. For instance, affine Hecke algebras turn out to be  instrumental in obtaining explicit combinatorial formulas for the monomial expansion and for the structure constants (or Littlewood-Richardson type coefficients) of the Macdonald spherical functions \cite{par:buildings,ram:alcove,sch:galeries}.  Reversely, properties of Macdonald's spherical functions---in particular Macdonald's orthogonality relations
and the (generalized) Kostka-Foulkes coefficients describing the transition between Macdonald's spherical functions and the basis of Weyl characters---are fundamental, respectively, in the harmonic analysis of the affine Hecke algebra \cite{opd:spectral} and for the explicit computation of the Kazhdan-Lusztig basis for the spherical Hecke algebra  \cite{nel-ram:kostka,kno:kazhdan-lusztig}.

The present paper studies the properties of a concrete difference-reflection representation of the affine Hecke algebra and its relations to the theory of Macdonald's spherical functions.
Specifically, we introduce an explicit unitary representation  of
the (extended) affine Hecke algebra in terms of discrete difference-reflection operators acting in a Hilbert space of
complex functions on the weight lattice and show that the action of its center under this
representation is diagonal on the basis of Macdonald spherical functions. The main technical difficulty in the diagonalization proof is the verification of  intertwining relations between our difference-reflection representation and a second auxiliary representation (in terms of discrete integral-reflection operators) that is dual to the standard induced polynomial representation of the affine Hecke algebra. As an application, we compute an explicit Pieri formula for the Macdonald spherical functions generalizing the Pieri formula for the Hall-Littlewood polynomials due to Morris (from root systems of type $A$ to arbitrary type) \cite{mor:note}.

Our results provide a link interpolating between the Hecke-algebraic techniques developed in the spectral theory of quantum integrable particle systems \cite{hec-opd:yang,ems-opd-sto:periodic} and those employed in Macdonald's theory of symmetric orthogonal polynomials \cite{mac:affine,che:double}.
Indeed, it is known that the Macdonald spherical functions tend in an appropriate continuum limit to the eigenfunctions of the Laplacian perturbed by a delta potential supported on (the hyperplanes of) the corresponding root system \cite{hec-opd:yang,die:plancherel}. In this limiting situation the role of the affine Hecke algebra is played by the Drinfeld-Lusztig graded (or degenerate) affine Hecke algebra \cite{hec-opd:yang,ems-opd-sto:periodic}. Specifically, our difference-reflection representation gets replaced by a representation of the graded affine Hecke algebra built of Dunkl-type differential-reflection operators, the discrete integral-reflection (or polynomial) representation gets replaced by a representation of the graded affine Hecke algebra in terms of Gutkin-Sutherland continuous integral-reflection operators, and the intertwining operator relating both representations is given by the Gutkin-Sutherland propagation operator \cite{gut-sut:completely,gut:integrable,hec-opd:yang,ems-opd-sto:periodic}. From this perspective, the present paper lifts this construction to the level of the affine Hecke algebra corresponding to the Macdonald spherical functions.
On the other hand, it is well-known that the Macdonald spherical functions are limiting cases of the celebrated Macdonald polynomials (corresponding to $q\to 0$) \cite{mac:orthogonal}. The Macdonald polynomials in turn diagonalize a commuting algebra of Macdonald difference operators that can be constructed by means of Cherednik's extension of the polynomial representation of the affine Hecke algebra to the level of the double affine Hecke algebra \cite{mac:affine,che:double}. From this perspective, the difference-reflection representation of the affine Hecke algebra studied here provides the corresponding concrete construction of the commuting algebra of discrete difference operators  that is diagonalized by the Macdonald spherical functions (and isomorphic to the Weyl-group invariant part  of the group algebra
over the weight lattice).

The paper is organized as follows. In Section \ref{sec2} notational preliminaries concerning affine Weyl groups and affine Hecke algebras are recalled.
In Section \ref{sec3}
our main representation of the affine Hecke algebra in terms of
difference-reflection operators is introduced.
The auxiliary representation of the affine Hecke algebra by integral-reflection operators  and its relation to the standard polynomial representation are described in Section \ref{sec4}.
Section \ref{sec5} introduces an intertwining operator between the difference-reflection representation and the auxiliary integral-reflection representation, which is then used to show that the action of the center under the
difference-reflection representation is diagonal on the Macdonald spherical functions.
In Section \ref{sec6}  the appropriate Hilbert space structure is provided for which the difference-reflection representation is unitary. From this viewpoint the Macdonald spherical function constitutes the kernel of the Fourier transform---between the Weyl-group invariant sector of this Hilbert space and a closure of the Weyl-group invariant  part of the group algebra of the weight lattice---diagonalizing the action of the center under our difference-reflection representation. Finally, in Section \ref{sec7} we use the difference-reflection representation
to compute the explicit Pieri formula for the Macdonald spherical functions.
Some technical details pertaining to the proof of the braid relations in Section \ref{sec3}
and the intertwining relations in Section \ref{sec5} are relegated to
Appendices \ref{appA} and \ref{appB}, respectively. Moreover, in
Appendix \ref{appC} a few illuminating explicit formulas are collected describing our principal objects of study
in the important special case of a root system of type $A_{N-1}$ (i.e., with the Weyl group being equal to the permutation
group $S_N$).

\section{Preliminaries}\label{sec2}
This section sets up the notation for affine Weyl groups and their Hecke algebras and recalls briefly some basic properties. A more thorough discussion with proofs can be found e.g. in the standard sources \cite{bou:groupes,mac:affine}.

\subsection{Affine Weyl group}
Let $R$ be a crystallographic root system spanning a real (finite-dimensional) Euclidean vector space
$V$ with inner product $\langle \cdot ,\cdot\rangle$. Throughout it will be assumed that $R$ is both irreducible and reduced (unless explicitly stated otherwise). Following standard conventions,   the dual root system is denoted by $R^\vee:=\{\alpha^\vee\mid \alpha\in R\}$ with $\alpha^\vee:=2\alpha /\langle \alpha ,\alpha\rangle$, the weight lattice by $P:=\{ \lambda\in V\mid \langle \lambda ,\alpha^\vee\rangle \in \mathbb{Z},
\forall \alpha \in R\}$, and for a (fixed) choice of positive roots $R^+$ we write
$P^+:=\{ \lambda\in P\mid \langle \lambda ,\alpha^\vee\rangle \geq 0,
\forall \alpha \in R^+\}$
for the corresponding cone of dominant weights and $C:= \{ x\in V \mid  \langle x,\alpha^\vee\rangle >0, \forall \alpha \in R^+\} $ and $A:= \{ x\in V \mid 0< \langle x,\alpha^\vee\rangle < 1, \forall \alpha \in R^+\} $ for the dominant Weyl chamber and Weyl alcove, respectively.

For $\alpha \in R^+$ and $k\in \mathbb{Z}$  let $s_{\alpha,k}:V\to V$ be the orthogonal
reflection across the hyperplane $V_{\alpha ,k}:=\{ x\in V \mid  \langle x ,\alpha^\vee\rangle = k \}$
and for $\lambda \in P$ let  $t_\lambda :V\to V$ be the translation of the form
$t_\lambda(x):=x+\lambda$ ($x\in V$).
The (finite) {\em Weyl group} generated by the reflections $s_{\alpha,0}$, $\alpha\in R^+$ is denoted by $W_0$ and we write $W$ for
the (extended) {\em affine Weyl group} generated by the elements of
$W_0$ and the translations $t_\lambda$, $\lambda\in P$.
The length of a group element $w\in W$ is defined as the cardinality
$\ell (w):= |S(w)|$ of the set
$S(w):=\{ V_{\alpha ,k} \mid
V_{\alpha ,k}\ \text{separates}\ A \ \text{and}\  wA \}$.  (We say that a hyperplane $V_{\alpha ,k}$ separates two (subsets of) points in $V$ if these are contained in distinct connected components of $V\setminus V_{\alpha ,k}$.)
A useful explicit formula to compute the lengths of (affine) Weyl group elements  is given by
\begin{equation}\label{length}
\ell (v t_\lambda ) = \sum_{\alpha\in R^+} | \langle \lambda ,\alpha^\vee\rangle +\chi (v\alpha)|
\quad (v\in W_0,\lambda\in P ),
\end{equation}
where $\chi:R\to \{ 0,1\}$ represents the characteristic function of $R^-:=R\setminus R^+$  (so, in particular, for $v\in W_0$ and
$\lambda,\mu\in P^+$ one has that $\ell (v t_\lambda )=\ell (v)+\ell (t_\lambda )$ and that $\ell (t_{v\mu})=\ell (t_\mu )$, $\ell (t_{\lambda +\mu})=\ell (t_\lambda )+\ell (t_\mu )$.)

Let $\alpha_1,\ldots,\alpha_n$ ($n:=\text{rank}(R)$) be the basis of simple roots for $R^+$ and let $\alpha_0$ be the positive root such that $\alpha_0^\vee$ is the highest root of $R^\vee$ (with respect to $(R^+)^\vee$).  We set $s_0:=s_{\alpha_0,1}$ and $s_j:=s_{\alpha_j,0}$ for $j=1,\ldots ,n$.
The finite Weyl group $W_0$ is generated by the reflections  across the boundary hyperplanes of $C$: $s_1,\ldots ,s_n$, and the affine Weyl group $W$ is generated by
 the (finite, Abelian) subgroup  of elements of length zero $\Omega :=\{ u\in W\mid uA=A\}$
and the reflections across the boundary hyperplanes of $A$:
$s_0,\ldots ,s_n$.

It is instructive to detail the algebraic structure of these presentations of $W_0$ and $W$ somewhat more explicitly.
Let $V_j$ denote the hyperplane fixed by $s_j$ ($j=0,\ldots ,n$).
The finite Weyl group $W_0$ amounts to the group generated by $s_1,\ldots ,s_n$ subject to the relations
\begin{equation}\label{W-rel1}
(s_js_k)^{m_{jk}}=1,
\end{equation}
with $\pi/m_{jk}$ being the angle between $V_j$ and $V_k$ if $j\neq k$
(so, in particular, $m_{kj}=m_{jk}$) and $m_{jk}=1$ when $j=k$.
To characterize $\Omega$ it is convenient to associate with $\lambda\in P$ the affine Weyl group element
$u_\lambda:= t_\lambda v_\lambda^{-1}$, where
$v_\lambda$ refers to the shortest element of $W_0$ mapping $\lambda$ to the closure of the antidominant Weyl chamber $-C$ (which implies that $u_\lambda$ is the shortest element of the coset $t_\lambda W_0$ and
$\ell(t_\lambda)=\ell (u_\lambda )+\ell (v_\lambda )$).
Upon setting $u_0:=1$ and $u_j:=u_{\omega_j}$ for $j=1,\ldots ,n$, where $\omega_1,\ldots ,\omega_n$
denote the basis of the fundamental weights, one has explicitly
\begin{equation}\label{finite-group}
\Omega = \{ u_j \mid j=0\ \text{or}\ \langle \omega_j,\alpha_0^\vee \rangle =1 \} .
\end{equation}
It is clear from the definition that the elements of $\Omega$ permute the hyperplanes
$V_0,\ldots ,V_n$. Furthermore, for $u\in\Omega$ with $uV_j=V_k$ one has that
\begin{equation}\label{W-rel2}
uu_j=u_ju=u_k \quad (u_j\in \Omega )\quad\text{and}\quad us_j=s_ku   \quad  (j=0,\ldots ,n).
\end{equation}
The affine Weyl group $W$ can now be characterized as the group generated by $s_0,\ldots ,s_n$ and the elements  $u\in\Omega$ \eqref{finite-group} subject to the relations
\eqref{W-rel1}, \eqref{W-rel2} (with the additional caveat that in the pathological case $n=1$ the order
$m_{10}=m_{01}=\infty$).

\subsection{Affine Hecke algebra}
Let $q:W\to \mathbb{R}\setminus \{ 0\}$ be a length multiplicative function, viz. (i)
$q_{ww^\prime}= q_w q_{w^\prime} $ if $\ell(ww^\prime )=\ell(w)+\ell(w^\prime ) $ and (ii)
$q_w=1$ if $ \ell(w)=0$. This implies that $q_{s_j}$ depends only on the conjugacy class of $s_j$
$(j=0,\ldots ,n)$, whence the value of $q_w$ is determined by  the number of
reflections (in the short roots and in the long roots, respectively) appearing in a reduced expression $w=us_{j_1}\cdots s_{j_\ell} $ (with $u\in\Omega$ and $\ell=\ell (w)$).
Following customary habits, the multiplicity function associated with the length multiplicative function will also be denoted by $q$.
This is the function $q:R^+\times \mathbb{Z}\to \mathbb{R}\setminus \{ 0\}$ such that
$q_{\alpha,k}=q_{s_j}$ if $V_{\alpha ,k}=V_j$  ($0\leq j\leq n$) and
$q_{\alpha^\prime,k^\prime}=q_{\alpha ,k}$ if $V_{\alpha^\prime ,k^\prime}=wV_{\alpha ,k}$ for some $w\in W$.
This implies that $q_{\alpha ,k}=q_{\alpha ,0}$ depends only on the length of $\alpha$.
Reversely, the length multiplicative function can be reconstructed from the multiplicity function via the formula
\begin{equation}\label{reconstruct}
q_w=\prod_{\substack{\alpha\in R^+,k\in\mathbb{Z}\\ V_{\alpha, k}\in S(w) }} q_{\alpha,k} .
\end{equation}
We will write $\mathcal{H}$ for the (extended) {\em affine Hecke algebra} associated with $W$ and $q$. This algebra can be characterized as the complex associative algebra with basis $T_w$, $w\in W$ satisfying
the {\em quadratic relations}
\begin{subequations}
\begin{equation}\label{quadratic-rel}
(T_j-q_j)(T_j+q_j^{-1})=0,\quad j=0,\ldots ,n,
\end{equation}
where $T_j:=T_{s_j}$ and $q_j:=q_{s_j}$, and
the {\em braid relations}
\begin{equation}\label{braid-rel}
T_{ww^\prime} = T_wT_{w^\prime}\quad\text{if}\
\ell(ww^\prime )=\ell(w)+\ell(w^\prime )  .
\end{equation}
\end{subequations}
The assignment $T_w\to T_w^*$ with
\begin{equation}
T_w^*:=T_{w^{-1}}
\end{equation}
extends to an antilinear anti-involution of $\mathcal{H}$ thus turning the affine Hecke algebra into an involutive or $*$-algebra.
The subalgebra of $\mathcal{H}$ spanned by the basis $T_w$, $w\in W_0$ is referred to as the finite {\em Hecke algebra} $\mathcal{H}_0$ (associated with $W_0$ and $q$).

The affine Hecke algebra $\mathcal{H}$ admits a simple presentation as the algebra generated by $T_0,\ldots ,T_n$ and $T_u$,
$u\in\Omega$ \eqref{finite-group} subject to the quadratic relations
 \eqref{quadratic-rel} (cf. Eq. \eqref{W-rel1} with $j=k$), the braid relations
 \begin{equation}\label{T-rel1}
\underbrace{T_jT_kT_j\cdots}_{m_{jk}\ {\rm factors}}
=\underbrace{T_kT_jT_k\cdots}_{m_{jk}\ {\rm factors}},\qquad j\neq k
\end{equation}
(cf. Eq. \eqref{W-rel1} with $j\neq k$), and the relations
\begin{equation}\label{T-rel2}
T_uT_{u_j}=T_{uu_j}=T_{u_k}\quad (u_j\in\Omega )\quad\text{and}\quad T_uT_j=T_kT_u   \quad (j=0,\ldots ,n),
\end{equation}
with $V_k =uV_j$ (cf. Eq. \eqref{W-rel2}).
The finite Hecke algebra $\mathcal{H}_0$ in turn amounts to the (sub)algebra generated by $T_1,\ldots ,T_n$
subject to the quadratic relations \eqref{quadratic-rel} and the braid relations \eqref{T-rel1}.

For $\lambda\in P$, the element
\begin{equation}
Y^\lambda := T_{t_\mu}T_{t_{\nu}}^{-1}\quad \text{with} \  \mu, \nu\in P^+\ \text{such that }\ \lambda = \mu-\nu
\end{equation}
is well-defined in the sense that it does not depend on the particular choice of the
decomposition of $\lambda$ as a difference of dominant weights $\mu$ and $\nu$.
Furthermore, the elements $Y^\lambda$, $\lambda\in P$ form a basis of a subalgebra of $\mathcal{H}$ isomorphic to the group algebra  of the weight lattice $\mathbb{C}[P]$:
\begin{subequations}
\begin{equation}\label{group-algebra-rel}
 Y^\lambda Y^\mu = Y^{\lambda +\mu}    \quad (\lambda,\mu \in P)\quad \text{and}\quad  Y^0=1,
\end{equation}
satisfying in addition the relations
\begin{equation}\label{cross-rel1}
T_jY^\lambda - Y^{s_j\lambda}T_j=(q_j-q_j^{-1})\frac{Y^\lambda-Y^{s_j\lambda}}{1-Y^{-\alpha_j}},
\quad (j=1,\ldots ,n).
\end{equation}
\end{subequations}

The elements $T_wY^\lambda$, $w\in W_0$, $\lambda\in P$ constitute a basis of $\mathcal{H}$, which gives rise to a second (very useful) presentation of the affine Hecke algebra (due to Bernstein, Lusztig, and Zelevinsky) as the algebra generated by $T_1,\ldots ,T_n$ and $Y^\lambda$, $\lambda\in P$ subject to the relations \eqref{quadratic-rel}, \eqref{T-rel1}, \eqref{group-algebra-rel} and
\begin{equation}\label{cross-rel2}
\begin{array}{lc}
T_jY^\lambda = Y^\lambda T_j  & \text{if}\ \langle \lambda ,\alpha_j^\vee\rangle =0 ,\\ [1ex]
T_jY^\lambda = Y^{s_j\lambda} T_j +  (q_j-q_j^{-1})Y^\lambda & \text{if}\ \langle \lambda ,\alpha_j^\vee\rangle =1
\end{array}
\end{equation}
 (cf. Eq. \eqref{cross-rel1} with
$\lambda\in V_{\alpha_j,0}\cup V_{\alpha_j,1}$). In other words, the affine Hecke algebra $
\mathcal{H}$ is a merger of the finite Hecke algebra $\mathcal{H}_0$ and the group algebra $\mathbb{C}[P]$ with the cross relations \eqref{cross-rel2}.

It can be seen with the aid of the latter presentation that the center $\mathcal{Z}$ of $\mathcal{H}$ is spanned
by
\begin{equation}
m_\lambda (Y):= \sum_{\mu\in W_0\lambda} Y^\mu,\quad \lambda\in P^+
\end{equation}
(and thus isomorphic to the $W_0$-invariant part $\mathbb{C}[P]^{W_0}$ of the group algebra of the weight lattice). Moreover, since
\begin{equation}
(Y^\lambda)^*=T_{w_o}Y^{-w_o\lambda}T_{w_o}^{-1}
\end{equation}
(where $w_o$ denotes the longest element of $W_0$),  one has that
\begin{equation}
m_\lambda(Y)^*=m_{\lambda^*}(Y),\quad\text{with}\quad \lambda^*:=-w_o\lambda.
\end{equation}

\section{Difference-reflection operators}\label{sec3}
In this section we introduce our main representation of the affine Hecke algebra in terms of
difference-reflection operators.

The action of the affine Weyl group on $P\subset V$ induces a representation of $W$
on the space $C(P):=\{ f\mid f:P\to\mathbb{C}\} $
\begin{equation}\label{W-action}
(wf)(\lambda ) := f(w^{-1}\lambda )\qquad (w\in W,\ \lambda\in P).
\end{equation}
We consider the following difference-reflection operators on $C(P)$
\begin{subequations}
\begin{equation}\label{Tj-operator}
\hat{T}_j:=q_j+\chi_j (s_j-1),\qquad j=0,\ldots ,n,
\end{equation}
where  $q_j$ and $\chi_j$ act by multiplication with
\begin{equation}\label{chi}
\chi_j (\lambda) :=
\begin{cases}
q_j&\text{if}\  V_j\  \text{separates}\ \lambda\ \text{and}\ A,\\
1 &\text{if}\ \lambda\in V_j, \\
q_j^{-1}&\text{otherwise} .
\end{cases}
\end{equation}
\end{subequations}

\begin{theorem}[Difference-Reflection Representation $\hat{T}(\mathcal{H})$]\label{dif-ref-rep:thm} The assignment $T_j\to\hat{T}_j$ ($j=0,\ldots ,n$) and $T_u\to u$ ($u\in\Omega$) extends (uniquely) to a representation $h\to \hat{T}(h)$ ($h\in \mathcal{H}$) of the affine Hecke algebra on $C(P)$.
\end{theorem}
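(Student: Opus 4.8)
The plan is to invoke the Coxeter-style presentation of $\mathcal{H}$ recorded in Section~\ref{sec2}: as an abstract algebra $\mathcal{H}$ is generated by $T_0,\dots,T_n$ together with $T_u$ ($u\in\Omega$) subject to the quadratic relations \eqref{quadratic-rel}, the braid relations \eqref{T-rel1}, and the $\Omega$-relations \eqref{T-rel2}. It therefore suffices to check that the operators $\hat{T}_0,\dots,\hat{T}_n$ together with the $W$-action operators $u$ ($u\in\Omega$) satisfy these three families of identities on $C(P)$; since the listed elements generate $\mathcal{H}$, the resulting assignment then extends to a unique algebra homomorphism $h\mapsto\hat{T}(h)$. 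Throughout I would exploit the elementary operator identity $s_j\chi_j=\chi_j^{-1}s_j$, which records that reflecting $\lambda$ across $V_j$ interchanges the two sides of $V_j$ and hence inverts the separation status, i.e. $\chi_j(s_j\lambda)=\chi_j(\lambda)^{-1}$, with the fixed locus $V_j$ carrying the self-inverse value $\chi_j=1$.

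For the quadratic relation I would set $\hat{T}_j-q_j=D_j$ with $D_j:=\chi_j(s_j-1)$, so that \eqref{quadratic-rel} is equivalent to $D_j^2=-(q_j+q_j^{-1})D_j$. Expanding $D_j^2$ and repeatedly using $s_j\chi_j=\chi_j^{-1}s_j$ together with $s_j^2=1$ collapses the cross-terms to
\[
D_j^2=(1+\chi_j^2)(1-s_j),
\]
whereas $-(q_j+q_j^{-1})D_j=(q_j+q_j^{-1})\chi_j(1-s_j)$. The two operators coincide because on $P\setminus V_j$ one has $\chi_j\in\{q_j,q_j^{-1}\}$, so the pointwise identity $1+\chi_j^2=(q_j+q_j^{-1})\chi_j$ holds there; on $V_j$ this identity fails (there $\chi_j=1$), but the factor $(1-s_j)$ produces functions vanishing along its fixed hyperplane $V_j$, so the discrepancy never contributes.

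The $\Omega$-relations are essentially formal. Each $u\in\Omega$ fixes the alcove $A$ and permutes the walls, say $uV_j=V_k$; consequently $q_k=q_j$, and since being separated from $A=uA$ is an $\Omega$-invariant notion, conjugation by $u$ carries $\chi_j$ to $\chi_k$ and $s_j$ to $s_k$. This gives $u\hat{T}_j u^{-1}=q_k+\chi_k(s_k-1)=\hat{T}_k$, i.e. the relation $u\hat{T}_j=\hat{T}_k u$ required by \eqref{T-rel2}; the remaining part $T_uT_{u_j}=T_{u_k}$ reduces to the group identity $uu_j=u_k$ of \eqref{W-rel2}, which the $W$-action satisfies tautologically.

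The braid relations \eqref{T-rel1} are the crux, and this is where I expect the real work to lie, which is presumably why the details are deferred to Appendix~\ref{appA}. The guiding idea is a reduction to rank two. For fixed $j\neq k$ the subgroup $W_{jk}:=\langle s_j,s_k\rangle$ is a finite dihedral group of order $2m_{jk}$ (finite because $V_j$ and $V_k$ intersect whenever $n\geq2$, the parallel case $n=1$ being excluded by the caveat $m_{01}=\infty$). Applied to $f$ at a point $\lambda$, both sides of \eqref{T-rel1} are linear combinations of the values $f(w\lambda)$, $w\in W_{jk}$, over the finite orbit $W_{jk}\lambda$, with coefficients built from the numbers $\chi_j,\chi_k$ evaluated along that orbit. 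The decisive simplification is that $\chi_j(\mu)$ depends only on the side of the \emph{single} wall $V_j$ on which $\mu$ lies relative to $A$, hence only on the dihedral sector (cut out by the mirrors of $W_{jk}$) containing $\mu$. The braid identity therefore localizes to a self-contained computation in the rank-two dihedral Hecke algebra, which I would settle by a case analysis according to $m_{jk}\in\{2,3,4,6\}$ (and the two admissible root lengths) together with the position of the sector of $\lambda$. This finite but somewhat intricate verification is the main obstacle; the preceding relations are comparatively routine.
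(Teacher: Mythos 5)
Your proposal is correct and takes essentially the same route as the paper: the quadratic and $\Omega$-relations are handled by the same direct computations (your identity $D_j^2=(1+\chi_j^2)(1-s_j)$ together with the vanishing of $(1-s_j)f$ on $V_j$ is exactly the paper's calculation in a different arrangement), and the braid relations are reduced, just as in the paper's Lemmas \ref{facets:lem} and \ref{rank2-reduction:lem}, to a finite facet-by-facet (sector-by-sector) check in the rank-two dihedral cases $m_{jk}\in\{2,3,4,6\}$, including the translated treatment of the affine wall when one index is $0$. The paper likewise leaves that finite verification to a routine, computer-assisted case computation (detailing only $A_2$ in Appendix \ref{appA}), so your deferral of it matches the paper's own level of detail.
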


Inferring this theorem amounts to verifying the relations
\begin{subequations}
\begin{eqnarray}
\label{qr} (\hat{T}_j-q_j)(\hat{T}_j+q_j^{-1})=0&& (0\leq j\leq n), \\
\label{br} \underbrace{ \hat{T}_j\hat{T}_k\hat{T}_j\cdots}_{m_{jk}\ {\rm factors}}
=
 \underbrace{ \hat{T}_k\hat{T}_j\hat{T}_k\cdots}_{m_{jk}\ {\rm factors}}
&& (0\leq j\neq k\leq n),\\
\label{cr} u\hat{T}_j=\hat{T}_k u\ \ \text{if}\  \  uV_j =V_k&&  (u\in\Omega,\ 0\leq j\leq n).
\end{eqnarray}
\end{subequations}
The  quadratic relations in Eq. \eqref{qr} follow from a short computation:
\begin{align*}
\hat{T}_j^2&=q_j^2+(2q_j\chi_j-1-\chi_j^2)(s_j-1) \\
&=q_j^2+(q_j-q_j^{-1})\chi_j(s_j-1)=(q_j-q_j^{-1})\hat{T}_j+1,
\end{align*}
where we used (in the second identity) that for $\lambda\not\in V_j$
$$2q_j\chi_j-1-\chi_j^2=(q_j-q_j^{-1})\chi_j=\begin{cases} q_j^2-1&\text{if}\ \chi_j=q_j \\
1-q_j^{-2}&\text{if}\ \chi_j=q_j^{-1} \end{cases},$$
together with the observation that for any $f\in C(P)$ the difference $(s_jf)(\lambda)-f(\lambda)$ vanishes
when $\lambda\in V_j$.
The commutation relations in Eq. \eqref{cr} are in turn immediate from the definition of $\hat{T}_j$ and the corresponding affine Weyl group relations
in Eq. \eqref{W-rel2}.
The proof of the braid relations in Eq. \eqref{br} is a bit more intricate and
hinges on two lemmas that require some additional notation.
For $x\in V$ let $W_{0,x} \subset W_0$ denote the stabilizer subgroup $\{ w\in W_0 \mid wx =x \}$. We will consider the following equivalence relation on $V$:
$x\sim y$ iff
 $W_{0,x}=W_{0,y}$ and both points
lie on the closure of the same Weyl chamber $wC$ (for some $w\in W_0$).
The finite number of equivalence classes of $V$ with respect to the relation $\sim$ are called facets and constitute the so-called Coxeter complex $\mathcal{C}$ of $W_0$.

\begin{lemma}\label{facets:lem}
Let $\hat{D}$ be an operator in $\mathbb{C}\bigl\langle\hat{T}_1,\ldots ,\hat{T}_n\bigr\rangle$ and let $\lambda ,\mu\in P$ with $\lambda\sim\mu$. Then
\begin{equation}\label{facets}
  (\hat{D}f)(\lambda)=0\quad\forall f\in C(P)
\Longrightarrow
 (\hat{D}f)(\mu)=0\quad\forall f\in C(P) .
 \end{equation}
\end{lemma}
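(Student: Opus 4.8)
The plan is to expand any $\hat{D}\in\mathbb{C}\langle\hat{T}_1,\ldots,\hat{T}_n\rangle$ into the normal form
$$\hat{D}=\sum_{w\in W_0} M_{a_w}\, w ,$$
where $M_{a_w}$ denotes multiplication by a function $a_w\in C(P)$ and $w$ acts via \eqref{W-action}. This is possible because each generator reads $\hat{T}_j=M_{q_j-\chi_j}+M_{\chi_j}s_j$, and because a reflection commutes past a multiplication operator through the rule $s_j M_g=M_{s_jg}\,s_j$, so that products close up inside this span. Crucially, only the \emph{finite} reflections $s_1,\ldots,s_n\in W_0$ occur (we have deliberately excluded $\hat{T}_0$); hence every Weyl-group element and every intermediate evaluation point reached in the expansion stays inside the finite orbit $W_0\lambda$.

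Evaluating gives $(\hat{D}f)(\lambda)=\sum_{w\in W_0}a_w(\lambda)\,f(w^{-1}\lambda)$. Two terms hit the same lattice point precisely when the corresponding Weyl-group elements lie in a common right coset of the stabilizer $W_{0,\lambda}$, so grouping yields
$$(\hat{D}f)(\lambda)=\sum_{W_{0,\lambda}w}\Bigl(\sum_{w'\in W_{0,\lambda}w}a_{w'}(\lambda)\Bigr)\,f(w^{-1}\lambda) ,$$
a finite sum over the distinct points $w^{-1}\lambda$. Testing against delta functions, the hypothesis $(\hat{D}f)(\lambda)=0$ for all $f$ is therefore equivalent to the vanishing of every coset sum $\sum_{w'\in W_{0,\lambda}w}a_{w'}(\lambda)$. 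The assertion will follow at once if I show that this system of scalar conditions is \emph{literally the same} at $\lambda$ and at $\mu$, for which it suffices to establish (a) $W_{0,\lambda}=W_{0,\mu}$ and (b) $a_w(\lambda)=a_w(\mu)$ for every $w\in W_0$.

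Condition (a) is part of the definition of $\sim$. For (b) I read off from the expansion that $a_w(\lambda)$ is a polynomial (with coefficients assembled from the $q_j$) in the quantities $\chi_j(\nu)$ with $\nu\in W_0\lambda$ and $1\le j\le n$; by \eqref{chi} each such value depends only on the sign of $\langle\nu,\alpha_j^\vee\rangle$. Since, as $\nu=w''\lambda$ and $j$ vary, the relevant roots $(w'')^{-1}\alpha_j$ run through all of $R$, it is enough to check that $\mathrm{sign}\langle\lambda,\beta^\vee\rangle=\mathrm{sign}\langle\mu,\beta^\vee\rangle$ for every $\beta\in R$. This is exactly where the two clauses defining $\sim$ combine: because $\lambda$ and $\mu$ lie on the closure of a common chamber $wC$, one has $\langle\lambda,\beta^\vee\rangle,\langle\mu,\beta^\vee\rangle\ge 0$ for all $\beta\in wR^+$; and because $W_{0,\lambda}=W_{0,\mu}$ is generated by the reflections it contains, the two points are annihilated by exactly the same roots, i.e. $\langle\lambda,\beta^\vee\rangle=0\Leftrightarrow\langle\mu,\beta^\vee\rangle=0$. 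Matching zero-sets together with a common weak sign on each $\beta\in wR^+$ forces equality of all signs, yielding (b).

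The main obstacle is this sign-matching step: first the bookkeeping that records $a_w(\lambda)$ as a sum of products of $\chi$-values read off along walks through $W_0\lambda$, and then the use of the classical fact that a point stabilizer in $W_0$ is generated by the reflections it contains, so that the stabilizer pins down precisely the orthogonal roots. Once these are in hand, the coset-sum reformulation renders the implication — in fact the full equivalence — transparent.
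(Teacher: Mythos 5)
Your proof is correct, but it takes a genuinely different route from the paper's. The paper never expands $\hat{D}$: given $f$, it transports the test function, choosing $\tilde{f}\in C(P)$ with $\tilde{f}(w\lambda)=f(w\mu)$ for all $w\in W_0$ (well defined precisely because $W_{0,\lambda}=W_{0,\mu}$) and observing that $(\hat{D}f)(\mu)=(\hat{D}\tilde{f})(\lambda)$, since $w\lambda$ and $w\mu$ are never separated by the hyperplanes $V_1,\ldots,V_n$ --- which is exactly the sign-matching statement $\mathrm{sign}\langle\lambda,\beta^\vee\rangle=\mathrm{sign}\langle\mu,\beta^\vee\rangle$ for all $\beta\in R$ that you prove by hand at the end. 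You instead put the operator in the normal form $\sum_{w\in W_0}M_{a_w}w$, test against delta functions, and reduce the lemma to (a) equality of stabilizers plus (b) equality of the coefficients $a_w$ at $\lambda$ and $\mu$, with (b) following from the same sign data because each $a_w(\lambda)$ is a polynomial in the $\chi_j$-values along the orbit $W_0\lambda$ (your expansion bookkeeping here is sound: pushing the reflections to the right through $s_jM_g=M_{s_jg}s_j$ produces exactly such coefficients, and since only $\hat{T}_1,\ldots,\hat{T}_n$ occur, all hyperplanes pass through the origin so that $\chi_j(\nu)$ is indeed a function of $\mathrm{sign}\langle\nu,\alpha_j^\vee\rangle$). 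What each approach buys: yours is more explicit and delivers the full equivalence --- the vanishing of the coset sums $\sum_{w'\in W_{0,\lambda}w}a_{w'}(\lambda)$ is an iff-criterion --- while the paper's transport trick is shorter, requires no expansion of $\hat{D}$, and, as noted in Remark \ref{extension:rem}, carries over verbatim from $P$ to the affine Coxeter complex. One small simplification to your argument: you do not need the (true but deeper) fact that point stabilizers in $W_0$ are generated by the reflections they contain; the matching of zero sets already follows from the elementary equivalence $s_\beta\in W_{0,\lambda}\Leftrightarrow\langle\lambda,\beta^\vee\rangle=0$ combined with $W_{0,\lambda}=W_{0,\mu}$.
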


\begin{proof}
Given $f\in C(P)$ and  $\lambda,\mu\in P$ with $\lambda\sim\mu$, pick an $\tilde{f}\in C(P)$ such that
$\tilde{f}(w\lambda )=f(w\mu )$ for all $w\in W_0$.  (Such a function $\tilde{f}$ exists, since
$w\lambda = w^\prime \lambda \Rightarrow w^{-1}w^\prime\in W_{0,\lambda} =W_{0,\mu} \Rightarrow
w\mu = w^\prime \mu$.) From the definition of the difference-reflection operators $\hat{T}_1,\ldots ,\hat{T}_n$ it is then immediate that $(\hat{D}f)(\mu )=(\hat{D}\tilde{f})(\lambda )$ (because $w\mu$ and $w\lambda$ ($w\in W_0$) cannot be separated by the hyperplanes $V_j$, $1\leq j\leq n$ as both weights lie on the same facet). Hence, the hypothesis on the LHS of Formula \eqref{facets} implies that for {\em any} $f\in C(P)$: $(\hat{D}f)(\mu )=(\hat{D}\tilde{f})(\lambda )=0$.
\end{proof}

\begin{lemma}\label{rank2-reduction:lem}
For any affine Weyl group $W$, the braid relations in Eq. \eqref{br} follow from
the braid relations corresponding to the {\em finite} Weyl groups $W_0$ associated with the (not necessarily irreducible) root systems of rank two.
\end{lemma}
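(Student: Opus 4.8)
The plan is to reduce the verification of \eqref{br} for the full affine Weyl group $W$ to a verification over the finite Weyl subgroups attached to rank-two root subsystems, exploiting the facet-locality established in Lemma \ref{facets:lem}. First I would observe that each braid relation in \eqref{br} involves only two generators $\hat T_j,\hat T_k$, and that the relation is an identity between operators in $\mathbb{C}\langle \hat T_0,\ldots,\hat T_n\rangle$; to prove it I must show that $(\hat D f)(\lambda)=0$ for all $\lambda\in P$ and all $f\in C(P)$, where $\hat D$ denotes the difference of the two sides. Since $\hat T_0$ is a translate of a finite reflection operator (recall $s_0=s_{\alpha_0,1}=t_{\alpha_0}s_{\alpha_0,0}$, and $\Omega$ acts by the relations \eqref{cr}), I would use the relations \eqref{W-rel2}, \eqref{cr} to conjugate any affine generator $\hat T_0$ into the ``finite'' form, so that after an appropriate translation and $\Omega$-conjugation the two indices $j,k$ correspond to honest finite reflections $s_{\beta,0}, s_{\gamma,0}$ in the short and long directions spanning a rank-two subsystem.

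The key mechanism is as follows. Fix the two hyperplanes $V_j,V_k$ appearing in the braid word; they meet at angle $\pi/m_{jk}$ and determine a rank-two root subsystem $R'\subset R$ together with its finite Weyl group $W_0'=\langle s_j,s_k\rangle$ (of type $A_1\times A_1$, $A_2$, $B_2$, or $G_2$). The operators $\hat T_j,\hat T_k$ acting at a given weight $\lambda$ only ``see'' the local chamber geometry through the multiplier $\chi_j(\lambda)$ in \eqref{chi}, which records on which side of $V_j$ the point $\lambda$ lies relative to $A$. The essential point is that within a single facet of the Coxeter complex $\mathcal{C}$ this local data is constant, so by Lemma \ref{facets:lem} it suffices to check the vanishing $(\hat D f)(\lambda)=0$ at a single representative $\lambda$ in each facet. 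Since the operators $\hat T_j,\hat T_k$ generate a two-dimensional reflection action, their repeated application to $f$ only samples the $W_0'$-orbit of $\lambda$, and the identity $(\hat D f)(\lambda)=0$ becomes a purely two-dimensional statement depending only on $R'$ and on the multiplicities $q_j,q_k$. This is precisely the braid relation for the rank-two finite Weyl group $W_0'$.

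Concretely I would argue: given $\lambda\in P$, choose using Lemma \ref{facets:lem} a representative $\mu\sim\lambda$ whose stabilizer and chamber position are adapted to the subgroup $W_0'$; then restrict attention to the finite-dimensional span of $\{ s\,f \mid s\in W_0'\}$ evaluated along the orbit $W_0'\mu$. On this span the operators $\hat T_j,\hat T_k$ reduce to the corresponding generators of the difference-reflection representation for the rank-two system, because the remaining simple reflections fix $\mu$ or move it into a different facet where $\chi$ is locally constant and the separation data stabilizes. Thus the braid word $\hat D$ evaluated at $\mu$ coincides with the analogous rank-two operator, and its vanishing is exactly the hypothesized rank-two braid relation. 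Transporting back by $\sim$ through Lemma \ref{facets:lem} gives $(\hat D f)(\lambda)=0$ for all $\lambda\in P$, as required.

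The main obstacle I anticipate is the bookkeeping in the reduction from affine to finite: one must verify that conjugating $\hat T_0$ into finite form via $\Omega$ and translations is compatible with the multiplier $\chi_j$ of \eqref{chi}, i.e.\ that the separation conditions defining $\chi_j$ transform correctly under $t_\lambda$ and under $u\in\Omega$ as dictated by \eqref{cr}, so that the local rank-two picture is genuinely the finite one. Equally delicate is confirming that the facet argument of Lemma \ref{facets:lem} legitimately collapses the infinitely many weights $\lambda\in P$ into finitely many rank-two configurations without losing any case; in particular one must ensure that the pairs of hyperplanes $(V_j,V_k)$ arising from non-adjacent or affine generators are correctly matched to a genuine rank-two subsystem $R'$. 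Once this geometric dictionary is in place, the remaining verification is the finite, case-by-case rank-two computation (types $A_1\times A_1,A_2,B_2,G_2$), which is routine and can be relegated to an appendix.
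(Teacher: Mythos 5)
Your reduction for the \emph{finite} pairs $1\leq j\neq k\leq n$ is essentially the paper's argument, albeit stated loosely: since $s_j,s_k$ act trivially on the orthogonal complement of $V_{jk}:=\mathrm{Span}_{\mathbb{R}}(\alpha_j,\alpha_k)$ and the multipliers $\chi_j,\chi_k$ of \eqref{chi} depend only on the orthogonal projection of $\lambda$ onto $V_{jk}$ (with the projected alcove contained in the rank-two alcove, and the projected lattice equal to the weight lattice $P(R_{jk})$), the operators $\hat T_j,\hat T_k$ factor as rank-two difference-reflection operators tensored with the identity, and the braid relation becomes the rank-two one. But your treatment of the genuinely affine pairs $(0,k)$ --- which are the whole point of the lemma, since the finite braid relations are hypotheses --- has a real gap. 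First, the proposed conjugation of $\hat T_0$ into ``finite form'' via $\Omega$ and translations does not work: $\Omega$ is trivial for $E_8$, $F_4$ and $G_2$ (and even when nontrivial it only permutes the hyperplanes $V_0,\ldots,V_n$ among the special nodes), while conjugating by a translation $t_\lambda$ moves the reference alcove in the cutoff along with the hyperplane, so $t_\lambda\hat T_0 t_\lambda^{-1}=q_0+\chi'(s_{\alpha_0,1+\langle\lambda,\alpha_0^\vee\rangle}-1)$ with $\chi'$ recording separation from $t_\lambda A$ rather than from $A$; this is not an operator of the form \eqref{Tj-operator} for any generator, so no ``honest finite reflection'' is produced. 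Second, Lemma \ref{facets:lem} is stated (and proved) only for $\hat D\in\mathbb{C}\langle\hat T_1,\ldots,\hat T_n\rangle$ and for the facets of the \emph{finite} Coxeter complex $\mathcal{C}$; the multiplier $\chi_0$ is not constant on these facets (the open dominant chamber is a single facet crossed by $V_0=V_{\alpha_0,1}$), so you cannot invoke it to collapse braid words containing $\hat T_0$ to finitely many representatives. In the paper the facet lemma enters only \emph{after} the rank-two reduction, to finish the finite rank-two case checks.

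What the paper actually does for the affine pair is different from conjugation: it keeps the operators as they are, projects onto $V_{0k}=\mathrm{Span}_{\mathbb{R}}(\alpha_0,\alpha_k)$, and identifies the resulting picture with the difference-reflection operators of the \emph{translated} rank-two root system with basis $-\alpha_0,\alpha_k$ and origin at $V_{0k}\cap V_0\cap V_k$, so that $s_0,s_k$ become the simple reflections of a finite rank-two Weyl group relative to the new origin. The price is a lattice mismatch --- the projection of $P$ onto $V_{0k}$ is the weight lattice of the \emph{untranslated} rank-two subsystem, not of the translated one --- and this is absorbed by extending the representation to piecewise-constant functions on the affine Coxeter complex (Remark \ref{extension:rem}), for which the Hecke relations and the facet lemma remain valid. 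Your proposal flags precisely this compatibility of $\chi_j$ with the change of origin as the ``main obstacle,'' but offers no mechanism to resolve it; as it stands, the affine case of the reduction is not established.
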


\begin{proof}
Without restriction we may assume that $n\geq 2$ (as for $n=1$ there is no braid relation to check since
then $m_{01}=m_{10}=\infty$).
For any pair  $1\leq j\neq k\leq n$, the reflections  $s_j, s_k$ generate a finite Weyl group corresponding to the rank-two root subsystem $R_{jk}$ with basis $\alpha_j,\alpha_k$. The Weyl group in question acts trivially on the orthogonal complement $V_{jk}^\perp$ of $V_{jk}:=\text{Span}_{\mathbb{R}}(\alpha_j,\alpha_k)$ in $V$. It follows that the action of $\hat{T}_j$ and $\hat{T}_k$ on $C(P)$ extends to a decomposition of the form $\hat{T}_j({R_{jk}})\otimes 1$ and $\hat{T}_k({R_{jk}})\otimes 1$
on $F(P_{jk})\otimes F(V_{jk}^\perp)\supset C(P)$, where $P_{jk}$ denotes the image of the orthogonal projection of $P$ onto $V_{jk}$ (and $F(P_{jk})$,  $F(V_{jk}^\perp)$ are the spaces of complex functions on $P_{jk}$ and $V_{jk}^\perp$, respectively). Here $\hat{T}_j({R_{jk}})$ and $\hat{T}_k({R_{jk}})$ refer to the
corresponding operators on $F(P_{jk})$ associated with the simple reflections of $R_{jk}$. (Notice in this connection that $P_{jk}$ amounts to the weight lattice $P(R_{jk})$ associated with $R_{jk}$ and that the image $A_{jk}$ of the alcove $A$ under the orthogonal projection onto $V_{jk}$ is contained in the Weyl alcove $A(R_{jk})$ associated with the basis $\alpha_j,\alpha_k$ of $R_{jk}$.)
The upshot is that the braid relations
for $\hat{T}_j$ and $\hat{T}_k$ follow from the braid relations for $\hat{T}_j({R_{jk}})$ and $\hat{T}_k({R_{jk}})$.
If one of the two indices ($j$ say) takes the value $0$, then the above arguments apply verbatim upon picking for $R_{0k}$ the translated rank-two root system with basis  $-\alpha_0, \alpha_k$ relative to the origin at
$V_{0k}\cap V_0\cap V_k $ (where $V_{0k}=\text{Span}_\mathbb{R}(\alpha_0,\alpha_k)$). (Now the projection $P_{0k}$ of $P$ onto $V_{0k}$ amounts rather to the
weight lattice of the untranslated rank-two root subsystem with basis $-\alpha_0,\alpha_k$, but this is no obstacle in view of Remark \ref{extension:rem} below.)
\end{proof}

By Lemma \ref{rank2-reduction:lem}, it is sufficient to verify the braid relations
$\hat{T}_1\hat{T}_2\hat{T}_1\cdots=\hat{T}_2\hat{T}_1\hat{T}_2\cdots$ (with $m_{12}$ factors on both sides) associated with the simple reflections $s_1$ and $s_2$ for the  root systems
$A_1\times A_1$, $A_2$, $B_2$,  and $G_2$ (for which
$m_{12}=2$, $3$, $4$, and $6$, respectively). Moreover by Lemma \ref{facets:lem}---upon acting with both sides on an arbitrary lattice function in $C(P)$---it is only needed to
verify these braid relations on a finite $W_0$-invariant set of weights representing the facets
of the Coxeter complex $\mathcal{C}$.
This reduces the verification of Eq. \eqref{br} to a routine case-by-case computation that is somewhat tedious by hand for the three root systems other than  $A_1\times A_1$ (and particularly so for the root systems $B_2$ and $G_2$)
but completely straightforward to perform in all four cases with the aid of symbolic computer algebra.
To illustrate the idea of the computation in question we have outlined the details for the root system $A_2$
in Appendix \ref{appA}.

\begin{remark}\label{extension:rem}
The action of the affine Weyl group in Eq. \eqref{W-action} and the operators $\hat{T}_j$ \eqref{Tj-operator}, \eqref{chi} make
in fact sense on the space $F(\mathcal{A})$ of complex functions on
the Coxeter complex $\mathcal{A}$ of the {affine} Weyl group (which may also be seen as the space of functions on $V$ that are piecewise constant on the {affine} facets). (Here the affine facets are the equivalence classes of $V$ with
points being equivalent if they belong to the closure of the same Weyl alcove $wA$ ($w\in W$) and have the same
stabilizer inside the affine Weyl group.)
The space $C(P)$ can be naturally embedded into $F(\mathcal{A})$ as the space of functions with support in the affine facets containing a weight (since points differing by a nonzero weight necessarily belong to distinct affine facets).
With this extension of the domain, the Hecke-algebra relations in Eqs. \eqref{qr}-\eqref{cr} remain valid. Indeed, Lemma \ref{facets:lem} and its proof generalize verbatim from $P$ to $\mathcal{A}$. In other words,
the representation  in Theorem \ref{dif-ref-rep:thm} extends naturally to a representation of the affine Hecke algebra on the space $F(\mathcal{A})$.
\end{remark}

\section{Integral-reflection operators}\label{sec4}
In this section we describe the auxiliary representation of the affine Hecke algebra in terms of integral-reflection operators. The representation in question is dual to a standard polynomial representation of the affine Hecke algebra on the group algebra of the weight lattice.

We consider the following integral-reflection operators on $C(P)$ associated with the simple reflections $s_1,\ldots ,s_n$:
\begin{subequations}
\begin{equation}\label{int-op1}
I_j := q_js_j+(q_j-q_j^{-1})J_j ,\quad j=1,\ldots ,n,
\end{equation}
where $J_j:C(P)\to C(P)$ denotes a discrete integral operator which---grosso modo---integrates the lattice function
$f(\lambda )$ over the $\alpha_j$-string from $\lambda$ to $s_j\lambda$:
\begin{eqnarray}\label{int-op2}
\lefteqn{(J_jf)(\lambda ) :=} && \\
&& \begin{cases}
-f(\lambda-\alpha_j)-
f(\lambda-2\alpha_j)-\cdots -f(s_j\lambda) &\text{if}\ \langle \lambda ,\alpha_j^\vee\rangle > 0 ,\\
0&\text{if}\  \langle \lambda ,\alpha_j^\vee\rangle = 0 ,\\
f(\lambda)+f(\lambda+\alpha_j)+\cdots +f(s_j\lambda-\alpha_j)
&\text{if}\  \langle \lambda ,\alpha_j^\vee\rangle < 0 .
\end{cases} \nonumber
\end{eqnarray}
\end{subequations}

\begin{proposition}[Integral-Reflection Representation $I(\mathcal{H})$]\label{int-ref-rep:prp}
The assignment $T_j\to I_j$ ($j=1,\ldots ,n$) and $Y^\lambda\to t_\lambda$ ($\lambda\in P$) extends (uniquely)
to a representation $h\to I(h)$ ($h\in \mathcal{H}$) of the affine Hecke algebra on $C(P)$.
\end{proposition}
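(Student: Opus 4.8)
The plan is to verify that the integral-reflection operators $I_j$ ($j=1,\ldots,n$) and the translations $t_\lambda$ ($\lambda\in P$) satisfy the Bernstein–Lusztig–Zelevinsky defining relations of $\mathcal{H}$, namely the quadratic relations \eqref{quadratic-rel}, the finite braid relations \eqref{T-rel1}, the group-algebra relations \eqref{group-algebra-rel}, and the cross relations \eqref{cross-rel2}. Since $t_\lambda t_\mu = t_{\lambda+\mu}$ and $t_0=1$ on $C(P)$, the relations \eqref{group-algebra-rel} hold trivially, so the substance lies in the remaining three families. First I would verify the quadratic relation $(I_j-q_j)(I_j+q_j^{-1})=0$: writing $I_j=q_js_j+(q_j-q_j^{-1})J_j$ and expanding $I_j^2$, one uses $s_j^2=1$ together with the key identities relating $J_j$ to $s_j$. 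The operator $J_j$ is engineered precisely as a telescoping discrete integral over the $\alpha_j$-string, and a direct computation should show $J_j s_j + s_j J_j = -(J_j + \text{something})$ type cancellations; concretely the crucial algebraic fact is that $s_j J_j s_j = -s_j - J_j$ (or an equivalent rewriting), from which $I_j^2 = (q_j-q_j^{-1})I_j + 1$ follows by bookkeeping.

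Next I would establish the cross relations \eqref{cross-rel2}, i.e. $I_j t_\lambda = t_\lambda I_j$ when $\langle\lambda,\alpha_j^\vee\rangle=0$ and $I_j t_\lambda = t_{s_j\lambda}I_j + (q_j-q_j^{-1})t_\lambda$ when $\langle\lambda,\alpha_j^\vee\rangle=1$. The geometric content is that conjugating the difference/integral structure of $I_j$ by the translation $t_\lambda$ shifts the whole $\alpha_j$-string by $\lambda$; when $\lambda\perp\alpha_j^\vee$ the string is preserved setwise and $I_j$ commutes with $t_\lambda$, whereas when $\langle\lambda,\alpha_j^\vee\rangle=1$ the shift moves the string by exactly one step, producing the single correction term $(q_j-q_j^{-1})t_\lambda$. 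I expect this to reduce, after applying both sides to a test function $f\in C(P)$ and evaluating at an arbitrary weight, to a comparison of two finite telescoping sums that differ by exactly one boundary term.

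The main obstacle will be the braid relations \eqref{T-rel1} for the $I_j$, exactly as it was for the difference-reflection operators in Section \ref{sec3}. Here I would exploit the strong structural parallel with Theorem \ref{dif-ref-rep:thm}: the same rank-two reduction (in the spirit of Lemma \ref{rank2-reduction:lem}) applies, since the pair $s_j,s_k$ acts through the rank-two root subsystem $R_{jk}$ and $I_j,I_k$ extend as $I_j(R_{jk})\otimes 1$, $I_k(R_{jk})\otimes 1$ on the corresponding tensor decomposition of $C(P)$. Thus it suffices to check the braid relation $\underbrace{I_1I_2I_1\cdots}_{m_{12}}=\underbrace{I_2I_1I_2\cdots}_{m_{12}}$ for the four rank-two systems $A_1\times A_1$, $A_2$, $B_2$, $G_2$. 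An analogue of the facet Lemma \ref{facets:lem} should let me reduce to finitely many representative weights per Coxeter facet, rendering the verification a finite case-by-case computation. Alternatively—and this is the cleaner route suggested by the proposition's own framing—I would \emph{not} check the braid relations directly but instead construct the explicit duality/intertwiner between $I(\mathcal{H})$ and the standard induced polynomial representation of $\mathcal{H}$ on $\mathbb{C}[P]$; transporting the already-established braid relations of the polynomial representation through this intertwiner then yields \eqref{T-rel1} for the $I_j$ for free, sidestepping the delicate rank-two bookkeeping.
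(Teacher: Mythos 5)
Your preferred route (the second one) is essentially the paper's own proof: the paper introduces the pairing $(f,p)=(\bar p f)(0)$ of Eq.~\eqref{pairing}, computes $\check T_j e^\lambda$ for the Demazure--Lusztig operators \eqref{demazure} to obtain the adjointness $(I_jf,e^\lambda)=(f,\check T_je^\lambda)$ (Lemma \ref{finite-rep:lem}), and transports \emph{both} the quadratic relations \eqref{quadratic-rel} and the braid relations \eqref{T-rel1} from the standard polynomial representation through this nondegenerate duality; the cross relations \eqref{cross-rel2} are then checked directly, exactly as you propose, by reducing via $s_jt_\lambda=t_{s_j\lambda}s_j$ to the telescoping identities $J_jt_\lambda=t_\lambda J_j$ (if $\langle\lambda,\alpha_j^\vee\rangle=0$) and $J_jt_\lambda=t_{s_j\lambda}J_j+t_\lambda$ (if $\langle\lambda,\alpha_j^\vee\rangle=1$), with \eqref{group-algebra-rel} trivial. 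Two caveats on your alternative ingredients. First, your direct attack on the quadratic relation misstates the key identity: it is $s_jJ_js_j=1-s_j-J_j$, equivalently $s_jJ_j+J_js_j=s_j-1$ (your version omits the identity term), and you additionally need the idempotency $J_j^2=J_j$; with these two facts one indeed gets $I_j^2=(q_j-q_j^{-1})I_j+1$, but the duality renders this computation superfluous. Second, and more seriously, your fallback route for the braid relations via an ``analogue of Lemma \ref{facets:lem}'' would \emph{not} work as stated: the proof of that lemma depends crucially on the locality of the difference-reflection operators (the value $(\hat Df)(\mu)$ involves $f$ only on the $W_0$-orbit of $\mu$, and orbit points on a common facet are never separated by the hyperplanes $V_j$), whereas $(J_jf)(\lambda)$ sums $f$ over the full $\alpha_j$-string from $\lambda$ to $s_j\lambda$, whose length grows with $|\langle\lambda,\alpha_j^\vee\rangle|$. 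Consequently the value of a word in the $I_j$'s at a weight is not controlled by facet data, no finite set of facet representatives suffices, and the case-by-case check would not be finite (the rank-two reduction itself survives, but not the facet reduction). So your instinct to prefer the duality route is correct --- of your two proposed routes it is the only one that goes through as written, and it coincides with the paper's argument.
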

In the remainder of this section the proposition is proved by exploiting that $I(\mathcal{H})$ may be seen
as the dual of a standard representation of the affine Hecke algebra in terms of Demazure-Lusztig operators.

Let us denote by  $e^\lambda$, $\lambda\in P$ the standard basis of the group algebra $\mathbb{C}[P]$ (so $e^\lambda e^\mu = e^{\lambda+\mu}$ and $e^0=1$) and consider the following nondegenerate sesquilinear pairing $(\cdot, \cdot ):  C(P)\times  \mathbb{C}[P]\to\mathbb{C}$
\begin{equation}\label{pairing}
(f, p):=(\bar{ p} f)(0 )\qquad ( f\in C(P),p\in \mathbb{C}[P]),
\end{equation}
where $\bar{p}$ refers to the complex conjugate $\sum_\lambda \bar{c}_\lambda e^\lambda$ of $p=\sum_\lambda c_\lambda e^\lambda$ ($c_\lambda\in\mathbb{C}$), and the action of  $ \mathbb{C}[P]$ on $f$ is determined by $e^\lambda f :=t_\lambda f$ (so, in particular, $(f,e^\lambda )=(t_\lambda f)(0)=f(-\lambda )$). We will use the notational convention $(p,f):=\overline{(f,p)}$.
The action of $W$ on $P$ lifts to an action of the affine Weyl group on $\mathbb{C}[P]$ via $we^\lambda:=e^{w\lambda}$ ($w\in W$, $\lambda\in P$). Notice that with these conventions
 $(v t_\lambda f,p)=(f ,t_\lambda v^{-1} p)$ ($v\in W_0$, $\lambda\in P$, $f\in C(P)$, $p\in\mathbb{C}[P]$), i.e.
the action of $W_0$ is `unitary'  and the action of $P$ is `symmetric' with respect to the above pairing.

It is well-known (cf. e.g.  Ref. \cite{mac:affine}) that the trivial one-dimensional representation $T_j\to q_j$ ($j=1,\ldots ,n$) of $\mathcal{H}_0$ on $\mathbb{C}$ immediately induces a representation of the finite Hecke algebra on the group algebra through the relations in Eq. \eqref{cross-rel1}. Indeed, the latter representation $h\to \check{T}(h)$
of $\mathcal{H}_0$ on  $\mathbb{C}[P]$
is generated by the Demazure-Lusztig operators:
\begin{equation}\label{demazure}
\check{T}_j:=q_j s_j +(q_j-q_j^{-1}) (1-e^{-\alpha_j})^{-1}(1-s_j), \qquad j=1,\ldots ,n.
\end{equation}

Proposition \ref{int-ref-rep:prp} is now a direct consequence of the two subsequent lemmas and the Bernstein-Lusztig-Zelevinsky presentation of the affine Hecke algebra with the relations in Eq. \eqref{cross-rel2}.

\begin{lemma}\label{finite-rep:lem}
The assignment $T_j\to I_j$ ($j=1,\ldots ,n$) extends (uniquely)
to a representation $h\to I(h)$ ($h\in \mathcal{H}_0$) of the finite Hecke algebra on $C(P)$, i.e.
\begin{subequations}
\begin{eqnarray}
(I_j-q_j)(I_j+q_j^{-1})=0&& (1\leq j\leq n), \\
\underbrace{ I_jI_kI_j\cdots}_{m_{jk}\ {\rm factors}}
=
 \underbrace{ I_kI_jI_k\cdots}_{m_{jk}\ {\rm factors}} &&  (1\leq j\neq k \leq n) .
\end{eqnarray}
\end{subequations}
\end{lemma}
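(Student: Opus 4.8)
The statement to be proved is Lemma~\ref{finite-rep:lem}, asserting that the integral-reflection operators $I_1,\ldots,I_n$ satisfy the quadratic and braid relations of the finite Hecke algebra $\mathcal{H}_0$. The natural strategy, signalled by the sentence preceding the lemma, is to \emph{dualize}: rather than grind through the relations for $I_j$ directly on $C(P)$, I would show that under the pairing $(\cdot,\cdot)$ of Eq.~\eqref{pairing} the operator $I_j$ is adjoint (up to the sesquilinear twist) to the Demazure-Lusztig operator $\check{T}_j$ of Eq.~\eqref{demazure} acting on $\mathbb{C}[P]$. Since $h\to\check{T}(h)$ is \emph{already known} to be a representation of $\mathcal{H}_0$ (it is the induced representation generated by the trivial character, as recalled just above via Eq.~\eqref{cross-rel1}), transporting these relations across a nondegenerate pairing immediately yields the relations for the $I_j$.

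\textbf{Key steps, in order.} First I would establish the crucial adjointness identity
\begin{equation}\label{adjoint-plan}
(I_j f,\,p)=(f,\,\check{T}_j\, p)\qquad(f\in C(P),\ p\in\mathbb{C}[P],\ 1\le j\le n).
\end{equation}
This is the technical heart of the proof. Because the pairing reduces to evaluation at $0$ of $\bar p f$, and because $\mathbb{C}[P]$ is spanned by the monomials $e^\lambda$, it suffices to verify \eqref{adjoint-plan} for $p=e^\lambda$, i.e.\ to check that $(I_j f)(-\lambda)$ equals $(f,\check T_j e^\lambda)=\overline{(\check T_j e^\lambda\text{ applied to }f)}$ evaluated appropriately at $0$. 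Here one unwinds $\check T_j e^\lambda$ using Eq.~\eqref{demazure}: the factor $(1-e^{-\alpha_j})^{-1}(1-s_j)e^\lambda$ expands as a \emph{finite} geometric-type sum $e^\lambda+e^{\lambda-\alpha_j}+\cdots+e^{s_j\lambda+\alpha_j}$ (with the opposite sign and the complementary range when $\langle\lambda,\alpha_j^\vee\rangle<0$, and vanishing when $\langle\lambda,\alpha_j^\vee\rangle=0$), which is precisely the telescoping string appearing in the definition \eqref{int-op2} of $J_j$. Matching the three cases $\langle\lambda,\alpha_j^\vee\rangle>0,\ =0,\ <0$ of $J_j$ against the corresponding expansions of $(1-e^{-\alpha_j})^{-1}(1-s_j)e^\lambda$, together with the `unitarity' of $s_j$ and the multiplication-by-$q_j$ term, gives \eqref{adjoint-plan}. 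Second, once \eqref{adjoint-plan} holds, I observe that composition dualizes: $(I_jI_k f,p)=(I_k f,\check T_j p)=(f,\check T_k\check T_j p)$, so a word $I_{j_1}\cdots I_{j_m}$ pairs against the \emph{reversed} word $\check T_{j_m}\cdots\check T_{j_1}$. Third, I apply this to the two relations. For the quadratic relation, $((I_j-q_j)(I_j+q_j^{-1})f,p)=(f,(\check T_j+q_j^{-1})(\check T_j-q_j)p)=0$ since $(\check T_j-q_j)(\check T_j+q_j^{-1})=0$ and these factors commute; as $p$ ranges over a basis and the pairing is nondegenerate, $(I_j-q_j)(I_j+q_j^{-1})=0$. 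For the braid relation, note that the braid word is a palindrome, so reversal maps $\hat{I_jI_kI_j\cdots}$ to $\cdots\check T_j\check T_k\check T_j$, i.e.\ to the braid word in the $\check T$'s read in the opposite order; the braid relation for the $\check T_j$ therefore transports to the braid relation for the $I_j$ after again invoking nondegeneracy of the pairing.

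\textbf{Where the difficulty lies.} The only genuinely delicate point is verifying the adjointness identity \eqref{adjoint-plan}, and specifically the careful bookkeeping of the three sign/range cases in $J_j$ versus the expansion of the Demazure-Lusztig numerator $(1-e^{-\alpha_j})^{-1}(1-s_j)$. One must be scrupulous about which endpoints of the $\alpha_j$-string are included, about the sign flip between the $\langle\lambda,\alpha_j^\vee\rangle>0$ and $<0$ cases, and about the interaction of complex conjugation (the bar on $p$) with the reflection $s_j$ and with the real multiplicities $q_j$. Everything else—deducing the quadratic and braid relations from their $\check T$-counterparts—is a formal consequence of nondegeneracy of the pairing and the fact that the braid word is a palindrome, and requires no computation beyond reversing the order of a word. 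It is worth noting that the representation property for $\check T$ is \emph{assumed known} (it is the standard induced polynomial representation), so no independent verification of the $\check T_j$ relations is needed here.
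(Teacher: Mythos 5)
Your proposal is correct and follows essentially the same route as the paper: the paper likewise verifies the adjointness $(I_jf,e^\lambda)=(f,\check{T}_je^\lambda)$ by expanding $\check{T}_je^\lambda$ in the three cases $\langle\lambda,\alpha_j^\vee\rangle>0,=0,<0$ and matching against the definition of $J_j$, then transports the known quadratic and braid relations for the Demazure--Lusztig operators through the nondegenerate pairing, encoding your word-reversal observation via the anti-involution $h\to h^*$ (i.e.\ $(I(h)f,p)=(f,\check{T}(h^*)p)$), under which the quadratic relations are fixed and the braid relations map to braid relations. (Your ``palindrome'' remark is literally true only for odd $m_{jk}$, but this is immaterial since the reversal of a braid word is again a braid word with the roles of $j$ and $k$ swapped.)
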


\begin{proof}
By acting with the Demazure-Lusztig operator $\check{T}_j $ \eqref{demazure}
on the basis element $e^\lambda$ it is seen that
\begin{align*}
\check{T}_je^\lambda &=q_j e^{s_j\lambda}+
(q_j-q_j^{-1})\frac{e^\lambda -e^{\lambda-\langle\lambda,\alpha_j^\vee\rangle\alpha_j}}{1-e^{-\alpha_j}} \\
&=q_j e^{s_j\lambda}+(q_j-q_j^{-1})\times
\begin{cases}
e^\lambda + e^{\lambda-\alpha_j}+\cdots + e^{s_j\lambda+\alpha_j}
&\text{if}\ \langle \lambda,\alpha_j^\vee\rangle > 0 ,\\
0 &\text{if}\  \langle \lambda,\alpha_j^\vee\rangle = 0,\\
-e^{\lambda+\alpha_j} - e^{\lambda+2\alpha_j}-\cdots - e^{s_j\lambda}
&\text{if}\ \langle \lambda,\alpha_j^\vee\rangle < 0 ,
\end{cases}
\end{align*}
whence  $(I_jf,e^\lambda)=(f,\check{T}_j e^\lambda )$ ($f\in C(P)$, $\lambda\in P$).
The quadratic relations and braid relations for $I_1,\ldots ,I_n$ thus follow from those for
$\check{T}_1,\ldots ,\check{T}_n$ (and $( I(h)f,p)=(f,\check{T}({h^*})p)$, $h\in\mathcal{H}_0$, $f\in C(P)$, $p\in\mathbb{C}[P]$).
\end{proof}

\begin{lemma}
The operators $I_j$ ($j=1,\ldots ,n$) and $t_\lambda $ ($\lambda\in P$) on $C(P)$ satisfy the cross relations
\begin{equation}
\begin{array}{lc}
I_j t_\lambda = t_\lambda I_j  & \text{if}\ \langle \lambda ,\alpha_j^\vee\rangle =0 ,\\ [1ex]
I_j t_\lambda = t_{s_j\lambda} I_j +  (q_j-q_j^{-1})t_\lambda & \text{if}\ \langle \lambda ,\alpha_j^\vee\rangle =1 .
\end{array}
\end{equation}
\end{lemma}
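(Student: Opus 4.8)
The plan is to prove the lemma by duality, exactly paralleling the proof of Lemma \ref{finite-rep:lem}. The two ingredients are already in place. From the computation in that proof one has the adjoint relation $(I_jf,p)=(f,\check{T}_jp)$ for all $f\in C(P)$, $p\in\mathbb{C}[P]$ (obtained by extending $(I_jf,e^\lambda)=(f,\check{T}_je^\lambda)$ conjugate-linearly in $p$), while the symmetry of the $P$-action recorded after \eqref{pairing} gives $(t_\lambda f,p)=(f,t_\lambda p)=(f,e^\lambda p)$, since the translation $t_\lambda$ acts on $\mathbb{C}[P]$ precisely as multiplication by $e^\lambda$. In other words, $I_j$ and $t_\lambda$ on $C(P)$ are dual, with respect to the nondegenerate pairing \eqref{pairing}, to $\check{T}_j$ and to multiplication by $e^\lambda$ on $\mathbb{C}[P]$, respectively.

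First I would transpose the two desired cross relations through the pairing. Testing $I_jt_\lambda f$ against $t_{s_j\lambda}I_jf$ (resp. $t_\lambda I_jf$) paired with an arbitrary $p\in\mathbb{C}[P]$, and moving every operator onto the second slot via the two adjoint formulas above, the nondegeneracy of \eqref{pairing} reduces the claim to the operator identities on $\mathbb{C}[P]$
\begin{equation*}
e^\lambda\check{T}_j=\check{T}_je^\lambda \quad (\langle\lambda,\alpha_j^\vee\rangle=0),\qquad
e^\lambda\check{T}_j=\check{T}_je^{s_j\lambda}+(q_j-q_j^{-1})e^\lambda\quad(\langle\lambda,\alpha_j^\vee\rangle=1),
\end{equation*}
where $e^\mu$ now denotes the multiplication operator.

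It then remains to verify these two identities for the Demazure-Lusztig operators. Both are instances of the Bernstein-Lusztig-Zelevinsky relation \eqref{cross-rel1}, which holds in the polynomial representation $\check{T}$ (this being the very relation through which $\check{T}$ was induced). Indeed, applying \eqref{cross-rel1} with $\lambda$ replaced by $s_j\lambda$ gives $\check{T}_je^{s_j\lambda}-e^\lambda\check{T}_j=(q_j-q_j^{-1})\frac{e^{s_j\lambda}-e^\lambda}{1-e^{-\alpha_j}}$, and the rational factor collapses to $0$ when $\langle\lambda,\alpha_j^\vee\rangle=0$ (then $s_j\lambda=\lambda$) and to $-e^\lambda$ when $\langle\lambda,\alpha_j^\vee\rangle=1$ (then $e^{s_j\lambda}=e^{\lambda-\alpha_j}=e^\lambda e^{-\alpha_j}$, so the numerator factors as $e^\lambda(e^{-\alpha_j}-1)$). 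Substituting these back yields exactly the two displayed identities.

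The only genuine bookkeeping hazard is keeping the order of composition straight under the adjoint (the dual of a product reverses the order of the factors) together with the sign produced when $\frac{e^{s_j\lambda}-e^\lambda}{1-e^{-\alpha_j}}$ is simplified; once these are handled the argument is purely mechanical. Alternatively, one could bypass \eqref{cross-rel1} altogether and check the two operator identities directly on the basis $e^\nu$ using the explicit action $\check{T}_je^\nu=q_je^{s_j\nu}+(q_j-q_j^{-1})(e^\nu-e^{s_j\nu})/(1-e^{-\alpha_j})$ computed in the proof of Lemma \ref{finite-rep:lem}, which reduces everything to the same elementary simplification.
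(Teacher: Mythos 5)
Your proof is correct, but it takes a genuinely different route from the paper's. The paper disposes of this lemma by a short direct computation: writing $I_j=q_js_j+(q_j-q_j^{-1})J_j$ and using $s_jt_\lambda=t_{s_j\lambda}s_j$, the claim reduces to the identities $J_jt_\lambda=t_\lambda J_j$ (when $\langle\lambda,\alpha_j^\vee\rangle=0$) and $J_jt_\lambda=t_{s_j\lambda}J_j+t_\lambda$ (when $\langle\lambda,\alpha_j^\vee\rangle=1$), which are checked by comparing the explicit sums in Eq. \eqref{int-op2} term by term. You instead transpose everything through the pairing \eqref{pairing}; your bookkeeping is accurate: the dual statements are indeed $e^\lambda\check{T}_j=\check{T}_je^\lambda$ and $e^\lambda\check{T}_j=\check{T}_je^{s_j\lambda}+(q_j-q_j^{-1})e^\lambda$ on $\mathbb{C}[P]$ (order of factors correctly reversed), nondegeneracy in the first slot suffices to descend back to $C(P)$ (since $(g,e^{-\mu})=g(\mu)$), the conjugate-linear extension is harmless because $q_j$ is real, and your simplification of the rational factor of \eqref{cross-rel1} with $\lambda$ replaced by $s_j\lambda$ (collapsing to $0$ resp. $-e^\lambda$) is right. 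The trade-off is this: your argument is uniform with the paper's own duality proof of Lemma \ref{finite-rep:lem} and in effect anticipates Remark \ref{duality:rem}, but it takes as input that the Demazure--Lusztig operators together with multiplication by $e^\lambda$ satisfy \eqref{cross-rel1} on $\mathbb{C}[P]$, i.e.\ the full polynomial representation $\check{T}(\mathcal{H})$ of the \emph{affine} Hecke algebra. That fact is standard (the paper cites it to Macdonald via the induced-module construction), so there is no circularity, but the paper only records the full affine statement afterwards, in Remark \ref{duality:rem}; the authors' direct check with $J_j$ is self-contained and needs nothing beyond the definitions, which is presumably why they abandon duality precisely at this lemma. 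Your fallback of verifying the two operator identities directly on the monomial basis $e^\nu$ is also sound and would make your argument independent of the induced-module input, bringing it closer in spirit (though not in mechanics) to the paper's elementary verification.
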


\begin{proof}
Since $s_jt_\lambda=t_{s_j\lambda}s_j$, it is sufficient to infer that
$J_jt_\lambda=t_\lambda J_j$ if $\langle \lambda ,\alpha_j^\vee\rangle =0$ and that
$J_jt_\lambda=t_{s_j\lambda} J_j+t_\lambda $ if $\langle \lambda ,\alpha_j^\vee\rangle =1$.
Both identities are seen to hold manifestly upon acting on an arbitrary function in $C(P)$ and comparing the terms on both sides (taking into account that $s_j\lambda=\lambda -\langle\lambda,\alpha_j^\vee\rangle \alpha_j$).
\end{proof}

\begin{remark}\label{duality:rem}
By Eqs. \eqref{group-algebra-rel}, \eqref{cross-rel1}, the Demazure-Lusztig operators $\check{T}_j$ ($j=1,\ldots ,n$) together with the multiplicative action of the basis elements $e^\lambda$ ($\lambda\in P$) in fact determine
a representation $h\to \check{T}(h)$ ($h\in \mathcal{H}$) of the affine Hecke algebra on $\mathbb{C}[P]$  (extending the assignment $T_j\to\check{T}_j$ ($j=1,\ldots ,n$), $Y^\lambda \to e^\lambda$ ($\lambda\in P$)). Furthermore, the mapping
$T_w Y^\lambda \to Y^{\lambda} T_{w^{-1}}$ ($w\in W_0$, $\lambda\in P$) extends to an antilinear anti-involution
$\star$ of
$\mathcal{H}$ (agreeing with the previous $*$-anti-involution on the subalgebra $\mathcal{H}_0$).  With respect to the new $\star$-anti-involution and the pairing in Eq. \eqref{pairing} the integral-reflection representation in Proposition \ref{int-ref-rep:prp} is dual to the polynomial representation $\check{T}(\mathcal{H})$ in the sense that $( I(h) f , p)=(f,\check{T}(h^\star )p)$ ($h\in\mathcal{H}$, $f\in C(P)$, $p\in\mathbb{C}[P]$).
\end{remark}

\begin{remark}\label{int-refl:rem}
The integral-reflection operators $I_j$ \eqref{int-op1}, \eqref{int-op2} constitute a discrete counterpart of integral-reflection operators introduced by Gutkin and Sutherland in the context of their study of the spectral problem for the Laplacian perturbed by a delta potential supported on the reflection hyperplanes of the root system $R$
\cite{gut-sut:completely,gut:integrable}. Proposition \ref{int-ref-rep:prp} is the corresponding analog of
the observation in Ref. \cite{hec-opd:yang} that these Gutkin-Sutherland integral-reflection operators determine a representation of the Drinfeld-Lusztig graded affine Hecke algebra.
\end{remark}

\section{Diagonalization of $\hat{T}(\mathcal{Z})$}\label{sec5}
In this section we diagonalize the action of the center of $\mathcal{H}$ under our difference-reflection representation by means of Macdonald's spherical functions. Our main tool is an intertwining operator relating the difference-reflection representation to the auxiliary integral-reflection (or polynomial) representation.

We will employ the shorthand notation
$\hat{T}_w:=\hat{T}(T_w)$ and $I_w:=I(T_w)$ ($w\in W$).

\subsection{Intertwining operator}
Let
$\mathcal{J}:C(P)\to C(P)$ be the operator defined by
\begin{equation}\label{i-op}
(\mathcal{J}f)(\lambda ) := q_{\bar{u}_{\lambda}} (I_{\bar{u}_{\lambda} }^{-1}  f, 1) \qquad (f\in C(P),\lambda\in P),
\end{equation}
where $\bar{u}_\lambda:=w_o u_{w_o\lambda}w_o=t_\lambda w^{-1}_{\lambda}$  with
\begin{equation*}
w_\lambda :=w_o v_{w_o\lambda}w_o=v_{-\lambda}
\end{equation*}
(i.e. $w_\lambda$ is the shortest element of $W_0$ mapping $\lambda$ into the dominant cone $P^+$) and $(\cdot ,\cdot )$ refers to the pairing in Eq. \eqref{pairing}. Notice that
\begin{equation}
(\mathcal{J}f)(\lambda )=q_{t_\lambda}q_{w_\lambda}(I_{w_\lambda^{-1}}^{-1}f)(\lambda_+ )\quad \text{with}\quad \lambda_+:= w_\lambda \lambda .
\end{equation}
So in particular,
on the dominant cone $\mathcal{J}$ acts simply as a multiplication operator:  $(\mathcal{J}f)(\lambda )=q_{t_\lambda}f(\lambda)$ for  $\lambda\in P^+$.

\begin{theorem}[Intertwining Property]\label{intertwining:thm}
The operator $\mathcal{J}:C(P)\to C(P)$ \eqref{i-op} enjoys the following intertwining property, connecting the difference-reflection representation $ \hat{T}(\mathcal{H})$ with the integral-reflection representation
$ I(\mathcal{H})$
\begin{equation}\label{intertwining}
\hat{T}_w  \mathcal{J} = \mathcal{J} I_w,
\end{equation}
(for all $ w\in W$).
\end{theorem}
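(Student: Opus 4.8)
The plan is to establish the intertwining relation $\hat{T}_w \mathcal{J} = \mathcal{J} I_w$ for all $w \in W$ by reducing it to the case of the algebra generators. Since both $\hat{T}(\mathcal{H})$ and $I(\mathcal{H})$ are representations of $\mathcal{H}$ (by Theorem~\ref{dif-ref-rep:thm} and Proposition~\ref{int-ref-rep:prp}) and $\mathcal{J}$ is a single fixed operator, the set of elements $h \in \mathcal{H}$ for which $\hat{T}(h)\mathcal{J} = \mathcal{J} I(h)$ holds is closed under multiplication and linear combination. Hence it suffices to verify \eqref{intertwining} on a generating set. Using the Bernstein--Lusztig--Zelevinsky presentation underlying $I(\mathcal{H})$, the natural generators are $T_j$ ($j=1,\ldots,n$) together with the elements $Y^\lambda$ ($\lambda \in P$), whose images under $I$ are $I_j$ and the translations $t_\lambda$ respectively. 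Thus I would reduce the theorem to proving the two families of identities
\begin{equation*}
\hat{T}_j \mathcal{J} = \mathcal{J} I_j \quad (j=1,\ldots,n), \qquad \hat{T}_{t_\lambda} \mathcal{J} = \mathcal{J} t_\lambda \quad (\lambda \in P).
\end{equation*}

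\textbf{The translation identities} I expect to be the more tractable family. Here the explicit description of $\mathcal{J}$ on the dominant cone---namely $(\mathcal{J}f)(\lambda) = q_{t_\lambda} f(\lambda)$ for $\lambda \in P^+$---together with the factorization $(\mathcal{J}f)(\lambda) = q_{t_\lambda} q_{w_\lambda}(I_{w_\lambda^{-1}}^{-1} f)(\lambda_+)$ should let me track how $\mathcal{J}$ interacts with the translation operators $t_\lambda$. The length relations recalled in Section~\ref{sec2} (in particular $\ell(t_{\lambda+\mu}) = \ell(t_\lambda) + \ell(t_\mu)$ for $\lambda,\mu \in P^+$ and the multiplicativity of $q$) should make the $q_{t_\lambda}$ prefactors combine correctly, while the definition of $\bar{u}_\lambda = t_\lambda w_\lambda^{-1}$ encodes precisely the bookkeeping relating $\lambda$ to its dominant representative $\lambda_+$. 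I would first treat $\lambda \in P^+$, where the calculation is most transparent, and then bootstrap to general $\lambda \in P$ using that any weight can be reached from the dominant cone.

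\textbf{The generator identities} $\hat{T}_j \mathcal{J} = \mathcal{J} I_j$ for the finite Hecke algebra generators are where the genuine work lies, and this is the step I expect to be the main obstacle. The difficulty is that $\hat{T}_j$ and $I_j$ act by rather different-looking formulas---$\hat{T}_j = q_j + \chi_j(s_j-1)$ is a local difference-reflection operator built from the sign of $\langle\lambda,\alpha_j^\vee\rangle$, whereas $I_j = q_j s_j + (q_j-q_j^{-1})J_j$ involves the nonlocal discrete integral $J_j$ summing over an entire $\alpha_j$-string. The intertwiner $\mathcal{J}$ must reconcile these, and the verification will require evaluating both sides at an arbitrary $\lambda \in P$ and partitioning into cases according to the position of $\lambda$ relative to the wall $V_{\alpha_j,0}$ (i.e.\ the sign of $\langle\lambda,\alpha_j^\vee\rangle$), unwinding the definition $(\mathcal{J}f)(\lambda) = q_{\bar{u}_\lambda}(I_{\bar{u}_\lambda}^{-1} f, 1)$ and the duality from Remark~\ref{duality:rem}. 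The key technical inputs will be the behavior of lengths under multiplication by $s_j$ (whether $s_j$ increases or decreases the length of $\bar{u}_\lambda$, equivalently whether $s_j u_{w_o\lambda}$ stays reduced), which governs how $I_{\bar{u}_\lambda}^{-1}$ relates to $I_{\bar{u}_{s_j\lambda}}^{-1}$ and hence lets the telescoping sums defining $J_j$ match the local jump produced by $\chi_j(s_j-1)$.

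\textbf{Given the intricacy} of the case analysis, I anticipate deferring the detailed string-by-string comparison to an appendix (Appendix~\ref{appB}, as flagged in the introduction), and in the main text I would present the reduction to generators and carry out the cleaner translation computation, while organizing the generator verification around the length-theoretic dichotomy described above.
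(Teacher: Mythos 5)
Your reduction-to-generators step is sound, and your first family of identities, $\hat{T}_j\mathcal{J}=\mathcal{J}I_j$ for $j=1,\ldots,n$, is exactly the paper's Lemma \ref{finite-intertwining:lem}. (There, however, it is \emph{not} proved by the string-by-string comparison of the $J_j$ telescoping sums that you anticipate; it follows from a short computation with the operators $I_w^{-1}$ evaluated at dominant weights, using the stability property \eqref{stable} and the elementary Hecke relation \eqref{Hrelb} --- this is the easier half of the theorem and lives in the main text, not the appendix.) Before the substantive issue, note a literal error: your displayed identity $\hat{T}_{t_\lambda}\mathcal{J}=\mathcal{J}t_\lambda$ for all $\lambda\in P$ is false off the dominant cone, because $I(T_{t_\lambda})=t_\lambda$ only when $T_{t_\lambda}=Y^\lambda$, i.e. for $\lambda\in P^+$; for general $\lambda$ one has $I(T_{t_\lambda})\neq t_\lambda$ (e.g. for antidominant $\lambda$, since $T_{w^{-1}}\neq T_w^{-1}$), and the correct statement is $\hat{T}(Y^\lambda)\mathcal{J}=\mathcal{J}t_\lambda$ with $\hat{T}(Y^\lambda)=\hat{T}_{t_\mu}\hat{T}_{t_\nu}^{-1}$, $\lambda=\mu-\nu$. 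This slip is repairable by multiplicativity once the dominant case is established, but the dominant case is precisely where your plan has no content.

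The genuine gap is that you have inverted the distribution of difficulty and supplied no mechanism for the hard half. You call the translation identities ``the more tractable family'' on the grounds that $\mathcal{J}$ acts diagonally on $P^+$; but that only controls $(\mathcal{J}f)(\lambda)$ at dominant \emph{arguments}, whereas verifying $\hat{T}_{t_\lambda}\mathcal{J}=\mathcal{J}t_\lambda$ (even for dominant $\lambda$) requires evaluating at every $\mu\in P$ the composite operator $\hat{T}_{t_\lambda}$, a product of $\ell(t_\lambda)$ difference-reflection generators with no simple closed action --- indeed the paper needs all of Section \ref{sec7} just to compute the symmetrized combinations $\widehat{m_\omega(Y)}$ in closed form for (quasi-)minuscule $\omega$. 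This family carries the entire affine content of the theorem. The paper isolates that content instead in the single affine reflection $T_0$ and the length-zero elements $T_u$, $u\in\Omega$ (Lemma \ref{affine-intertwining:lem}), and resolving it is what occupies Appendix \ref{appB}: the (quasi-)minuscule-weight geometry of Lemma \ref{weights:lem}, Macdonald's relations $T_w^{-1}Y^{\omega}T_{v_\omega}^{-1}=Y^{w^{-1}\omega}T_{v_\omega w}^{-1}$ and $T_w^{-1}T_0^{\operatorname{sign}(w^{-1}\alpha_0)}=Y^{w^{-1}\alpha_0}T_{sw}^{-1}$ of Lemma \ref{hrel:lem}, the length identity \eqref{length-rel}, and a three-case analysis according to the position of $\mu_++w_\mu\alpha_0$ relative to $P^+$. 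Your BLZ-based reduction is viable in principle --- given the finite relations it is equivalent to checking $T_0$ and $T_u$, since $T_0=Y^{\alpha_0}T_{s_{\alpha_0,0}}^{-1}$ and $T_{u_\omega}=Y^{\omega}T_{v_\omega}^{-1}$ --- but you would then have to produce essentially this same case analysis for $Y^{\alpha_0}$ and the minuscule $Y^{\omega}$, and nothing in your sketch (which dismisses this step as bookkeeping of $q_{t_\lambda}$ prefactors and defers the \emph{finite} case to the appendix instead) supplies it.
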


For $w\in W_0$, the intertwining property in Eq. \eqref{intertwining} is an immediate consequence of the next lemma, whose proof boils down to some straightforward computations based on  the well-known elementary Hecke algebra relations (cf. e.g. \cite[(4.1.2)]{mac:affine})
\begin{subequations}
\begin{eqnarray}
T_j T_w \!\!\!\!  &= &\!\!\! \! T_{s_jw}+\chi (w^{-1}\alpha_j)(q_j-q_j^{-1})T_w, \label{Hrela} \\
 T_w^{-1} T_j^{-1} \!\!\!\!  &= & \!\!\!\! T_{s_jw}^{-1}-\chi (w^{-1}\alpha_j)(q_j-q_j^{-1})T_w^{-1}, \label{Hrelb}
\end{eqnarray}
\end{subequations}
$j=1,\ldots ,n$ (where $\chi$ is in accordance with Eq. \eqref{length} and the second relation follows from the first one by applying the anti-involution $T_w\to T_w^{-1}$, $q\to q^{-1}$) together with
the observation that for $w,w^\prime\in W_0$ and $\lambda$ dominant
\begin{equation}\label{stable}
q_w( I^{-1}_{w}f)(\lambda ) = q_{w^\prime}( I^{-1}_{w^\prime}f)(\lambda) \quad\text{if}\quad
w^{-1}w^\prime\in W_{0,\lambda}
\end{equation}
(which is readily seen by induction on $\ell (w^{-1}w^\prime)$).

\begin{lemma}\label{finite-intertwining:lem}
The representations $\hat{T}(\mathcal{H}_0)$ and $I(\mathcal{H}_0)$ satisfy the  finite intertwining relations
\begin{equation}
\hat{T}_j  \mathcal{J} = \mathcal{J} I_j \qquad (j=1,\ldots ,n).
\end{equation}
\end{lemma}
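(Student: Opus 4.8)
The plan is to evaluate both sides of the asserted identity at an arbitrary weight $\lambda\in P$ and an arbitrary $f\in C(P)$, using the reformulation of $\mathcal{J}$ displayed just before the statement: writing $(\mathcal{J}f)(\lambda)=q_{t_\lambda}\Phi(\lambda)$ with $\Phi(\lambda):=q_{w_\lambda}(I_{w_\lambda^{-1}}^{-1}f)(\lambda_+)$ and $\lambda_+:=w_\lambda\lambda$. Since $q_{t_{s_j\lambda}}=q_{t_\lambda}$ (by \eqref{length}) and $(s_j\lambda)_+=\lambda_+$ (the dominant representative being $s_j$-invariant), expanding $\hat{T}_j$ out of \eqref{Tj-operator}, \eqref{chi} gives
\[
(\hat{T}_j\mathcal{J}f)(\lambda)=q_{t_\lambda}\bigl(q_j\Phi(\lambda)+\chi_j(\lambda)(\Phi(s_j\lambda)-\Phi(\lambda))\bigr),
\]
so the whole matter reduces to rewriting $(\mathcal{J}I_jf)(\lambda)=q_{t_\lambda}q_{w_\lambda}(I_{w_\lambda^{-1}}^{-1}I_jf)(\lambda_+)$ as the very same linear combination of $\Phi(\lambda)$ and $\Phi(s_j\lambda)$.

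First I would transport the product $I_{w_\lambda^{-1}}^{-1}I_j$ back to the Hecke algebra. Using the quadratic relation \eqref{quadratic-rel} in the form $T_j=T_j^{-1}+(q_j-q_j^{-1})$ together with \eqref{Hrelb} applied at $w=w_\lambda^{-1}$ (so that $w^{-1}\alpha_j=w_\lambda\alpha_j$) yields
\[
T_{w_\lambda^{-1}}^{-1}T_j=T_{s_jw_\lambda^{-1}}^{-1}+(1-\chi(w_\lambda\alpha_j))(q_j-q_j^{-1})T_{w_\lambda^{-1}}^{-1}.
\]
Next I invoke the standard recursion for the minimal dominantizer $w_\lambda=v_{-\lambda}$: whenever $\langle\lambda,\alpha_j^\vee\rangle\neq 0$ one has $w_{s_j\lambda}=w_\lambda s_j$, hence $s_jw_\lambda^{-1}=w_{s_j\lambda}^{-1}$, with the length increasing by one (so $q_{w_{s_j\lambda}}=q_jq_{w_\lambda}$ and $w_\lambda\alpha_j>0$) when $\langle\lambda,\alpha_j^\vee\rangle>0$, and decreasing by one (so $q_{w_{s_j\lambda}}=q_j^{-1}q_{w_\lambda}$ and $w_\lambda\alpha_j<0$) when $\langle\lambda,\alpha_j^\vee\rangle<0$.

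The argument then splits along the sign of $\langle\lambda,\alpha_j^\vee\rangle$. For $\langle\lambda,\alpha_j^\vee\rangle>0$ the expansion carries $\chi(w_\lambda\alpha_j)=0$; applying $I(\cdot)$, evaluating at $\lambda_+$, and tracking the powers of $q_j$ through length-multiplicativity gives $(\mathcal{J}I_jf)(\lambda)=q_{t_\lambda}\bigl(q_j^{-1}\Phi(s_j\lambda)+(q_j-q_j^{-1})\Phi(\lambda)\bigr)$, matching the left-hand side with $\chi_j(\lambda)=q_j^{-1}$. For $\langle\lambda,\alpha_j^\vee\rangle<0$ one has $\chi(w_\lambda\alpha_j)=1$, the correction term drops, and the parallel computation produces $q_{t_\lambda}q_j\Phi(s_j\lambda)$, matching the left side with $\chi_j(\lambda)=q_j$. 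The remaining wall case $\langle\lambda,\alpha_j^\vee\rangle=0$ is where I expect the only genuine subtlety, and it is exactly where the stability property \eqref{stable} is needed: here $s_j\lambda=\lambda$ so the difference term in $\hat{T}_j$ vanishes and one must show $(\mathcal{J}I_jf)(\lambda)=q_jq_{t_\lambda}\Phi(\lambda)$ directly. Now $s_j\in W_{0,\lambda}$ and the conjugate reflection $w_\lambda s_jw_\lambda^{-1}$ fixes $\lambda_+$ as well (hence lies in $W_{0,\lambda_+}$), so \eqref{stable} equates $q_{w_\lambda s_j}(I_{(w_\lambda s_j)^{-1}}^{-1}f)(\lambda_+)$ with $q_{w_\lambda}(I_{w_\lambda^{-1}}^{-1}f)(\lambda_+)$; feeding this into the expansion collapses the coefficient $q_j^{-1}+(q_j-q_j^{-1})$ (respectively $q_j$, according to the sign of $w_\lambda\alpha_j$) to $q_j$, as required. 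The main obstacle throughout is thus the careful alignment of the length recursions for $w_\lambda$ under $s_j$, the sign of $w_\lambda\alpha_j$ entering \eqref{Hrelb}, and the invocation of \eqref{stable} on the wall; once these are synchronized, the verification is routine bookkeeping of powers of $q_j$.
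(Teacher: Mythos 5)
Your proof is correct and follows essentially the same route as the paper: both evaluate pointwise via $(\mathcal{J}f)(\lambda)=q_{t_\lambda}q_{w_\lambda}(I_{w_\lambda^{-1}}^{-1}f)(\lambda_+)$, use the Hecke relation \eqref{Hrelb} at $w=w_\lambda^{-1}$, the length recursion $w_{s_j\lambda}=w_\lambda s_j$, and the stability property \eqref{stable} on the wall (where, as you suspect, minimality of $w_\lambda$ in fact forces $w_\lambda\alpha_j\in R^+$, so only your first branch occurs). The only cosmetic difference is that the paper verifies the equivalent inverted identity $\hat{T}_j^{-1}\mathcal{J}=\mathcal{J}I_j^{-1}$ in a single uniform computation, with the sign of $w_\lambda\alpha_j$ and $\chi(w_\lambda\alpha_j)$ absorbing your three-way case split.
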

\begin{proof}
Let $f \in C(P)$.
Elementary manipulations reveal that
\begin{eqnarray*}
\lefteqn{q_{t_\lambda}^{-1} (\hat{T}_j^{-1}\mathcal{J}f)(\lambda )} && \\
\!\!\!\! &\stackrel{\eqref{stable}}{=}& \!\!\! \! q_j^{-1} q_{w_\lambda} (I_{w_\lambda^{-1}}^{-1} f)( \lambda_+ )
+q_j^{-\text{sign} (w_\lambda \alpha_j)}
\bigl(
q_{w_\lambda s_j} (I_{s_j w_\lambda^{-1}}^{-1} f)( \lambda_+ ) -
q_{w_\lambda} (I_{w_\lambda^{-1}}^{-1} f)( \lambda_+ )
\bigr)
\\
\!\!\!\! &=& \!\!\! \!
 q_{w_\lambda}
 \left( (I_{s_j w_\lambda^{-1}}^{-1} f)( \lambda_+ )
 -\chi (w_\lambda \alpha_j )(q_j-q_j^{-1})
 ( I^{-1}_{w_\lambda^{-1}} f)( \lambda_+ )  \right)  \\
\!\!\!\! &\stackrel{\eqref{Hrelb}}{=}& \!\!\! \!  q_{w_\lambda}  ( I^{-1}_{w_\lambda^{-1}} I^{-1}_j f)( \lambda_+ ) =
q_{t_\lambda}^{-1} (\mathcal{J} I^{-1}_j f)(\lambda ),
\end{eqnarray*}
whence $\hat{T}_j^{-1}  \mathcal{J} = \mathcal{J} I_j ^{-1}$.
\end{proof}

The extension of the intertwining property in Eq. \eqref{intertwining} from $ W_0$ to $ W$ hinges on a second lemma, whose proof in contrast is technically somewhat more involved and therefore being relegated to Appendix \ref{appB}.

\begin{lemma}\label{affine-intertwining:lem}
The representation $\hat{T}(\mathcal{H})$ and $I(\mathcal{H})$ satisfy the affine intertwining relations
\begin{subequations}
\begin{eqnarray}
 \hat{T}_0  \mathcal{J}\!\!\! &= &\!\!\!\mathcal{J} I_0 \qquad (I_0:=I_{s_0} ), \\
    u \mathcal{J}\!\!\! &=& \!\!\!\mathcal{J} I_u\qquad (u\in \Omega ) .
\end{eqnarray}
\end{subequations}
\end{lemma}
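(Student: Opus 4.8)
The plan is to deduce the affine relations from the finite intertwining of Lemma~\ref{finite-intertwining:lem} together with a \emph{representative independence} property of the defining formula \eqref{i-op}. The engine is the elementary observation that the integral-reflection operators act at the origin by a scalar: since $s_j$ fixes the origin and $(J_jf)(0)=0$ (because $\langle 0,\alpha_j^\vee\rangle=0$ in \eqref{int-op2}), one has $(I_jf)(0)=q_jf(0)$ and hence $(I_j^{-1}f)(0)=q_j^{-1}f(0)$ for all $f\in C(P)$ and $1\le j\le n$; this is precisely the instance $\mu=0$ of the stability property \eqref{stable}, for which $W_{0,0}=W_0$. A short induction on $\ell(v)$—using $T_{ws_j}=T_wT_{s_j}$ when $\ell(ws_j)=\ell(w)+1$ and reducing the opposite case to it—then yields $q_{wv}(I_{wv}^{-1}f)(0)=q_{w}(I_{w}^{-1}f)(0)$ for every $w\in W$ and $v\in W_0$. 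In other words $(\mathcal Jf)(\mu)=q_w(I_w^{-1}f,1)$ for \emph{any} representative $w$ of the coset $t_\mu W_0$, not only for the distinguished $\bar u_\mu$.

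Granting this, the $\Omega$-relations follow at once. For $u\in\Omega$ we have $\ell(u)=0$, so $T_{\bar u_\lambda}^{-1}T_u=T_{u^{-1}\bar u_\lambda}^{-1}$ and $q_{u^{-1}\bar u_\lambda}=q_{\bar u_\lambda}$; applying $I$ gives $(\mathcal JI_uf)(\lambda)=q_{\bar u_\lambda}(I_{u^{-1}\bar u_\lambda}^{-1}f,1)$. As $\bar u_\lambda$ maps $0$ to $\lambda$, the element $u^{-1}\bar u_\lambda$ maps $0$ to $u^{-1}\lambda$ and hence represents $t_{u^{-1}\lambda}W_0$; representative independence then identifies the right-hand side with $(\mathcal Jf)(u^{-1}\lambda)=(u\mathcal Jf)(\lambda)$, which is exactly $u\mathcal J=\mathcal JI_u$.

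The affine generator is the delicate case, and for it I would mimic the proof of Lemma~\ref{finite-intertwining:lem} with $j=0$, establishing the equivalent relation $\hat T_0^{-1}\mathcal J=\mathcal JI_0^{-1}$. Writing $\hat T_0^{-1}=q_0^{-1}+\chi_0(s_0-1)$ and evaluating $(\mathcal Jf)(s_0\lambda)$ through the representative $s_0\bar u_\lambda$ of $t_{s_0\lambda}W_0$, I would expand $I_{s_0\bar u_\lambda}^{-1}$ by the $j=0$ analogue of \eqref{Hrelb} (with $w=\bar u_\lambda$) and read off $q_{s_0\bar u_\lambda}=q_0^{\pm1}q_{\bar u_\lambda}$ from the sign of the affine root $\bar u_\lambda^{-1}\alpha_0$. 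Substituting into $\hat T_0^{-1}\mathcal Jf=\mathcal JI_0^{-1}f$ and collecting the coefficients of $(I_{\bar u_\lambda}^{-1}I_0^{-1}f,1)$ and $(I_{\bar u_\lambda}^{-1}f,1)$, the two cases of \eqref{Hrelb} make the identity collapse to the single requirement that $\chi_0(\lambda)=q_0^{-1}$ when $\bar u_\lambda^{-1}\alpha_0$ is a positive affine root and $\chi_0(\lambda)=q_0$ when it is negative.

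This matching is the crux. By \eqref{chi}, $\chi_0(\lambda)$ is governed entirely by $\langle\lambda,\alpha_0^\vee\rangle$: it equals $q_0^{-1}$, $1$, or $q_0$ according as $\langle\lambda,\alpha_0^\vee\rangle$ is $<1$, $=1$, or $>1$, i.e. according to which side of $V_0$ the weight $\lambda$ lies relative to $A$. On the other hand the sign of $\bar u_\lambda^{-1}\alpha_0$ records whether left multiplication by $s_0$ raises or lowers $\ell(\bar u_\lambda)$; since $\bar u_\lambda(0)=\lambda$, a computation with the length formula \eqref{length} shows that $\bar u_\lambda^{-1}\alpha_0<0$ exactly when $\langle\lambda,\alpha_0^\vee\rangle>1$ and $\bar u_\lambda^{-1}\alpha_0>0$ exactly when $\langle\lambda,\alpha_0^\vee\rangle<1$, matching $\chi_0$ as required. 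The one configuration excluded by this dichotomy is $\lambda\in V_0$, where $\chi_0(\lambda)=1$ and $s_0\lambda=\lambda$, so the reflection term drops and the claim reduces to the scalar identity $q_0^{-1}(\mathcal Jf)(\lambda)=(\mathcal JI_0^{-1}f)(\lambda)$, which I would check directly via representative independence (and, if convenient, the duality of Remark~\ref{duality:rem}). With $\hat T_0\mathcal J=\mathcal JI_0$ and the $\Omega$-relations in hand, the intertwining holds on all generators $s_0,\dots,s_n$ and $\Omega$, hence on all of $\mathcal H$, which is Lemma~\ref{affine-intertwining:lem}. I expect the length bookkeeping in this last matching—tracking $\ell(s_0\bar u_\lambda)$ against the position of $\lambda$—to be the main technical obstacle.
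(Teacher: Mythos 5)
Your proposal is correct, but it takes a genuinely different route from the paper's Appendix~\ref{appB}. The paper never isolates your representative-independence property: it evaluates both sides at $w_o\mu$, proves the $\Omega$-relation through minuscule-weight geometry (Lemma~\ref{weights:lem}(i)) combined with Macdonald's identity $T_w^{-1}Y^\omega T_{v_\omega}^{-1}=Y^{w^{-1}\omega}T^{-1}_{v_\omega w}$ (Lemma~\ref{hrel:lem}(i)), and proves the $\hat{T}_0$-relation by a three-case analysis governed by the quasi-minuscule properties of $\alpha_0$ (Lemma~\ref{weights:lem}(iia),(iib))---the paper's Case (A), where $w_o\mu\in V_0$, being your $\lambda\in V_0$ configuration---using Macdonald's relation $T_w^{-1}T_0^{\text{sign}(w^{-1}\alpha_0)}=Y^{w^{-1}\alpha_0}T_{sw}^{-1}$ together with the separate length identity \eqref{length-rel}. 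You instead upgrade the stability \eqref{stable} to full coset-independence of the defining formula \eqref{i-op}: your induction via $(I_j^{\mp 1}f)(0)=q_j^{\mp 1}f(0)$ is sound and rests only on Proposition~\ref{int-ref-rep:prp}, so $(\mathcal{J}f)(\mu)=q_w(I_w^{-1}f,1)$ for any $w\in t_\mu W_0$; this makes the $\Omega$-relation a one-liner (via $T_{\bar{u}_\lambda}^{-1}T_u=T_{u^{-1}\bar{u}_\lambda}^{-1}$ and $q_{u^{-1}\bar{u}_\lambda}=q_{\bar{u}_\lambda}$) and lets you treat $s_0$ exactly as the finite generators in Lemma~\ref{finite-intertwining:lem}. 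The crux you identify is genuine and your dichotomy is true, with one notational repair: the paper has no affine-root formalism, so ``$\bar{u}_\lambda^{-1}\alpha_0>0$'' should be read as the length criterion $\ell(s_0\bar{u}_\lambda)=\ell(\bar{u}_\lambda)+1$, equivalently $V_0\notin S(\bar{u}_\lambda)$. The matching then holds because $\alpha_0^\vee$ is the highest coroot, so $|\langle x,\beta^\vee\rangle|<1$ for all $x\in A$ and $\beta\in R$; hence $\langle \bar{u}_\lambda x,\alpha_0^\vee\rangle$ stays within distance $1$ of $\langle\lambda,\alpha_0^\vee\rangle$, and $V_0$ separates $A$ from $\bar{u}_\lambda A$ precisely when $\langle\lambda,\alpha_0^\vee\rangle\geq 2$, matching $\chi_0(\lambda)=q_0$ there and $\chi_0(\lambda)=q_0^{-1}$ when $\langle\lambda,\alpha_0^\vee\rangle\leq 0$. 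When $\lambda\in V_0$ both length cases can occur, but---as you anticipate---coset-independence applied to the second representative $s_0\bar{u}_\lambda\in t_\lambda W_0$ gives $q_0^{-1}(\mathcal{J}f)(\lambda)=(\mathcal{J}I_0^{-1}f)(\lambda)$ in either case, so no further geometric input is needed. Comparing what each approach buys: the paper's computation stays within machinery it reuses elsewhere (explicit shortest coset representatives and minuscule/quasi-minuscule combinatorics that reappear in Section~\ref{sec7}), whereas yours is more uniform and self-contained, putting $s_0$ on the same footing as $s_1,\ldots,s_n$, dispensing with Lemmas~\ref{weights:lem} and~\ref{hrel:lem} altogether, and compressing the case analysis into a single separation statement about $V_0$ and the alcove $\bar{u}_\lambda A$.
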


\begin{remark}
By the duality in Remark \ref{duality:rem}, the action of the intertwining operator  can be rewritten in terms of
the polynomial representation $\check{T}(\mathcal{H})$ as
\begin{equation}
(\mathcal{J}f)(\lambda ) = q_{\bar{u}_{\lambda}} ( f, (\check{T}^{\star}_{\bar{u}_{\lambda} })^{-1} 1) \qquad (f\in C(P),\lambda\in P) ,
\end{equation}
where we have used the short-hand notation $\check{T}_w^\star :=\check{T}(T_w^\star)$.
\end{remark}

\begin{remark}\label{intertwining:rem}
The intertwining operator $\mathcal{J}$ \eqref{i-op} is a discrete counterpart
of the Gutkin-Sutherland propagation operator, which relates the spectral
problem for the Laplacian with a delta potential in Remark \ref{int-refl:rem} to that of the free Laplacian
\cite{gut-sut:completely,gut:integrable}.
From this perspective, Theorem \ref{intertwining:thm} yields the corresponding generalization of the fact that the propagation operator
in question intertwines
the integral-reflection representation and the Dunkl-type differential-reflection representation of the Drinfeld-Lusztig graded affine Hecke algebra of Refs. \cite{hec-opd:yang} and \cite{ems-opd-sto:periodic}, respectively. In fact, our difference-reflection
representation $\hat{T}(\mathcal{H})$, which was obtained by pushing the integral-reflection representation
$I(\mathcal{H})$ through the intertwining operator $\mathcal{J}$,
provides us with the Dunkl-type difference-reflection operators
for a discretization of the Laplacian with a delta potential on root hyperplanes  that was
introduced and studied in Ref. \cite{die:plancherel} (cf. also Remark \ref{discrete-laplacian:rem} below).
\end{remark}

\vspace{1ex}

\subsection{Bijectivity of the intertwining operator}
We will now show
that the intertwining operator $\mathcal{J}:C(P)\to C(P)$ is bijective.
The existence of this bijection intertwining the
difference-reflection representation $\hat{T}(\mathcal{H})$ and the integral-reflection representation
$I(\mathcal{H})$ reveals that these two representations of the affine Hecke algebra in $C(P)$ are in fact equivalent.
Moreover, it provides an alternative (indirect) proof of Theorem \ref{dif-ref-rep:thm} as a consequence
of Proposition \ref{int-ref-rep:prp}. Indeed, Lemmas \ref{finite-intertwining:lem}  and
\ref{affine-intertwining:lem}--- together with the bijectivity of $\mathcal{J}$---disclose
that the affine Hecke-algebra relations in Eqs. \eqref{qr}--\eqref{cr} may be seen as a consequence of
the corresponding relations  for $I_0,\ldots,I_n$ and $I_u$, $u\in\Omega$ (which follow in turn from
Proposition \ref{int-ref-rep:prp}).

To prove now the bijectivity in question some further notation is needed.
Let $\preceq$ represent the {\em dominance order} on the cone of dominant weights $P^+$ and
let $\leqslant$ denote the {\em Bruhat order} on the finite Weyl group $W_0$ \cite{bou:groupes,mac:affine}. Specifically,
$$\forall \lambda,\mu\in P^+:\quad  \mu \preceq \lambda \quad \text{iff} \quad \lambda-\mu\in Q^+$$ with
$Q^+:=\text{Span}_{\mathbb{Z}_{\geq 0}}(R^+)$,
and $\forall v,v^\prime \in W_0$: $v^\prime\leqslant v$ iff
$v^\prime=s_{i_1}\cdots s_{i_{p}}$ for a certain subsequence $(i_1,\ldots ,i_{p})$ of
$(j_1,\ldots ,j_\ell)$ with
$v=s_{j_1}\cdots s_{j_\ell}$ a reduced expression (i.e. $\ell=\ell (v)$).
The dominance order can be conveniently extended from $P^+$ to $P$ with the aid of the Bruhat order
(cf. Ref. \cite[Sec. 2.1]{mac:affine})
$$\forall \lambda ,\mu\in P:\quad  \mu\preceq\lambda\quad \text{iff}\quad \begin{cases} \mu_+\prec\lambda_+& (i),\\ \text{or} & \\
\mu_+=\lambda_+\ \text{and}\ w_\mu \leqslant w_\lambda & (ii).\end{cases}$$

\begin{theorem}[Automorphism]\label{bijectivity:thm}
The operator $\mathcal{J}$ \eqref{i-op} constitutes a linear automorphism of the space $C(P)$.
\end{theorem}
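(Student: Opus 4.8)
The plan is to prove bijectivity by showing that $\mathcal{J}$ is \emph{triangular} with respect to the extended dominance order $\preceq$ on $P$ recalled just above, with a nonvanishing diagonal, and then to invert it by a locally finite recursion. Since $\mathcal{J}$ is visibly linear, everything reduces to the following triangularity statement: for every $\lambda\in P$ one has
\begin{equation*}
(\mathcal{J}f)(\lambda)=q_{t_\lambda}\,f(\lambda)+\sum_{\mu\prec\lambda}c_{\lambda,\mu}\,f(\mu)\qquad(f\in C(P)),
\end{equation*}
where the sum is finite and the diagonal coefficient $q_{t_\lambda}\in\mathbb{R}\setminus\{0\}$. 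On the dominant cone this is immediate from $(\mathcal{J}f)(\lambda)=q_{t_\lambda}f(\lambda)$ ($\lambda\in P^+$); the content is the off-diagonal support and the survival of the diagonal for non-dominant $\lambda$.

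To establish this I would unfold $(\mathcal{J}f)(\lambda)=q_{t_\lambda}q_{w_\lambda}(I_{w_\lambda^{-1}}^{-1}f)(\lambda_+)$ and expand $I_{w_\lambda^{-1}}^{-1}=I_{i_p}^{-1}\cdots I_{i_1}^{-1}$ along a reduced word $w_\lambda^{-1}=s_{i_1}\cdots s_{i_p}$, applying one factor at a time. The quadratic relation gives $I_i^{-1}=q_is_i+(q_i-q_i^{-1})(J_i-1)$, so each factor sends an evaluation at a weight $\nu$ to a combination of evaluations over the $\alpha_i$-string joining $\nu$ and $s_i\nu$: the reflection part produces $f(s_i\nu)$, while $J_i$ produces the interior points of the string. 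Composing the reflection parts along the word yields the leading contribution $q_{w_\lambda}\,f(w_\lambda^{-1}\lambda_+)=q_{w_\lambda}f(\lambda)$, while the string interiors generate the subleading weights. A short bookkeeping of the string sums (already transparent in rank one, where the $f(\lambda)$-terms of the reflection and integral parts combine to $q_{w_\lambda}^{-1}$) shows that the net coefficient of $f(\lambda)$ in $(I_{w_\lambda^{-1}}^{-1}f)(\lambda_+)$ equals $q_{w_\lambda}^{-1}$, so that the diagonal of $\mathcal{J}$ is $q_{t_\lambda}q_{w_\lambda}\cdot q_{w_\lambda}^{-1}=q_{t_\lambda}\neq 0$.

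The decisive point is that every off-diagonal weight $\mu$ satisfies $\mu\prec\lambda$ in the \emph{extended} order. An interior string point $\mu$ either leaves the Weyl orbit of $\lambda_+$, in which case $\mu_+\prec\lambda_+$ and $\mu\prec\lambda$ by clause $(i)$; or it remains in $W_0\lambda_+$, in which case one must verify clause $(ii)$, namely $w_\mu\leqslant w_\lambda$ in the Bruhat order. This is precisely where the Bruhat refinement of the dominance order is needed: as $\lambda_+$ is transported along the partial products of the reduced word $s_{i_1}\cdots s_{i_p}$ of $w_\lambda^{-1}$, the elements $w_\mu$ attached to the weights encountered run through subwords of this reduced expression, whence $w_\mu\leqslant w_\lambda$ by the subword characterization of the Bruhat order (the stabilizer-invariance \eqref{stable} guaranteeing that these $w_\mu$ are unambiguous). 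Reconciling the geometry of the $\alpha_i$-strings with clause $(ii)$ is the step I expect to be the main obstacle, since it demands a careful match between the combinatorics of the integral operators and the Bruhat order along the whole word.

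Granting triangularity, bijectivity is formal. The order $\preceq$ is locally finite: the dominance interval $\{\nu\in P^+:\nu\preceq\lambda_+\}$ is finite and each dominant weight has a finite Weyl orbit, so $\{\mu\in P:\mu\preceq\lambda\}$ is finite; in particular $\preceq$ is well-founded. Injectivity then follows by induction along $\preceq$: if $\mathcal{J}f=0$ and $f(\mu)=0$ for all $\mu\prec\lambda$, then $q_{t_\lambda}f(\lambda)=0$ forces $f(\lambda)=0$. Surjectivity follows by the same recursion: given $g\in C(P)$, the prescription
\begin{equation*}
f(\lambda):=q_{t_\lambda}^{-1}\Bigl(g(\lambda)-\sum_{\mu\prec\lambda}c_{\lambda,\mu}\,f(\mu)\Bigr)
\end{equation*}
is well defined by local finiteness and yields $\mathcal{J}f=g$. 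Hence $\mathcal{J}$ is a linear automorphism of $C(P)$.
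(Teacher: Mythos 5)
Your proposal is correct and takes essentially the same approach as the paper: both reduce bijectivity to the triangularity $(\mathcal{J}f)(\lambda)=q_{t_\lambda}f(\lambda)+\sum_{\mu\prec\lambda}*\,f(\mu)$ with respect to the extended dominance order (the paper's Eq.~\eqref{tria}, which combines saturation of the sets $P(\lambda)$ for clause $(i)$ with a Bruhat comparison for clause $(ii)$), and then invert by the same well-founded recursion along $\preceq$. The only difference is organizational: where you peel the reduced word of $w_\lambda^{-1}$ from the outside and invoke the subword characterization of the Bruhat order, the paper runs an induction on $\ell(w_\lambda)$ appending one factor $I_j^{-1}$ at a time via the one-step formula \eqref{Ij} and the lifting estimate $w_{s_j\nu}\leqslant w_\nu s_j < w_\lambda s_j=w_{s_j\lambda}$ --- precisely the inductive packaging of the clause-$(ii)$ step you flag as the main obstacle.
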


\begin{corollary}[Equivalence]
The difference-reflection representation $\hat{T}(\mathcal{H})$
and the integral-reflection representation $I(\mathcal{H})$ of the affine Hecke algebra
in $C(P)$ are equivalent:
\begin{equation*}
\hat{T}(h)= \mathcal{J} I(h) \mathcal{J}^{-1}\qquad \forall h\in \mathcal{H}.
\end{equation*}
\end{corollary}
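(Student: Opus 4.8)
The plan is to show that $\mathcal{J}$ is lower-triangular with respect to the extended dominance order $\preceq$ on $P$ recalled just above the statement, with a nonvanishing scalar diagonal, and then to invert it by a routine well-founded recursion. Writing $(\mathcal{J}f)(\lambda)=\sum_{\mu\in P}M_{\lambda\mu}f(\mu)$, I would establish that for each fixed $\lambda$ this is a \emph{finite} sum and that
\begin{equation*}
M_{\lambda\mu}=0\ \text{unless}\ \mu\preceq\lambda,\qquad M_{\lambda\lambda}=q_{t_\lambda}\neq 0 .
\end{equation*}
The nonvanishing of the diagonal is immediate (it is a product of nonzero values of $q$), so the whole content lies in the triangularity together with the row-finiteness.

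Before proving triangularity I would record the order-theoretic input that drives the inversion: for every $\lambda\in P$ the down-set $\{\mu\in P\mid\mu\preceq\lambda\}$ is \emph{finite}. Indeed, $\mu\preceq\lambda$ forces $\mu_+\preceq\lambda_+$, and the set of dominant weights $\preceq\lambda_+$ is finite since $\lambda_+-\mu_+\in Q^+$ is bounded; for each admissible value of $\mu_+$ the weight $\mu=w_\mu^{-1}\mu_+$ is then pinned down by the finitely many choices of $w_\mu\in W_0$. Consequently $\preceq$ is well-founded. This simultaneously guarantees that the matrix $(M_{\lambda\mu})$ is row-finite (so that $\mathcal{J}$ is genuinely defined on all of $C(P)$) and licenses induction along $\preceq$.

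For triangularity I would induct on $\ell(w_\lambda)$. The base case $w_\lambda=e$ is the observation that $(\mathcal{J}f)(\lambda)=q_{t_\lambda}f(\lambda)$ on the dominant cone. For the inductive step, pick a simple index $j$ with $\langle\lambda,\alpha_j^\vee\rangle<0$ and put $\lambda':=s_j\lambda$; then $\lambda'_+=\lambda_+$, one has $w_\lambda=w_{\lambda'}s_j$ with $\ell(w_{\lambda'})=\ell(w_\lambda)-1$ (because $w_\lambda\alpha_j\in R^-$), and correspondingly $I_{w_\lambda^{-1}}^{-1}=I_{w_{\lambda'}^{-1}}^{-1}I_j^{-1}$. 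Substituting this factorization into $(\mathcal{J}f)(\lambda)=q_{t_\lambda}q_{w_\lambda}(I_{w_\lambda^{-1}}^{-1}f)(\lambda_+)$ and comparing with the same formula at $\lambda'$ gives the reduction
\begin{equation*}
(\mathcal{J}f)(\lambda)=q_{t_\lambda}q_{t_{\lambda'}}^{-1}q_j\,\bigl(\mathcal{J}(I_j^{-1}f)\bigr)(\lambda') ,
\end{equation*}
to which the induction hypothesis applies since $\ell(w_{\lambda'})<\ell(w_\lambda)$. It then remains to unravel the single operator $I_j^{-1}=q_js_j+(q_j-q_j^{-1})(J_j-1)$ (from the quadratic relation of Lemma \ref{finite-rep:lem}) and to re-express the reflected and $\alpha_j$-string values it produces in terms of $f$. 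The diagonal bookkeeping is clean: the coefficient of $f(\lambda)$ in $(I_j^{-1}f)(\lambda')$ is $q_j-(q_j-q_j^{-1})=q_j^{-1}$, which combines with the prefactors above and the inductive diagonal $q_{t_{\lambda'}}$ to reproduce exactly $M_{\lambda\lambda}=q_{t_\lambda}$.

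The main obstacle will be the off-diagonal estimate: one must check that every remaining contribution $f(\mu)$ created by $I_j^{-1}$ — the intermediate $\alpha_j$-string values and the non-leading reflected value — is sent, after being fed through the inductive expansion at $\lambda'$, to a weight $\mu$ strictly below $\lambda$ in the combined order $\preceq$, and in particular never returns to $\lambda$ itself. This is precisely where the two-tier nature of $\preceq$ is essential: some terms lower the dominant representative ($\mu_+\prec\lambda_+$), since the integral operators $J_j$ subtract positive multiples of $\alpha_j\in R^+$ along the string, while the subtler terms preserve $\mu_+=\lambda_+$ and must instead be shown to strictly decrease the Bruhat length, i.e.\ $w_\mu<w_\lambda$. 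Controlling these two mechanisms at once is the technical heart of the argument. Once it is in place, a standard triangular solve finishes the proof: injectivity follows because $\mathcal{J}f=0$ gives $q_{t_\lambda}f(\lambda)=-\sum_{\mu\prec\lambda}M_{\lambda\mu}f(\mu)$, forcing $f\equiv0$ by well-founded induction, and surjectivity follows because the recursion $f(\lambda)=q_{t_\lambda}^{-1}\bigl(g(\lambda)-\sum_{\mu\prec\lambda}M_{\lambda\mu}f(\mu)\bigr)$ solves $\mathcal{J}f=g$ for any prescribed $g\in C(P)$. Hence $\mathcal{J}$ is a linear automorphism of $C(P)$.
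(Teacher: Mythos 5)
Your skeleton coincides with the paper's own proof of Theorem \ref{bijectivity:thm} (from which the Corollary follows): the paper establishes exactly your triangularity, in the form of Eq.~\eqref{tria}, $(I_{w_\lambda^{-1}}^{-1}f)(\lambda_+)=q_{w_\lambda}^{-1}f(\lambda)+\sum_{\mu\prec\lambda}*f(\mu)$, equivalently $(\mathcal{J}f)(\lambda)=q_{t_\lambda}f(\lambda)+\sum_{\mu\prec\lambda}*f(\mu)$, by the same induction on $\ell(w_\lambda)$ with the same factorization $I_{w_\lambda^{-1}}^{-1}=I_{w_{\lambda'}^{-1}}^{-1}I_j^{-1}$, and then inverts by the same well-founded recursion along $\preceq$. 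Your reduction formula and the diagonal bookkeeping are correct: the coefficient of $f(\lambda)$ in $(I_j^{-1}f)(\lambda')$ is indeed $q_j-(q_j-q_j^{-1})=q_j^{-1}$, and your explicit finiteness of the down-sets of $\preceq$ is a correct (and slightly more explicit) substitute for the paper's implicit row-finiteness, which comes for free since $I_{w_\lambda^{-1}}^{-1}$ is a finite composition of operators each producing finitely many terms.

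However, what you label ``the technical heart'' is genuinely missing, and it is precisely where the paper's proof does its work. Two facts are asserted but not proved. First, the qualitative description of how $I_j^{-1}$ moves weights relative to $\preceq$ --- the paper's Eq.~\eqref{Ij} --- which rests on the saturation properties of the sets $P(\lambda)=\{\mu\mid\mu_+\preceq\lambda_+\}$ (closure under $\alpha$-strings $[\mu,s_\alpha\mu]$). Your stated reason, that $J_j$ ``subtracts positive multiples of $\alpha_j\in R^+$,'' is not by itself valid for the order at hand: subtracting $\alpha_j$ lowers $\mu$ in the naive dominance order on $V$ but can \emph{raise} $\mu_+$ (e.g.\ when $\langle\mu,\alpha_j^\vee\rangle\le 0$); it is only along the string from $\mu$ to $s_j\mu$, via the saturated-set/norm argument, that interior points satisfy $\kappa_+\prec\mu_+$. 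Second, and more seriously, after invoking the inductive hypothesis at $\lambda'$ you must expand $(I_j^{-1}f)(\mu)$ for the unknown lower weights $\mu\prec\lambda'$ and show every resulting term stays strictly below $\lambda=s_j\lambda'$ (in particular never recreates $f(\lambda)$, which would spoil your computation of $M_{\lambda\lambda}$). This is the propagation statement that $\mu\prec\lambda'\prec s_j\lambda'$ implies $s_j\mu\prec s_j\lambda'$; it is immediate in the tier-(i) case $\mu_+\prec\lambda'_+$, but the tier-(ii) case $\mu_+=\lambda'_+$ requires the Bruhat-order estimate $w_{s_j\mu}\leqslant w_\mu s_j< w_{\lambda'}s_j=w_{s_j\lambda'}$, which is exactly the argument closing Step $(iii)$ of Eq.~\eqref{Iw+1} in the paper. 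Without these two ingredients the induction does not close, so as written the proposal is an outline rather than a proof. A minor further omission: even granting bijectivity, the displayed identity $\hat{T}(h)=\mathcal{J}I(h)\mathcal{J}^{-1}$ still requires citing the intertwining relations of Theorem \ref{intertwining:thm}; your write-up ends with the automorphism claim and leaves this (one-line) deduction implicit.
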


\begin{proof}
It is clear that
the intertwining property in Theorem \ref{intertwining:thm}  and the invertibility of $\mathcal{J}$ ensure that
$\hat{T}(\mathcal{H})$ and $I(\mathcal{H})$ are equivalent representations of the affine Hecke algebra
in $C(P)$, i.e.  the corollary is in effect a direct consequence of the theorem.
The proof of the theorem---which amounts to showing that the linear operator $\mathcal{J}:C(P)\to C(P)$ is
bijective---is in turn immediate from the following triangularity property:
\begin{equation}\label{tria}
(I_{w_\lambda^{-1}}^{-1}f)(\lambda_+)=q^{-1}_{w_\lambda}f(\lambda )+\sum_{\mu\in P,\, \mu \prec \lambda} * f(\mu )\qquad (f\in C(P),\, \lambda\in P).
\end{equation}
Here and below the star symbols
$*$ refer to the expansion coefficients of lower terms (with respect to the partial order $\preceq$)
whose precise values are not relevant for the argument of the proof. Indeed, it is clear from the triangularity in Eq. \eqref{tria} that for any $g\in C(P)$
the linear equation $(\mathcal{J}f)(\lambda )=g(\lambda )$ ($\lambda\in P$) can be uniquely solved
inductively in $\lambda$ with respect to the partial order $\preceq$.

The triangularity in Eq. \eqref{tria} hinges on well-known saturation properties of the convex hull of the orbit of a weight with respect to the action of the finite Weyl group \cite{bou:groupes,mac:affine}.
For our purposes it is enough to recall that for any $\lambda\in P$ the weights in the convex hull of $W_0\lambda$ are given by the saturated set $P(\lambda) :=\{\mu\in P \mid  \mu_+\preceq \lambda_+\}$.
For $\lambda,\mu\in P$ one has that: (i) if $\mu \preceq \lambda$ then $\mu\in P(\lambda)$, and (ii)
if $\mu\in P(\lambda)$ then $[\mu, s_\alpha\mu] \subset P(\lambda)$ for any $\alpha\in R$, where  $s_\alpha:=s_{\alpha ,0}$ and
$[\mu ,s_{\alpha }\mu ]$ refers to the $\alpha$-string from $\mu$ to $s_\alpha\mu$, i.e.
$[\mu ,s_{\alpha }\mu ]:=\{ \mu-k\alpha\mid  k=0,\ldots ,\langle \mu,\alpha^\vee\rangle\}$.

After these preliminaries we are now in a position to
prove the triangularity in question by induction on $\ell (w_\lambda)$ starting from the straightforward case
that $\ell (w_\lambda)\leq 1$. (The case $\ell (w_\lambda)=0$ is in fact trivial since then $\lambda\in P^+$ and $(I_{w_\lambda^{-1}}^{-1}f)(\lambda_+)=f(\lambda) $).)
It is manifest from the explicit formula for the
action of $I_j$ (cf. Eqs. \eqref{int-op1}, \eqref{int-op2}) and the above properties of the saturated set $P(\lambda)$
that for $j=1,\ldots ,n$:
\begin{equation}\label{Ij}
(I_{j}^{-1}f)(\lambda ) =
\begin{cases}\displaystyle
q_j^{-1} f(s_j \lambda)+\sum_{\mu\in P,\, \mu \prec s_j \lambda} *f(\mu)
&\text{if}\ s_j \lambda \succ \lambda ,\\
\displaystyle\qquad\qquad\quad\
\sum_{\mu\in P,\, \mu \preceq s_j\lambda } *f(\mu )
&\text{if}\ s_j\lambda \preceq \lambda ,
\end{cases}
\end{equation}
which implies Eq. \eqref{tria} for $\lambda\in P$ such that $\ell (w_\lambda)=1$.
Upon picking $\lambda\in P$ such that the triangularity in Eq. \eqref{tria} holds for all $\mu\in P$ with
$\ell (w_\mu)\leq \ell (w_\lambda )$, it is readily seen that for any
$j\in \{1,\ldots ,n\}$ such that $\lambda \prec s_j\lambda$ (or equivalently
$\ell (w_{s_j\lambda})=\ell (w_\lambda )+1$) one has that
\begin{align}\label{Iw+1}
(I_{w_{s_j\lambda}^{-1}}^{-1}f)((s_j\lambda)_+)&\stackrel{(i)}{=}(I_{w_\lambda^{-1}}^{-1}I_{j}^{-1}f)(\lambda_+) ,\\
&\stackrel{(ii)}{=}
q_{w_\lambda}^{-1} (I_{j}^{-1} f)(\lambda)+\sum_{\nu \in P,\, \nu \prec \lambda} * (I_{j}^{-1}f)(\nu )  , \nonumber \\
&\stackrel{(iii)}{=}
q_{w_{s_j\lambda}}^{-1} f(s_j\lambda)+\sum_{\mu \in P,\, \mu \prec s_j\lambda} * f (\mu )
\nonumber
\end{align}
(thus completing the induction). Here Step $(i)$ of the derivation exploits that $w_{s_j\lambda}=w_\lambda s_j$ (with $\ell (w_\lambda s_j)=\ell (w_\lambda )+1$) and Step $(ii)$ relies on invoking of the induction hypothesis that
the triangularity holds for $w_\lambda$.
Step $(iii)$ follows in turn upon applying Eq. \eqref{Ij} to all terms  on the second line of Eq. \eqref{Iw+1}.
Indeed, if $\nu\prec\lambda\prec s_j\lambda$ then $s_j\nu\prec s_j\lambda $ (which is immediate from the definitions if $\nu_+ \prec \lambda_+$ and which follows from the elementary estimates
$w_{s_j\nu}  \leqslant w_\nu s_j  < w_\lambda s_j=w_{s_j\lambda}$ if $\nu_+=\lambda_+$ and
$w_\nu  < w_\lambda$).
\end{proof}

\subsection{Macdonald spherical functions}
Let $\xi\in V$. By $e^{i\xi}\in C(P)$ we denote the plane wave
 $e^{i\xi}(\lambda):=e^{i\langle \lambda ,\xi\rangle}=(e^{i\xi},e^{-\lambda}  )=(e^\lambda,e^{i\xi})$, $\lambda\in P$.
 By definition, the {\em Macdonald spherical function} $\Phi_\xi$, $\xi \in V$ is the function in $C(P)$
of the form
\begin{subequations}
\begin{equation}\label{mac-spha}
\Phi_\xi :=\mathcal{J}\phi_\xi\quad\text{with}\quad \phi_\xi :=  I(\mathbf{1}_0)e^{i\xi} ,
\end{equation}
where
\begin{equation}\label{mac-sphb}
\mathbf{1}_0:=\sum_{w\in W_0} q_w T_w .
\end{equation}
\end{subequations}
The Macdonald spherical function is $W_0$-invariant in the sense that
\begin{align*}
\Phi_\xi\in C(P)^{W_0}&:=\{ f\in C(P)\mid wf=f,\ w\in W_0\} \\
&=\{ f\in C(P)\mid \hat{T}_wf=q_w f,\ w\in W_0\} .
\end{align*}
 Indeed, since
$T_j\mathbf{1}_0=q_j\mathbf{1}_0$ for $1\leq j\leq n$ in view of Eq. \eqref{Hrela}, it is clear that
$\hat{T}_j\Phi_\xi= \hat{T}_j \mathcal{J} \phi_\xi= \mathcal{J} I_j \phi_\xi =  \mathcal{J} q_j \phi_\xi=q_j\Phi_\xi$, $j=1,\ldots, n$.

The symmetric monomials $m_\lambda:=\sum_{\mu\in W_0\lambda} e^\mu$, $\lambda\in P^+$ form a basis
of $\mathbb{C}[P]^{W_0}$.
For $p=\sum_{\lambda\in P^+} c_\lambda m_\lambda \in\mathbb{C}[P]^{W_0}$ ($c_\lambda\in\mathbb{C}$),
we define
$\widehat{p(Y)}:=\hat{T}(p(Y))$ where $p(Y):=\sum_{\lambda\in P^+} c_\lambda m_\lambda(Y)$.
The center of the affine Hecke algebra is then given by
$\mathcal{Z}=\{ p(Y) \mid p\in\mathbb{C}[P]^{W_0} \}$ and moreover
$\hat{T}(\mathcal{Z})=\{ \widehat{p(Y)} \mid p\in\mathbb{C}[P]^{W_0} \} $. Clearly the space $C(P)^{W_0}$ is stable under the action of $\hat{T}(\mathcal{Z})$.

\begin{theorem}[Diagonalization]\label{diagonal:thm}
The commuting subalgebra $\hat{T}(\mathcal{Z})\subset\hat{T}(\mathcal{H})$ is diagonalized by the Macdonald spherical function:
\begin{equation}
\widehat{p(Y)}\Phi_\xi = E_p(\xi) \Phi_\xi \quad\text{with}\quad E_p(\xi )=(p,e^{-i\xi}),
\end{equation}
for $\xi\in V$ and $p\in\mathbb{C}[P]^{W_0} $.
\end{theorem}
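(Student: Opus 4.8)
The plan is to reduce the statement to the definition $\Phi_\xi=\mathcal{J}\phi_\xi$ (cf. Eqs. \eqref{mac-spha}, \eqref{mac-sphb}) and to transport the central element through the intertwining operator $\mathcal{J}$. First I would note that the intertwining property in Theorem \ref{intertwining:thm}, being linear in $h$, extends from the generators $T_w$ to the identity $\hat{T}(h)\mathcal{J}=\mathcal{J}I(h)$ for every $h\in\mathcal{H}$. Applying this with $h=p(Y)\in\mathcal{Z}$ gives
\begin{equation*}
\widehat{p(Y)}\,\Phi_\xi=\hat{T}(p(Y))\mathcal{J}\phi_\xi=\mathcal{J}\,I(p(Y))\phi_\xi ,
\end{equation*}
so that everything hinges on evaluating $I(p(Y))\phi_\xi$ inside the integral-reflection representation.

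Next I would exploit two facts. Since $p(Y)$ lies in the center $\mathcal{Z}$, it commutes with $\mathbf{1}_0\in\mathcal{H}_0$, whence $I(p(Y))I(\mathbf{1}_0)=I(\mathbf{1}_0)I(p(Y))$ and therefore $I(p(Y))\phi_\xi=I(\mathbf{1}_0)\,I(p(Y))e^{i\xi}$. It thus suffices to compute the action of $I(p(Y))$ on the plane wave $e^{i\xi}$. By Proposition \ref{int-ref-rep:prp} one has $I(Y^\mu)=t_\mu$, and a direct computation using Eq. \eqref{W-action} gives $(t_\mu e^{i\xi})(\lambda)=e^{i\xi}(\lambda-\mu)=e^{-i\langle\mu,\xi\rangle}e^{i\xi}(\lambda)$; that is, the plane waves are joint eigenfunctions of the translation operators. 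Writing $p=\sum_\mu a_\mu e^\mu\in\mathbb{C}[P]^{W_0}$ and using $I(p(Y))=\sum_\mu a_\mu t_\mu$, this yields
\begin{equation*}
I(p(Y))e^{i\xi}=\Bigl(\sum_\mu a_\mu e^{-i\langle\mu,\xi\rangle}\Bigr)e^{i\xi}=E_p(\xi)\,e^{i\xi},
\end{equation*}
where the scalar is readily identified with the pairing $(p,e^{-i\xi})$ from Eq. \eqref{pairing}.

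To finish I would feed this eigenvalue back: $I(p(Y))\phi_\xi=E_p(\xi)\,I(\mathbf{1}_0)e^{i\xi}=E_p(\xi)\phi_\xi$, and hence $\widehat{p(Y)}\Phi_\xi=\mathcal{J}E_p(\xi)\phi_\xi=E_p(\xi)\Phi_\xi$ by linearity of $\mathcal{J}$. Conceptually there is no serious obstacle: the diagonalization is essentially forced once the difference-reflection center has been transported, via $\mathcal{J}$, into the translation part $I(Y^\lambda)=t_\lambda$ of the integral-reflection representation, on which the plane waves are manifestly diagonal. The only points requiring care are the bookkeeping of the sesquilinear pairing conventions needed to confirm that the eigenvalue equals $(p,e^{-i\xi})$ rather than a conjugate or sign-flipped variant, and the invocation of centrality to slide $I(p(Y))$ past $I(\mathbf{1}_0)$; both are routine given the machinery assembled in Sections \ref{sec4} and \ref{sec5}.
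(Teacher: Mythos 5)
Your proof is correct, and it departs from the paper's argument at the decisive step. Both proofs begin the same way: the intertwining relations of Theorem \ref{intertwining:thm} extend by linearity from the basis $T_w$ to $\hat{T}(h)\mathcal{J}=\mathcal{J}I(h)$ for all $h\in\mathcal{H}$, reducing everything to the eigenvalue equation $I(p(Y))\phi_\xi=E_p(\xi)\phi_\xi$ in the integral-reflection representation. At that point the paper invokes Proposition \ref{mac-for:prp} (resting on Macdonald's formula $\mathbf{1}_0Y^{-\lambda}\mathbf{1}_0=P_\lambda(Y)\mathbf{1}_0$) to decompose $\phi_\xi$ as a linear combination of the plane waves $e^{iw\xi}$, $w\in W_0$, each of which is an eigenfunction of $I(p(Y))=\sum_\mu a_\mu t_\mu$ with the common eigenvalue $(p,e^{-i\xi})$ thanks to the $W_0$-invariance of $p$. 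You instead exploit that $p(Y)\in\mathcal{Z}$ commutes with $\mathbf{1}_0$, slide $I(p(Y))$ past $I(\mathbf{1}_0)$ using that $I$ is an algebra homomorphism (Proposition \ref{int-ref-rep:prp}), and evaluate the eigenvalue on the single unsymmetrized plane wave $e^{i\xi}$ --- here the $W_0$-invariance of $p$ enters only implicitly, through the centrality of $p(Y)$. Your route is more elementary and self-contained: it bypasses Macdonald's explicit formula entirely, and it is uniformly valid for every $\xi\in V$, whereas the paper's plane-wave decomposition with coefficients $\prod_{\alpha\in R^+}\bigl(1-q_\alpha^2e^{-i\langle w\xi,\alpha\rangle}\bigr)/\bigl(1-e^{-i\langle w\xi,\alpha\rangle}\bigr)$ is literally available only for $\xi$ off the hyperplanes $\langle\xi,\alpha\rangle\in 2\pi\mathbb{Z}$, the degenerate cases being covered by continuity (or by working with the polynomial $P_\lambda$ itself, in which the apparent poles cancel). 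What the paper's detour buys is the explicit expansion of $\phi_\xi$, hence the closed formula for $\Phi_\xi$ recorded in Remark \ref{Macsph:rem}, which is needed anyway for the Fourier analysis of Section \ref{sec6} and the Pieri formulas of Section \ref{sec7}; so within the paper's economy Proposition \ref{mac-for:prp} carries no extra cost, but for the diagonalization statement taken in isolation your centrality argument is the leaner proof.
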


The proof of this theorem hinges on the intertwining operator and an explicit formula for
$\phi_\xi = I(\mathbf{1}_0)e^{i\xi}$ following from the work of Macdonald \cite{mac:spherical1,mac:spherical2}.

\begin{proposition}\label{mac-for:prp}
The function $\phi_\xi$ \eqref{mac-spha}, \eqref{mac-sphb} is given explicitly by
\begin{subequations}
\begin{equation}
\phi_\xi (\lambda ) = ( e^{i\xi}, P_{\lambda}) , \qquad \lambda\in P ,
\end{equation}
where
\begin{equation}\label{mac-form}
P_\lambda :=  \sum_{w\in W_0} e^{-w\lambda }
 \prod_{\alpha\in R^+} \frac{1-q_\alpha^2 e^{w\alpha }}{1-e^{w\alpha}}
\end{equation}
\end{subequations}
and $q_{\alpha}:=q_{\alpha ,0}$.
\end{proposition}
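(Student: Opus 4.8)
The plan is to transport the computation from the integral-reflection side to the polynomial side by means of the duality of Remark~\ref{duality:rem}, thereby reducing the statement to the classical closed formula for the Hecke-algebra symmetrizer acting on $\mathbb{C}[P]$. First I would note that the pairing \eqref{pairing} satisfies $g(\lambda)=(g,e^{-\lambda})$ for every $g\in C(P)$, so that $\phi_\xi(\lambda)=(I(\mathbf{1}_0)e^{i\xi},e^{-\lambda})$. Invoking the duality $(I(h)f,p)=(f,\check{T}(h^\star)p)$ with $h=\mathbf{1}_0$, $f=e^{i\xi}$ and $p=e^{-\lambda}$ converts this into $\phi_\xi(\lambda)=(e^{i\xi},\check{T}(\mathbf{1}_0^\star)e^{-\lambda})$. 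Since $\star$ restricts to the $*$-involution $T_w\mapsto T_{w^{-1}}$ on $\mathcal{H}_0$ and $q_{w^{-1}}=q_w$, a reindexing gives $\mathbf{1}_0^\star=\sum_{w\in W_0}q_wT_{w^{-1}}=\mathbf{1}_0$. It therefore suffices to establish the algebraic identity
\[
\check{T}(\mathbf{1}_0)\,e^{-\lambda}=P_\lambda\qquad(\lambda\in P),
\]
which I would prove by comparing the symmetrizer $\mathcal{C}:=\check{T}(\mathbf{1}_0)=\sum_{w\in W_0}q_w\check{T}_w$ with the explicit expression \eqref{mac-form}.

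The heart of the argument is an operator identity for $\mathcal{C}$ over the fraction field of $\mathbb{C}[P]$. Writing each Demazure--Lusztig operator \eqref{demazure} as $\check{T}_j=b_js_j+a_j$ with $a_j=(q_j-q_j^{-1})(1-e^{-\alpha_j})^{-1}$ and $b_j=(q_j^{-1}-q_je^{-\alpha_j})(1-e^{-\alpha_j})^{-1}$, the crucial coincidence is $q_j-a_j=b_j$. Expanding $\mathcal{C}=\sum_{w\in W_0}d_w\,w$ with rational multiplication coefficients $d_w$ and feeding in the relation $\check{T}_j\mathcal{C}=q_j\mathcal{C}$ (the image under $\check{T}$ of $T_j\mathbf{1}_0=q_j\mathbf{1}_0$, cf.\ \eqref{Hrela}), the coefficient of a fixed group element $w$ yields $b_js_j(d_{s_jw})+a_jd_w=q_jd_w$; the coincidence $q_j-a_j=b_j$ collapses this to $d_{s_jw}=s_j(d_w)$, whence $d_w=w(d_1)$ for all $w\in W_0$ and a single seed $d_1$. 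Applying $\mathcal{C}=\sum_{w\in W_0}w(d_1)\,w$ to $e^{-\lambda}$ gives $\mathcal{C}e^{-\lambda}=\sum_{w\in W_0}w\bigl(d_1\,e^{-\lambda}\bigr)$, which already has the shape of \eqref{mac-form}.

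It then remains to identify the seed $d_1$, which I would pin down from the top of the Bruhat order. Because $\check{T}_w$ is supported on group elements $\leqslant w$, only $q_{w_o}\check{T}_{w_o}$ contributes to the longest element $w_o$, and the leading coefficient of $\check{T}_{w_o}=\check{T}_{j_1}\cdots\check{T}_{j_N}$ (a reduced word, $N=|R^+|$) equals $\prod_{i}s_{j_1}\cdots s_{j_{i-1}}(b_{j_i})$. Using that the roots $s_{j_1}\cdots s_{j_{i-1}}\alpha_{j_i}$ enumerate $R^+$ exactly once and that $q_{w_o}=\prod_{\alpha\in R^+}q_\alpha$, this product evaluates to $d_{w_o}=\prod_{\alpha\in R^+}(1-q_\alpha^2e^{-\alpha})(1-e^{-\alpha})^{-1}$. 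Inverting $d_{w_o}=w_o(d_1)$ and using that $-w_o$ permutes $R^+$ then yields $d_1=\prod_{\alpha\in R^+}(1-q_\alpha^2e^{\alpha})(1-e^{\alpha})^{-1}$, so that $d_1\,e^{-\lambda}$ is exactly the summand in \eqref{mac-form} and $\mathcal{C}e^{-\lambda}=P_\lambda$.

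I expect the main obstacle to lie in this last identification of $d_1$: verifying that the reflected simple roots run through $R^+$ without repetition and correctly tracking the short/long multiplicities $q_\alpha$ under the $W_0$-action. I would also record at the outset that both $P_\lambda$ and $\mathcal{C}e^{-\lambda}$ lie in $\mathbb{C}[P]$, the apparent denominators cancelling after symmetrization, so that all manipulations over the fraction field return genuine lattice polynomials. A shorter but less self-contained alternative would be to quote Macdonald's formula \cite{mac:spherical1,mac:spherical2} for $\check{T}(\mathbf{1}_0)e^{-\lambda}$ outright.
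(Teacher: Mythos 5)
Your proof is correct, and after the opening reduction it takes a genuinely different route from the paper's. The duality step is identical: the paper also writes $\phi_\xi(\lambda)=(I(\mathbf{1}_0)e^{i\xi},e^{-\lambda})=(e^{i\xi},\check{T}(\mathbf{1}_0)e^{-\lambda})$ using Remark \ref{duality:rem}. But at that point the paper simply quotes Macdonald's formula $\mathbf{1}_0Y^{-\lambda}\mathbf{1}_0=P_\lambda(Y)\mathbf{1}_0$ from \cite{mac:spherical1,mac:spherical2} to conclude $\check{T}(\mathbf{1}_0)e^{-\lambda}=P_\lambda$ --- that is, the ``shorter but less self-contained alternative'' you mention in your last sentence is precisely the published proof. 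Your middle portion instead re-derives this identity from first principles, and the derivation is sound: the coincidence $q_j-a_j=b_j$ does collapse the coefficient equation from $\check{T}_j\mathcal{C}=q_j\mathcal{C}$ to $d_{s_jw}=s_j(d_w)$, hence $d_w=w(d_1)$; the top Bruhat coefficient is correctly computed, since for a reduced word for $w_o$ the reflected simple roots $s_{j_1}\cdots s_{j_{i-1}}\alpha_{j_i}$ enumerate $R^+$ exactly once (cf.\ the fact used in the proof of Lemma \ref{tlem1}, \cite[(2.2.9)]{mac:affine}), $q_\alpha$ is constant on $W_0$-orbits, and $q_{w_o}=\prod_{\alpha\in R^+}q_\alpha$ by Eq.\ \eqref{reconstruct}, giving $d_{w_o}=\prod_{\alpha\in R^+}(1-q_\alpha^2e^{-\alpha})(1-e^{-\alpha})^{-1}$ and, via $w_o^{-1}=w_o$ and $-w_o(R^+)=R^+$, the stated $d_1$. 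What your route buys is self-containedness: the proposition no longer rests on an external citation, and your argument is in essence the classical proof of Macdonald's formula itself; what the paper's route buys is brevity and proper attribution. Two small points you should make explicit: (1) expanding $\mathcal{C}=\sum_{w\in W_0}d_ww$ and equating coefficients presupposes that the elements of $W_0$, viewed as operators on the fraction field of $\mathbb{C}[P]$, are linearly independent over that field (Artin's independence of distinct automorphisms), and the cancellation step also uses $b_j\neq 0$ there; (2) since $\star$ is an \emph{antilinear} anti-involution, $\mathbf{1}_0^\star=\sum_{w\in W_0}\bar{q}_wT_{w^{-1}}$, so besides $q_{w^{-1}}=q_w$ you need that $q$ is real-valued, which holds by the standing assumption $q:W\to\mathbb{R}\setminus\{0\}$ of Section \ref{sec2}. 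Both gaps are routine to fill.
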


\begin{proof}
Let us recall from Remark \ref{duality:rem} that $( I(h)f,p)=(f,\check{T}({h^*})p)$ for $h\in\mathcal{H}_0$,  $f\in C(P)$,
$p\in\mathbb{C}[P]$, where $\check{T}(\mathcal{H}_0)$ refers to the standard polynomial representation of the finite Hecke algebra generated by the Demazure-Lusztig operators in Eq.  \eqref{demazure}.  The lemma is now an immediate consequence of Macdonald's celebrated formula $\mathbf{1}_0Y^{-\lambda} \mathbf{1}_0=P_\lambda (Y)\mathbf{1}_0$ ($\lambda\in P$) \cite[Thm. 1]{mac:spherical1} and (with more details)
\cite[(4.1.2)]{mac:spherical2} (see also e.g. \cite[Thm. 2.9(a)]{nel-ram:kostka} and \cite[Thm. 6.9]{par:buildings}). Indeed,  Macdonald's formula implies that $\check{T}(\mathbf{1}_0)e^{-\lambda}=P_\lambda$, whence
$\phi_\xi (\lambda)=( \phi_\xi , e^{-\lambda} ) = ( I(\mathbf{1}_0)e^{i\xi},e^{-\lambda})=(e^{i\xi}, \check{T}(\mathbf{1}_0)e^{-\lambda})=( e^{i\xi}, P_{\lambda})$.
\end{proof}

Proposition \ref{mac-for:prp} reveals that $\phi_\xi$ decomposes as a linear combination of plane waves $e^{iw\xi}$, $w\in W_0$ (with coefficients $\prod_{\alpha\in R^+} \frac{1-q_\alpha^2 e^{-i\langle w\xi ,\alpha\rangle }}{1-e^{-i\langle w\xi ,\alpha\rangle}} $).
With this information the proof of Theorem \ref{diagonal:thm} reduces to an elementary computation:
\begin{equation*}
\widehat{p(Y)}\Phi_\xi = \widehat{p(Y)}\mathcal{J} \phi_\xi =\mathcal{J} I (p(Y)) \phi_\xi=
\mathcal{J} p \phi_\xi = \mathcal{J} (p,e^{-i\xi}) \phi_\xi= (p,e^{-i\xi}) \Phi_\xi ,
\end{equation*}
where we have used that
$pe^{iw\xi}=(p,e^{-i\xi})e^{iw\xi} $ for $w\in W_0$, since $p\in\mathbb{C}[P]^{W_0}$
and
$e^\lambda e^{i\xi}=t_\lambda e^{i\xi}=e^{-i\langle\lambda ,\xi\rangle}e^{i\xi}=(e^\lambda,e^{-i\xi})e^{i\xi}$.

\begin{remark}\label{Macsph:rem}
It is immediate from Proposition \ref{mac-for:prp} and the $W_0$-invariance of the Macdonald spherical function
$\Phi_\xi$ that
\begin{equation}
\Phi_\xi (\lambda )=
q_{t_{\lambda}}  \sum_{w\in W_0} e^{i\langle{w\xi ,\lambda_+ \rangle} }
 \prod_{\alpha\in R^+} \frac{1-q_\alpha^2 e^{-i\langle w\xi , \alpha\rangle }}{1-e^{-i\langle w\xi , \alpha\rangle}} ,
 \qquad \lambda\in P .
\end{equation}
\end{remark}

\section{Unitarity}\label{sec6}
In this section we describe a Hilbert space structure for which our difference-reflection representation becomes unitary.

Here it is always assumed that $q:W\to (0,1)$.
We will employ the shorthand notation $X(q^2):=\sum_{w\in X} q_w^2$ for $X\subset W_0$. So in particular, $W_0(q^2)$  and $W_{0,x}(q^2)$ ($x\in V$) represent the (generalized) Poincar\'e series of $W_0$ and $W_{0,x}$ associated with $q^2$, respectively.
Let $l^2(P,\delta )$ be the Hilbert space of functions $\{ f\in C(P)\mid \langle f,f\rangle_\delta <\infty \}$, where\begin{subequations}
\begin{equation}
\langle f ,g\rangle_\delta :=\sum_{\lambda\in P}  f(\lambda) \overline{g(\lambda)} \delta_\lambda \qquad
(f,g\in l^2(P,\delta )),
\end{equation}
with
\begin{equation}\label{delta}
\delta_\lambda := \mathcal{N}_0^{-1}\, q^{-2}_{u_\lambda} =
 \mathcal{N}_0^{-1} \prod_{\substack{\alpha\in R^+,k\in\mathbb{Z}\\ V_{\alpha ,k}\in S(\lambda)}}   q^{-2}_{\alpha ,k} ,
 \qquad \mathcal{N}_0:= W_0(q^2)
\end{equation}
\end{subequations}
and $S(\lambda):=S(u_\lambda )=\{ V_{\alpha ,k}\mid V_{\alpha ,k}\; \text{separates}\; \lambda\;\text{and}\; A\}$ (cf. \cite[(2.4.8)]{mac:affine}).

\begin{theorem}[Unitarity of $\hat{T}(\mathcal{H})$]\label{unitary1:thm}
The difference-reflection representation $h\to\hat{T}(h)$ ($h\in\mathcal{H}$) on $C(P)$ restricts to a unitary representation of the affine Hecke algebra
into the space of bounded operators on $l^2(P,\delta )$, i.e.
\begin{equation}
\langle \hat{T}(h)f,g\rangle_\delta =
\langle f,\hat{T}(h^*)g\rangle_\delta \qquad (h\in\mathcal{H},\ f,g\in l^2(P,\delta )).
\end{equation}
\end{theorem}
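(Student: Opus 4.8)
The plan is to verify the $*$-compatibility $\langle \hat{T}(h)f,g\rangle_\delta=\langle f,\hat{T}(h^*)g\rangle_\delta$ on a generating set of $\mathcal{H}$ and then propagate it algebraically. First I would record that each generator acts as a \emph{bounded} operator on $l^2(P,\delta)$: the multiplication operators $q_j,\chi_j$ in \eqref{Tj-operator}, \eqref{chi} are bounded, and the permutation operators $s_j$ and $u$ ($u\in\Omega$) are bounded because the weight ratios $\delta_{s_j\lambda}/\delta_\lambda$ and $\delta_{u\lambda}/\delta_\lambda$ take only finitely many values. Consequently every $\hat{T}(h)$ possesses a Hilbert-space adjoint $\hat{T}(h)^\dagger$, and both assignments $h\mapsto\hat{T}(h)^\dagger$ and $h\mapsto\hat{T}(h^*)$ are antilinear anti-homomorphisms of $\mathcal{H}$ (the first because $\dagger$ reverses products and the representation is a homomorphism, the second because $*$ is an anti-involution). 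It therefore suffices to check that they agree on the generators $T_0,\dots,T_n$ and $T_u$ ($u\in\Omega$) of the presentation recalled in Section \ref{sec2}; agreement then extends to arbitrary products and, by linearity, to all of $\mathcal{H}$.

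For the generators $T_u$ one has $\hat{T}(T_u)=u$ and $T_u^*=T_{u^{-1}}$, so the required identity is $\langle uf,g\rangle_\delta=\langle f,u^{-1}g\rangle_\delta$, i.e. that $u$ acts unitarily. After the substitution $\lambda=u\mu$ this reduces to the $\Omega$-invariance $\delta_{u\lambda}=\delta_\lambda$, which follows from $uA=A$: a hyperplane $V$ separates $u\lambda$ and $A=uA$ precisely when $u^{-1}V$ separates $\lambda$ and $A$, so $S(u\lambda)=uS(\lambda)$, and since the multiplicity function $q$ is $W$-invariant the product in \eqref{delta} is unchanged. For the generators $T_j$ ($j=0,\dots,n$) one has $T_j^*=T_{s_j}=T_j$, so the claim is that $\hat{T}_j$ is self-adjoint. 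Here I would exploit that $\hat{T}_j$ only couples each weight $\lambda$ with its mirror image $s_j\lambda$: on the fixed hyperplane $V_j$ it acts as the real scalar $q_j$ and is trivially self-adjoint, while on each reflection pair $\{\lambda,s_j\lambda\}$ with $\lambda$ on the far side of $V_j$ it acts through a $2\times2$ block. Expanding $\langle\hat{T}_jf,g\rangle_\delta$ and $\langle f,\hat{T}_jg\rangle_\delta$ over such a pair and cancelling the common diagonal term, self-adjointness is seen to be equivalent to the single weight-balance identity
\begin{equation*}
\delta_{s_j\lambda}=q_j^2\,\delta_\lambda\qquad\text{whenever } V_j \text{ separates } \lambda \text{ and } A.
\end{equation*}

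The crux of the argument — and the step I expect to require the most care — is this weight-balance identity, which I would establish by comparing the separating sets $S(\lambda)$ and $S(s_j\lambda)$ directly. The key geometric input is that the alcoves $A$ and $s_jA$ are adjacent across the single wall $V_j$, so every hyperplane $V\neq V_j$ has $A$ and $s_jA$ on the same side. Hence for $V\neq V_j$ one has $V\in S(\lambda)$ iff $V$ separates $\lambda$ and $s_jA$, which upon applying the isometry $s_j$ (sending $s_jA\mapsto A$) is equivalent to $s_jV\in S(s_j\lambda)$. Since by hypothesis $V_j$ separates $\lambda$ and $A$, one checks moreover that $V_j\in S(\lambda)$ while $V_j\notin S(s_j\lambda)$. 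This yields $S(\lambda)=\{V_j\}\sqcup\{V\in S(\lambda):V\neq V_j\}$ together with a $q$-weight-preserving bijection $V\mapsto s_jV$ of the second piece onto $S(s_j\lambda)$, using once more the $W$-invariance of $q$ and $q_{V_j}=q_j$. Reading off the product in \eqref{delta} gives $q_{u_\lambda}^2=q_j^2\,q_{u_{s_j\lambda}}^2$, which is precisely $\delta_{s_j\lambda}=q_j^2\delta_\lambda$. This completes the verification on all generators and hence establishes the unitarity.
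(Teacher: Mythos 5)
Your proposal is correct and follows essentially the same route as the paper: reduce to the generators $T_0,\dots,T_n$ and $T_u$ ($u\in\Omega$), and verify self-adjointness of $\hat{T}_j$ and unitarity of $u$ via the quasi-invariance of the measure, $\delta_{s_j\lambda}=\chi_j^2(\lambda)\,\delta_\lambda$ and $\delta_{u\lambda}=\delta_\lambda$, obtained by comparing the separating sets $S(\lambda)$, $S(s_j\lambda)$, $S(u\lambda)$ (the paper phrases the $\hat{T}_j$ step as a change of variables $\lambda\to s_j\lambda$ in the sum rather than via your $2\times 2$ blocks, which is a cosmetic difference). If anything, your statement $S(s_j\lambda)=s_j\bigl(S(\lambda)\setminus\{V_j\}\bigr)$ with the $q$-preserving bijection $V\mapsto s_jV$ is slightly more precise than the paper's parenthetical $S(s_j\lambda)=S(\lambda)\setminus\{V_j\}$, which holds only up to this bijection but yields the same product of multiplicities by $W$-invariance of $q$.
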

\begin{proof}
Let $f,g\in l^2(P,\delta )$. It suffices to show that the actions of $\hat{T}_j$ ($0\leq j\leq n$) and  $u$ ($u\in\Omega$) determine bounded operators on $l^2(P,\delta )$ satisfying
(i) $\langle \hat{T}_j f,g\rangle_\delta=\langle f, \hat{T}_j g\rangle_\delta$ and  (ii) $\langle u f,g\rangle_\delta=\langle f, u^{-1} g\rangle_\delta$.
Property (ii)  follows by performing the change of coordinates $\lambda\to u\lambda$ to the (discrete) integral $\langle u f,g\rangle_\delta$. Indeed, invoking of the symmetry $\delta_{u\lambda}=\delta_\lambda$ (as
$S(u\lambda )=S(\lambda )$) then produces the integral $\langle  f,u^{-1}g\rangle_\delta$.
Property (i) follows in turn by performing the change of coordinates $\lambda\to s_j\lambda$ to the integral $\langle \chi_j s_j f,g\rangle_\delta$, which entails the integral  $\langle f,  \chi_j s_j g\rangle_\delta$.
Here one uses the symmetries
$s_j\chi_j=\chi_j^{-1}s_j$ and
$\delta_{s_j\lambda}=\chi_j^2(\lambda )\delta_\lambda$ (as
 $S(s_j\lambda )=S(\lambda )\setminus \{ V_j\}$ if $V_j\in S(\lambda )$,
$S(s_j\lambda )=S(\lambda)$ if $\lambda\in V_j$, and
$S(s_j\lambda )=S(\lambda )\cup \{ V_j\}$  otherwise). The computations in question also reveal that the actions of
$u$  and $s_j$ (and thus that of $\hat{T}_j$) are indeed bounded in  $l^2(P,\delta )$
(as $\langle uf ,uf\rangle_\delta =\langle f ,f\rangle_\delta$
and $\langle s_jf ,s_j f\rangle_\delta =\langle\chi_js_j f ,\chi_j^{-1}s_jf\rangle_\delta=\langle f ,\chi_j s_j\chi_j^{-1}s_jf\rangle_\delta=\langle f ,\chi_j^{2}f\rangle_\delta$, and
$\chi_j$ is a bounded function on $P$).
\end{proof}
Since $P^+$ is a fundamental domain for the action of $W_0$ on $P$, the symmetric subspace
$l^2(P,\delta )^{W_0}:=l^2(P,\delta )\cap C(P)^{W_0}$ can be identified with the Hilbert space $l^2(P^+,\Delta )$ of
functions $\{ f:P^+\to \mathbb{C} \mid \ \langle f,f\rangle_\Delta <\infty \}$, where
\begin{subequations}
\begin{equation}
\langle f ,g\rangle_\Delta :=\sum_{\lambda\in P^+}  f(\lambda) \overline{g(\lambda)} \Delta_\lambda
\qquad (f,g\in l^2(P^+,\Delta )) ,
\end{equation}
with
\begin{equation}\label{mac-mes}
\Delta_\lambda :=  \sum_{\mu\in W_0\lambda} \delta_\mu =
q^{-2}_{t_\lambda} \frac{W^\lambda_0(q^2)}{W_0(q^2)}=  \frac{q^{-2}_{t_\lambda}}{W_{0,\lambda}(q^2)}
 \qquad (\lambda\in P^+)
\end{equation}
\end{subequations}
and $W_0^\lambda := \{ w_\mu \mid \mu \in W_0\lambda \}$.
The first equality in Eq. \eqref{mac-mes}
follows from the relations
$q_{u_\mu}=q_{t_\mu}q^{-1}_{v_\mu}=q_{t_{\mu_+}}q^{-1}_{v_\mu}$ and $q_{v_\mu}=q_{w_{w_o\mu}} $ ($\mu \in P$); the second equality is readily inferred upon observing that the mapping $(w,w^\prime)\to ww^\prime$ determines a bijection of
$ W_{0,\lambda}\times W_0^\lambda$ onto $W_0$ satisfying $\ell (ww^\prime)=\ell (w)+\ell (w^\prime )$,
whence $q_{ww^\prime}=q_wq_{w^\prime}$ and thus $W_0(q^2)=W_{0,\lambda}(q^2)W_0^\lambda (q^2)$.

The following adjointess relations for the basis elements $\widehat{m_\lambda(Y)}$ are an immediate consequence of the unitarity in Theorem \ref{unitary1:thm} (recall in this connection also the last paragraph of Section \ref{sec2}).
\begin{corollary}[Adjointness Relations in $\hat{T}(\mathcal{Z})$]\label{sadjoint:cor}
The basis operators $ \widehat{m_\lambda (Y)}$, $\lambda\in P^+$ spanning $\hat{T}(\mathcal{Z})$ satisfy the adjointness relations
\begin{equation}
\langle \widehat{m_\lambda (Y)}f,g\rangle_\Delta =
\langle f, \widehat{m_{\lambda^*} (Y)}g\rangle_\Delta \qquad (\lambda\in P^+,\  f,g\in l^2 (P^+,\Delta )) .
\end{equation}
\end{corollary}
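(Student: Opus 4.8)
The plan is to deduce the $\Delta$-adjointness on the dominant cone from the $\delta$-unitarity on the full weight lattice already furnished by Theorem~\ref{unitary1:thm}, using as a bridge the identification of $l^2(P^+,\Delta)$ with the symmetric subspace $l^2(P,\delta)^{W_0}$ that precedes the statement of the corollary. The two ingredients I would isolate are: that this identification is an isometry, and that $\hat{T}(\mathcal{Z})$ leaves the symmetric sector invariant (the latter already observed in the text).

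First I would make the identification explicit. Given $f\in l^2(P^+,\Delta)$, write $\tilde{f}\in C(P)^{W_0}$ for its unique $W_0$-invariant extension to $P$ (determined by $\tilde{f}(\mu)=f(\lambda)$ whenever $\mu\in W_0\lambda$ with $\lambda\in P^+$). Summing over $W_0$-orbits and using the $W_0$-invariance of $\tilde f,\tilde g$ together with the defining relation $\Delta_\lambda=\sum_{\mu\in W_0\lambda}\delta_\mu$ in Eq.~\eqref{mac-mes}, one obtains
\begin{equation*}
\langle \tilde{f},\tilde{g}\rangle_\delta = \sum_{\lambda\in P^+}\sum_{\mu\in W_0\lambda} f(\lambda)\overline{g(\lambda)}\,\delta_\mu = \sum_{\lambda\in P^+} f(\lambda)\overline{g(\lambda)}\,\Delta_\lambda = \langle f,g\rangle_\Delta .
\end{equation*}
Hence $f\mapsto\tilde{f}$ is a unitary isomorphism of $l^2(P^+,\Delta)$ onto $l^2(P,\delta)^{W_0}$.

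Next I would exploit that $\hat{T}(\mathcal{Z})$ preserves $C(P)^{W_0}$, so that $\widehat{m_\lambda(Y)}\tilde{f}$ and $\widehat{m_{\lambda^*}(Y)}\tilde{g}$ are again $W_0$-invariant; under the isometry above they correspond to the action of these operators on $l^2(P^+,\Delta)$. It then remains to apply Theorem~\ref{unitary1:thm} with $h=m_\lambda(Y)\in\mathcal{Z}$, invoking the relation $m_\lambda(Y)^*=m_{\lambda^*}(Y)$ established at the close of Section~\ref{sec2}, which gives
\begin{equation*}
\langle \widehat{m_\lambda(Y)}\tilde{f},\tilde{g}\rangle_\delta = \langle \tilde{f},\widehat{m_{\lambda^*}(Y)}\tilde{g}\rangle_\delta .
\end{equation*}
Transporting both inner products back through the isometry $f\mapsto\tilde f$ yields exactly the asserted identity $\langle \widehat{m_\lambda(Y)}f,g\rangle_\Delta=\langle f,\widehat{m_{\lambda^*}(Y)}g\rangle_\Delta$.

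Since every nontrivial analytic point (boundedness, the behaviour of the $*$-anti-involution, and the spectral bookkeeping) has already been settled in Theorem~\ref{unitary1:thm}, I do not expect a genuine obstacle here; the only step deserving care is the orbit-sum computation showing that the extension map is isometric and that the $W_0$-invariant sector is indeed the correct copy of $l^2(P^+,\Delta)$ inside $l^2(P,\delta)$. This is precisely why the statement can legitimately be billed as an \emph{immediate} consequence of the unitarity theorem.
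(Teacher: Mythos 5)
Your proposal is correct and follows exactly the route the paper intends: the paper declares the corollary an immediate consequence of Theorem~\ref{unitary1:thm} together with the relation $m_\lambda(Y)^*=m_{\lambda^*}(Y)$ from the end of Section~\ref{sec2}, mediated by the identification $l^2(P^+,\Delta)\cong l^2(P,\delta)^{W_0}$ via $\Delta_\lambda=\sum_{\mu\in W_0\lambda}\delta_\mu$ set up just before the statement. You have merely made explicit the orbit-sum isometry and the $\hat{T}(\mathcal{Z})$-invariance of the symmetric sector that the paper leaves tacit.
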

In particular, it is evident from Corollary \ref{sadjoint:cor} that the symmetrized operators $(\widehat{m_\lambda (Y)}+\widehat{m_{\lambda^*} (Y)})$ and $i(\widehat{m_\lambda (Y)}-\widehat{m_{\lambda^*} (Y)})$ are self-adjoint in
$l^2 (P^+,\Delta )$.

\vspace{1ex}
\begin{remark}\label{rem-expl} Let
$$e_q(\lambda):=\prod_{\alpha\in R^+}q_\alpha^{\langle \lambda,\alpha^\vee\rangle},\qquad
\lambda\in P.$$
It is instructive to recall to mind
that $q_{t_\lambda}$  and $W_{0,\lambda}(q^2)$
can be conveniently written explicitly in terms of the multiplicity function via the evaluation formula $q_{t_\lambda}=e_q(\lambda_+ )$ (by Eq. \eqref{delta} with the symmetries
$q_{t_\lambda}=q_{t_{w_o(\lambda_+)}}=q_{u_{w_o(\lambda_+)}}$, $q_{\alpha ,k}=q_\alpha$)
and Macdonald's classic product formula
\begin{equation}\label{poincare}
W_{0,\lambda}(q^2)=\prod_{\substack{\alpha\in R^+\\ \langle\lambda,\alpha^\vee\rangle =0}} \frac{1-q_\alpha^2 e_q(\alpha)}{1-e_q(\alpha )},
\end{equation}
respectively.
In particular, evaluation of the RHS of  $\Delta$ \eqref{mac-mes}
produces
\begin{equation}\label{mac-mes2}
\Delta_\lambda = e_q(-2\lambda ) \prod_{\substack{\alpha\in R^+\\ \langle\lambda,\alpha^\vee\rangle =0}} \frac{1- e_q(\alpha)}{1-q_\alpha^2 e_q(\alpha )} \qquad (\lambda\in P^+).
\end{equation}
\end{remark}

\begin{remark}
Let $\text{Vol}(A):=\int_A \text{d}\xi$, where $\text{d}\xi$ denotes the Lebesgue measure on $V$, and let
\begin{equation}
 \breve{\Delta}(\xi ):= \breve{\mathcal{N}}_0^{-1}\prod_{\alpha\in R^+}
 \left| \frac{1-e^{i\langle \alpha,\xi\rangle}}{1-q_\alpha^2 e^{i\langle \alpha,\xi\rangle}} \right|^2,
 \qquad \breve{\mathcal{N}}_0:=   (2\pi )^{n}|W_0| \text{Vol}(A) .
\end{equation}
For
$\breve{f},\breve{g}$ in the Hilbert space $L^2 (2\pi A,  \breve{\Delta}(\xi ) \text{d}\xi )$, their
inner product is written as
\begin{equation}
\langle \breve{f }, \breve{g}\rangle_{\breve{\Delta}} :=
\int_{2\pi A} \breve{f}(\xi ) \overline{\breve{g}(\xi )} \breve{\Delta}(\xi )\text{d} \xi  .
\end{equation}
It is well-known from Macdonald's theory \cite{mac:spherical1,mac:spherical2} (cf. also \cite[\S 10]{mac:orthogonal})
that the measure $\Delta$ \eqref{mac-mes2} turns the Fourier-Macdonald pairing
\begin{subequations}
\begin{equation}
 f=\mathcal{F}_q(\breve{f}):= \langle \breve{f} , \Phi (\cdot )\rangle_{\breve{\Delta}} =
 \int_{2\pi A} \breve{f}(\xi ) \overline{\Phi_\xi (\cdot )} \breve{\Delta}(\xi )\text{d} \xi ,
 \end{equation}
 with the kernel function (cf. Remark \ref{Macsph:rem})
 \begin{equation}
 \Phi_\xi (\lambda )=e_q(\lambda) (e^{i\xi}, P_{\lambda}) = e_q(\lambda)
 \sum_{w\in W_0} e^{i\langle{w\xi ,\lambda \rangle} }
 \prod_{\alpha\in R^+} \frac{1-q_\alpha^2 e^{-i\langle w\xi , \alpha\rangle }}{1-e^{-i\langle w\xi , \alpha\rangle}}
 \end{equation}
 ($\xi\in 2\pi A$, $\lambda\in P^+$), into an Hilbert space isomorphism
$\mathcal{F}_q: L^2 (2\pi A,  \breve{\Delta}(\xi ) \text{d}\xi )\to l^2(P^+,\Delta )$
with the inversion formula given by
\begin{equation}
\breve{f}=\mathcal{F}_q^{-1}(f)= \langle f , \Phi_{\cdot}\rangle_\Delta =\sum_{\lambda\in P^+} f (\lambda) \overline{\Phi_{\cdot} (\lambda)} \Delta_\lambda
\end{equation}
\end{subequations}
(where the dot $\cdot$ refers to the suppressed argument).
From this perspective, Theorem \ref{diagonal:thm}  (with $\xi\in 2\pi A$) provides the spectral decomposition
$\mathcal{F}_q \circ  E_p   \circ \mathcal{F}_q^{-1}$
of the bounded normal discrete difference operator $\widehat{p(Y)}$ in the Hilbert space $l^2(P^+,\Delta )\cong l^2(P,\delta )^{W_0}$ (where $E_p$ refers to the multiplication operator
$(E_p\breve{f})(\xi ):= E_p(\xi)\breve{f}(\xi )$ on $L^2 (2\pi A,   \breve{\Delta}(\xi )\text{d}\xi )$).
For $\omega$ a (quasi-)minuscule weight, the explicit action of the corresponding difference operator
$\widehat{m_\omega (Y)}$ in $l^2(P^+,\Delta )$ is provided by Corollary \ref{action-symmetric:cor} below.
\end{remark}

\section{The explicit action of $\widehat{m_\omega(Y)}$ and associated Pieri formulas}\label{sec7}
Throughout this section it will be assumed that $\omega\in P^+$ is (quasi-)minuscule (cf. Appendix \ref{appB} below).
By computing the action of $\widehat{m_\omega(Y)}$ on $C(P)$ in closed form, Theorem \ref{diagonal:thm} gives rise
to an explicit Pieri formula for the Macdonald spherical functions. To describe the action in question let us introduce a similarity transformation
$\epsilon :C(P)\to C(P)$
and a difference operator $M_\omega:C(P)\to C(P)$
of the form
$(\epsilon f)(\lambda):= q_{t_\lambda} f(w_o\lambda )$ ($f\in C(P)$, $\lambda\in P$) and
\begin{subequations}
\begin{equation}\label{Ma}
(M_\omega f)(\lambda):=
\sum_{\nu\in W_0\omega} \Bigl( a_{\lambda ,\nu} f(\lambda -\nu)+ b_{\lambda ,\nu}f(\lambda) \Bigr)\qquad
(f\in C(P),\lambda\in P),
\end{equation}
with
\begin{equation}
a_{\lambda ,\nu}:=q_{w_{w_\lambda (\lambda-\nu)}}
q_{w_{w_\lambda (\lambda-\nu)}w_\lambda}q_{w_\lambda}^{-1}\quad  \text{and}\quad
b_{\lambda,\nu}:= \varepsilon_{\lambda,\nu}(1-q_0^{-2})e_q(w_\lambda \nu),
\end{equation}
where $e_q(\cdot)$ is as defined in
Remark \ref{rem-expl},
\begin{equation}\label{Mb}
\varepsilon_{\lambda ,\nu} :=
\begin{cases}
\theta (w_\lambda (\lambda -\nu)) &\text{if}\ (\lambda-\nu)_+\neq\lambda_+ \\
\chi (\nu ) & \text{if}\ (\lambda-\nu)_+ =\lambda_+ \\
\end{cases} ,
\end{equation}
\end{subequations}
$\theta (\mu):= \langle \mu_+-\mu,\rho^\vee\rangle -\ell (w_\mu)$, and $\rho^\vee:=\frac{1}{2}\sum_{\alpha\in R^+}\alpha^\vee $.

\begin{theorem}\label{action:thm} One has that
$\widehat{m_\omega(Y)}=\epsilon M_\omega \epsilon^{-1}$.
\end{theorem}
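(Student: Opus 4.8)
The plan is to strip away the representation theory with the intertwiner and reduce the statement to a conjugation of the bare translation operator. By the Equivalence corollary following Theorem~\ref{bijectivity:thm} one has $\widehat{m_\omega(Y)}=\mathcal{J}\,I(m_\omega(Y))\,\mathcal{J}^{-1}$, and since $I(Y^\nu)=t_\nu$ for all $\nu$ (Proposition~\ref{int-ref-rep:prp}), the central element collapses under the integral-reflection representation to a bare sum of translations,
\[
I(m_\omega(Y))=\sum_{\nu\in W_0\omega}t_\nu .
\]
Writing $\mathcal{G}:=\epsilon^{-1}\mathcal{J}$, a linear automorphism of $C(P)$ by Theorem~\ref{bijectivity:thm}, the assertion $\widehat{m_\omega(Y)}=\epsilon M_\omega\epsilon^{-1}$ is equivalent to the single intertwining identity $M_\omega\,\mathcal{G}=\mathcal{G}\sum_{\nu\in W_0\omega}t_\nu$, i.e. to computing in closed form the operator $N:=\epsilon^{-1}\widehat{m_\omega(Y)}\epsilon=\mathcal{G}\bigl(\sum_\nu t_\nu\bigr)\mathcal{G}^{-1}$ and recognising it as $M_\omega$.

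To compute $N$ pointwise I would use $(\mathcal{J}f)(\lambda)=q_{t_\lambda}q_{w_\lambda}(I_{w_\lambda^{-1}}^{-1}f)(\lambda_+)$ — so that $\mathcal{G}=q_{w_o}I_{w_o}^{-1}$ on the dominant cone — together with the triangularity \eqref{tria} and the explicit inverse action \eqref{Ij} of the $I_j$; this already shows that $N$ is a finite difference operator whose only shifts are $\lambda\mapsto\lambda-\nu$ with $\nu\in W_0\omega$, plus a diagonal term, and that the shift weights assemble into $a_{\lambda,\nu}=q_{w_{w_\lambda(\lambda-\nu)}}q_{w_{w_\lambda(\lambda-\nu)}w_\lambda}q_{w_\lambda}^{-1}$ from the $q_{w_\lambda}$-prefactors as $\lambda$ moves to $\lambda-\nu$. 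The bookkeeping is most transparent on plane waves: a geometric summation of $J_j$ gives, uniformly in $\lambda$,
\[
I_je^{i\xi}=q_je^{is_j\xi}+(q_j-q_j^{-1})\,\frac{e^{i\xi}-e^{is_j\xi}}{1-e^{i\langle\alpha_j,\xi\rangle}},
\]
so $I_{w_\lambda^{-1}}^{-1}e^{i\xi}$ is a Macdonald $c$-function combination of the waves $e^{iv\xi}$ and $\mathcal{G}e^{i\xi}$ is an explicit finite combination of $e^{i\langle w\xi,\lambda_+\rangle}$ (the non-symmetric analogue of the kernel of Remark~\ref{Macsph:rem}; on the symmetric part one recovers $\epsilon^{-1}\Phi_\xi=\phi_\xi(\cdot_+)$). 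Because $\sum_\nu t_\nu e^{i\xi}=\bigl(\sum_{\nu\in W_0\omega}e^{-i\langle\nu,\xi\rangle}\bigr)e^{i\xi}$ with a $W_0$-invariant eigenvalue equal to $E_{m_\omega}(\xi)$ of Theorem~\ref{diagonal:thm}, the function $\mathcal{G}e^{i\xi}$ is an eigenfunction of $N$ with eigenvalue $E_{m_\omega}(\xi)$, and matching, for each $w\in W_0$, the coefficient of $e^{i\langle w\xi,\cdot\rangle}$ in $M_\omega(\mathcal{G}e^{i\xi})=E_{m_\omega}(\xi)\,\mathcal{G}e^{i\xi}$ pins down the coefficients of $M_\omega$.

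The main obstacle is the bookkeeping at the walls of the Weyl chambers, which is exactly what splits $\varepsilon_{\lambda,\nu}$ into two cases. When $\lambda-\nu$ leaves the chamber of $\lambda$ (so $(\lambda-\nu)_+\neq\lambda_+$) the shortest elements $w_\lambda$ and $w_{\lambda-\nu}$ differ, and the telescoping of the $c$-function ratios must be controlled through the length formula \eqref{length} and the string and saturation properties already used in the proof of Theorem~\ref{bijectivity:thm}; the (quasi-)minuscule hypothesis is precisely what keeps $\lambda-\nu$ in an adjacent facet, so that no further shifts are generated. The genuinely delicate contribution is the diagonal term $b_{\lambda,\nu}$: its characteristic factor $(1-q_0^{-2})$ originates from a crossing of the affine wall $V_0$ (labelled by $q_0$), and checking that the $\theta$- and $\chi$-data combine into the stated $b_{\lambda,\nu}$ is the technical heart of the argument, the generic interior contributions reducing to the straightforward geometric summation displayed above for $I_je^{i\xi}$.
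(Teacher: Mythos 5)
Your reduction is exactly the paper's: by the equivalence of $\hat{T}(\mathcal{H})$ and $I(\mathcal{H})$ and the fact that $I(m_\omega(Y))=\sum_{\nu\in W_0\omega}t_\nu$, the theorem becomes the single identity $M_\omega\,\mathcal{G}=\mathcal{G}\sum_{\nu}t_\nu$ with $\mathcal{G}=\epsilon^{-1}\mathcal{J}$ (the paper runs this through $\epsilon^{-1}\mathcal{J}I_{w_o}$ via Eq.~\eqref{Pop}, which is your formula $\mathcal{G}=q_{w_o}I_{w_o}^{-1}$ up to the stability relation \eqref{stable2}). Up to that point there is nothing to object to, but the core of the proof is missing. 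Your claim that triangularity \eqref{tria} together with \eqref{Ij} ``already shows'' that $N=\mathcal{G}\bigl(\sum_\nu t_\nu\bigr)\mathcal{G}^{-1}$ has only the shifts $\lambda\mapsto\lambda-\nu$, $\nu\in W_0\omega$, plus a diagonal term, is not correct as stated: those estimates only bound the support of $(I_{w_\lambda}f)(w_\lambda(\lambda-\nu))$ by an order ideal for $\preceq$, so a priori $N$ picks up contributions from all intermediate points of the $\alpha$-strings crossed on the way back to dominance. That all of these telescope away except for a \emph{single} diagonal term carrying the factor $(1-q_0^{-2})$ is precisely the content of the paper's Lemma~\ref{tlem2} (proved by induction on $\ell(w_{\lambda-\nu})$ through the three cases $(A)$--$(C)$, using \eqref{Ijact}), combined with the dichotomy of Lemma~\ref{tlem1}, which pins down $\theta$, shows $w_{\lambda-\nu}\in W_{0,\lambda}$ or $s_jw_{\lambda-\nu}\in W_{0,\lambda}$ with $q_j=q_0$, and is where quasi-minusculity actually enters (not via a vague ``adjacent facet'' principle). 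On top of this, the wall case $(\lambda-\nu)_+=\lambda_+$ of the key identity \eqref{qm2} still requires $q^2_{w_{w_\lambda(\lambda-\nu)}}=e_q(w_\lambda\nu)$, which the paper derives from \eqref{(i)a}, \eqref{(i)b}, \eqref{(ii)} and the length computation \eqref{lest}. You explicitly defer all of this as ``the technical heart'' without giving an argument, so the decisive steps are simply absent.

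The plane-wave strategy you propose instead is, as written, circular. The relation $N(\mathcal{G}e^{i\xi})=E_{m_\omega}(\xi)\,\mathcal{G}e^{i\xi}$ holds by construction, and the spanning argument for the family $\{\mathcal{G}e^{i\xi}\}$ can indeed be made rigorous (both $N$ and $M_\omega$ depend at each $\lambda$ on finitely many values of $f$, and $\mathcal{G}$ is a triangular automorphism). But to conclude $N=M_\omega$ you must verify \emph{independently} that the explicitly given $M_\omega$ (with the particular $a_{\lambda,\nu}$, $b_{\lambda,\nu}$, $\varepsilon_{\lambda,\nu}$) satisfies $M_\omega(\mathcal{G}e^{i\xi})=E_{m_\omega}(\xi)\,\mathcal{G}e^{i\xi}$; that statement is equivalent to the Pieri formula of Corollary~\ref{pieri:cor}, which in the paper is a \emph{consequence} of Theorem~\ref{action:thm} together with Theorem~\ref{diagonal:thm}, so you cannot take it as input. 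A direct $c$-function telescoping proof of the Pieri identity would be a legitimate alternative route (compare \cite{die-ems:pieri}), but it is a substantial computation in its own right, concentrated exactly at the chamber walls where the coefficients degenerate --- and nothing in your proposal carries it out. As it stands, the proposal is a correct framing of the problem with the proof of the central identity replaced by a pointer to where the difficulty lies.
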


\begin{corollary}\label{action-symmetric:cor}
The restriction of the action of $\widehat{m_\omega(Y)}$ to $C(P)^{W_0}\cong C(P^+)$ is given by
\begin{subequations}
\begin{eqnarray}\label{Msa}
\lefteqn{(\widehat{m_\omega(Y)} f)(\lambda)=} && \\
&&U_{\lambda ,-\omega}(q^2)f(\lambda ) +\sum_{\substack{  \nu\in W_0\omega \\ \lambda -\nu\in P^+}} V_{\lambda ,-\nu}(q^2)f(\lambda-\nu)\qquad
(f\in C(P^+),\lambda\in P^+), \nonumber
\end{eqnarray}
with
\begin{equation}\label{Msb}
V_{\lambda ,\nu }(q^2) := e_q(-\nu) \prod_{\substack{ \alpha\in R^+\\ \langle \lambda ,\alpha^\vee\rangle =0 \\
\langle \nu ,\alpha^\vee\rangle >0}}
\frac{1-q_\alpha^2 e_q(\alpha)}{1-e_q(\alpha)}
\end{equation}
and
\begin{eqnarray}\label{Msc}
\lefteqn{U_{\lambda ,\mu }(q^2):= } && \\
&& \begin{cases}\displaystyle
0 & \text{for}\ \mu_+ \ \text{minuscule}, \\
\displaystyle \sum_{\nu\in W_0 \mu} e_q(\nu) -\sum_{\substack{ \nu\in W_0\mu\\\lambda +\nu\in P^+}}
V_{\lambda,\nu}(q^2) & \text{for}\ \mu_+ \ \text{quasi-minuscule} .
\end{cases} \nonumber
\end{eqnarray}
\end{subequations}
\end{corollary}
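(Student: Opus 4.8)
The plan is to deduce Corollary \ref{action-symmetric:cor} directly from Theorem \ref{action:thm} by restricting the conjugated operator $\widehat{m_\omega(Y)}=\epsilon M_\omega\epsilon^{-1}$ to the $W_0$-invariant sector and folding the orbit sum in \eqref{Ma} back into the dominant cone. First I would record that on an invariant function $f\in C(P)^{W_0}$ the inverse similarity transformation acts by plain multiplication, $(\epsilon^{-1}f)(\mu)=q_{t_\mu}^{-1}f(\mu)$, since $f(w_o\mu)=f(\mu)$ and $q_{t_{w_o\mu}}=q_{t_\mu}$. Because $\widehat{m_\omega(Y)}$ preserves $C(P)^{W_0}$ (the centre stabilizes the invariants), it then suffices to evaluate, for dominant $\lambda$, the quantity $(\widehat{m_\omega(Y)}f)(\lambda)=q_{t_\lambda}(M_\omega\epsilon^{-1}f)(w_o\lambda)$ at the antidominant point $\mu_0:=w_o\lambda$, where $w_{\mu_0}=w_o$ and $(\mu_0)_+=\lambda$.

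Next I would reindex the orbit sum by $\nu\mapsto w_o\nu$, so that the shifts $\mu_0-\nu$ turn into $w_o(\lambda-\nu)$ and the arguments of $f$ collapse to the dominant representatives $(\lambda-\nu)_+$. Using $w_{\mu_0}=w_o$ together with the evaluation $q_{t_\mu}=e_q(\mu_+)$ from Remark \ref{rem-expl}, the coefficient $q_{t_\lambda}\,a_{\mu_0,\nu}\,q_{t_{\mu_0-\nu}}^{-1}$ of each off-diagonal term simplifies to a ratio of $e_q$-values multiplied by the remaining $q_{w_{(\cdot)}}$-factors. The decisive bookkeeping step is to partition $W_0\omega$ according to the value of $(\lambda-\nu)_+$: the summands with $(\lambda-\nu)_+=\lambda_+=\lambda$ are diagonal and, once combined with the $b_{\mu_0,\nu}$-terms, feed into $U_{\lambda,-\omega}(q^2)$, whereas for each dominant $\kappa\prec\lambda$ all summands with $(\lambda-\nu)_+=\kappa$ multiply the single value $f(\kappa)$ and must be shown to assemble into $V_{\lambda,-\nu_0}(q^2)$, where $\nu_0$ is the unique orbit element with $\lambda-\nu_0=\kappa$.

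The heart of the argument, and the step I expect to be the main obstacle, is verifying that these grouped coefficients collapse into the closed product forms \eqref{Msb} and \eqref{Msc}. For the off-diagonal coefficient one sums the contributions of all $\nu\in W_0\omega$ whose shift folds to a fixed $\kappa$; here the stabilizer $W_{0,\lambda}$ permutes this set (since $\lambda-w\nu=w(\lambda-\nu)$ for $w\in W_{0,\lambda}$), and Macdonald's product formula \eqref{poincare} for the Poincar\'e series $W_{0,\lambda}(q^2)$ is precisely what converts the resulting rational sum into $e_q(\nu_0)\prod_{\alpha}\frac{1-q_\alpha^2 e_q(\alpha)}{1-e_q(\alpha)}$ over the relevant wall roots with $\langle\lambda,\alpha^\vee\rangle=0$.

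For the diagonal coefficient I would exploit that $\omega$ is (quasi-)minuscule. In the minuscule case the diagonal contributions, namely the $b$-terms together with the folded off-diagonal terms returning to the orbit of $\lambda$, cancel, so that $U_{\lambda,-\omega}=0$; in the quasi-minuscule case the extra weight at the origin in $W_0\omega\cup\{0\}$ obstructs this cancellation and leaves precisely the residual sum $\sum_{\nu}e_q(\nu)$ corrected by the folded terms, which is the content of \eqref{Msc}. The delicate points throughout are controlling the signs and stabilizer factors encoded in $\varepsilon_{\lambda,\nu}$ and $\theta$ when $\lambda$ lies on a wall, and confirming the dichotomy between the minuscule and quasi-minuscule cases.
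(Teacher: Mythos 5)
Your overall strategy coincides with the paper's own proof: restrict the conjugation $\widehat{m_\omega(Y)}=\epsilon M_\omega\epsilon^{-1}$ of Theorem \ref{action:thm} to $C(P)^{W_0}$, fold the orbit sum to dominant representatives, and convert the grouped coefficients via the stabilizer $W_{0,\lambda}$ and Macdonald's product formula \eqref{poincare} (this is exactly how the paper arrives at \eqref{Valt} and then \eqref{Msb}). However, your evaluation step contains a concrete error: for $\mu_0=w_o\lambda$ with $\lambda\in P^+$ the identity $w_{\mu_0}=w_o$ holds \emph{only} when $\lambda$ is regular; in general $w_{\mu_0}$ is the shortest element of the coset $W_{0,\lambda}w_o$. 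This is not cosmetic. If you substitute $w_o$ for $w_{\mu_0}$ in $a_{\mu_0,\nu}=q_{w_{w_{\mu_0}(\mu_0-\nu)}}q_{w_{w_{\mu_0}(\mu_0-\nu)}w_{\mu_0}}q_{w_{\mu_0}}^{-1}$, then with $v:=w_{\lambda-w_o\nu}$ the relation $\ell(vw_o)=\ell(w_o)-\ell(v)$ forces $q_{vw_o}=q_{w_o}q_v^{-1}$ and hence $a_{\mu_0,\nu}=q_v\, q_{w_o}q_v^{-1}\, q_{w_o}^{-1}=1$ identically, which destroys precisely the weights $q^2_{w_{\lambda-\nu'}}$ in \eqref{Valt} that generate the wall product in \eqref{Msb} --- and for regular $\lambda$ that product is empty, so the entire content of \eqref{Msb} lives on the walls you mishandle. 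The repair is to use that $w_{\mu_0}$ is the minimal coset representative, so $\ell(vw_{\mu_0})=\ell(v)+\ell(w_{\mu_0})$ for $v\in W_{0,\lambda}$, which together with Lemma \ref{tlem1} (or the stability relations \eqref{stable}, \eqref{stable2}, as in the paper's proof of Theorem \ref{action:thm}) gives $a_{\mu_0,\nu}=q^2_{w_{\lambda-w_{\mu_0}\nu}}$; after that, your fiber argument (transitivity of $W_{0,\lambda}$ on $\{\nu'\in W_0\omega\mid (\lambda-\nu')_+=\lambda-\nu\}$, which indeed follows from Lemma \ref{tlem1} part (i)) and \eqref{poincare} do yield \eqref{Msb}.

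The second, more serious gap concerns the diagonal coefficient \eqref{Msc}. Your minuscule case is fine (both potential diagonal contributions are empty: $\varepsilon_{\lambda,\nu}=0$, and $(\lambda-\nu)_+=\lambda$ would force $\nu\in W_0\omega$ to be a root, which is excluded for minuscule $\omega$). But for quasi-minuscule $\omega$ your claim that the surviving terms "leave precisely the residual sum $\sum_\nu e_q(\nu)$ corrected by the folded terms" is asserted without any mechanism. What the folding actually produces is the raw expression \eqref{Ualt}, namely $\sum_{\nu:(\lambda-\nu)_+=\lambda}q^2_{w_{\lambda-\nu}}+(1-q_0^{-2})\sum_{\nu:\, w_{\lambda-\nu}\lambda=\lambda}\theta(\lambda-\nu)e_q(\nu)$, and identifying this with the closed form $\sum_{\nu\in W_0\omega}e_q(\nu)-\sum_{\nu:\,\lambda+\nu\in P^+}V_{\lambda,\nu}(q^2)$ is a nontrivial sum rule. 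Notably, the paper does not prove this identity internally either: it stops at \eqref{Ualt} and obtains \eqref{Msc} by comparing the resulting Pieri formula of Corollary \ref{pieri:cor} with \cite[Eqs.~(2.3a)--(2.3c)]{die-ems:pieri}. To complete your proof you must either invoke that external comparison as the paper does, or supply an independent verification of the sum rule; your "obstruction at the origin" heuristic does not do this.
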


The diagonalization Theorem \ref{diagonal:thm} combined with the symmetric reduction
(Corollary \ref{action-symmetric:cor})
of the explicit action of $\widehat{m_\omega(Y)}$ (Theorem \ref{action:thm}),
immediately produces the following Pieri formula expressing
the multiplicative action of $m_\omega$ in $\mathbb{C}[P]^{W_0}$ in terms of
the
Macdonald spherical basis
$p_\lambda:=e_q(\lambda)P_{\lambda^*}$, $\lambda\in P^+$.
\begin{corollary}[Pieri formula]\label{pieri:cor} One has that
\begin{equation}
m_\omega p_\lambda = U_{\lambda ,\omega}(q^2)p_\lambda +\sum_{\substack{  \nu\in W_0\omega \\ \lambda +\nu\in P^+}} V_{\lambda ,\nu}(q^2)p_{\lambda+\nu}\quad (\lambda\in P^+).
\end{equation}
\end{corollary}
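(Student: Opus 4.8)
The plan is to read off the Pieri formula as the polynomial-side transcription of the eigenvalue identity in Theorem \ref{diagonal:thm}, combining the explicit difference action of Corollary \ref{action-symmetric:cor} (which rests on Theorem \ref{action:thm}) with the closed kernel formula for the spherical function from Proposition \ref{mac-for:prp} and Remark \ref{Macsph:rem}. Concretely, I would fix $\xi\in V$, regard the $W_0$-invariant function $\Phi_\xi\in C(P)^{W_0}\cong C(P^+)$ as the argument $f$ in Corollary \ref{action-symmetric:cor}, and evaluate $\widehat{m_\omega(Y)}\Phi_\xi$ in two ways. On one hand, Theorem \ref{diagonal:thm} gives $\widehat{m_\omega(Y)}\Phi_\xi=E_{m_\omega}(\xi)\Phi_\xi$ with $E_{m_\omega}(\xi)=(m_\omega,e^{-i\xi})$; on the other hand, Corollary \ref{action-symmetric:cor} expresses the same quantity at $\lambda\in P^+$ as $U_{\lambda,-\omega}(q^2)\Phi_\xi(\lambda)+\sum_{\nu\in W_0\omega,\ \lambda-\nu\in P^+}V_{\lambda,-\nu}(q^2)\Phi_\xi(\lambda-\nu)$.

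Next I would substitute the kernel identity $\Phi_\xi(\lambda)=e_q(\lambda)(e^{i\xi},P_\lambda)$ (valid for $\lambda\in P^+$ by Proposition \ref{mac-for:prp} together with $q_{t_\lambda}=e_q(\lambda)$ from Remark \ref{rem-expl}) and exploit that $p\mapsto(e^{i\xi},p)$ is multiplicative: since $e^\mu e^{i\xi}=e^{-i\langle\mu,\xi\rangle}e^{i\xi}$ one gets $(m_\omega,e^{-i\xi})(e^{i\xi},P_\lambda)=(e^{i\xi},m_\omega P_\lambda)$. Cancelling the common factor $e_q(\lambda)$ and using $e_q(\lambda-\nu)/e_q(\lambda)=e_q(-\nu)$, the identity becomes $(e^{i\xi},m_\omega P_\lambda)=U_{\lambda,-\omega}(q^2)(e^{i\xi},P_\lambda)+\sum_\nu V_{\lambda,-\nu}(q^2)e_q(-\nu)(e^{i\xi},P_{\lambda-\nu})$ for every $\xi$. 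Because the exponentials $\xi\mapsto e^{-i\langle\mu,\xi\rangle}$ ($\mu\in P$) are linearly independent, the map $p\mapsto(e^{i\xi},\cdot)$ is injective on $\mathbb{C}[P]$, so I may strip it off to obtain the polynomial identity $m_\omega P_\lambda=U_{\lambda,-\omega}(q^2)P_\lambda+\sum_{\nu\in W_0\omega,\ \lambda-\nu\in P^+}V_{\lambda,-\nu}(q^2)e_q(-\nu)P_{\lambda-\nu}$.

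It then remains to pass from the basis $\{P_\lambda\}$ to the stated spherical basis $p_\lambda=e_q(\lambda)P_{\lambda^*}$ via $P_\lambda=e_q(-\lambda^*)p_{\lambda^*}$, to replace $\lambda$ by $\lambda^*$, and to reindex the orbit sum by $\nu\mapsto w_o\nu$. Two arithmetic simplifications drive this: $e_q(-w_o\mu)=e_q(\mu)$ for all $\mu$ (from the bijection $\alpha\mapsto-w_o\alpha$ of $R^+$ together with the $W_0$-invariance of the length, hence of $q_\alpha$), and $e_q(-\nu)e_q(\nu)=1$. Under these, the transition coefficients collapse exactly, the condition $\lambda^*-\nu\in P^+$ turns into $\lambda+w_o\nu\in P^+$, and the claimed formula emerges once one verifies the two covariance identities $V_{\lambda^*,-\nu}(q^2)=V_{\lambda,w_o\nu}(q^2)$ and $U_{\lambda^*,-\omega}(q^2)=U_{\lambda,\omega}(q^2)$.

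The main---and essentially only---obstacle is this last covariance check. For $V$ it reduces to noting that $\alpha\mapsto-w_o\alpha$ is a length- and multiplicity-preserving involution of $R^+$ carrying the wall conditions $\langle\lambda^*,\alpha^\vee\rangle=0$, $\langle-\nu,\alpha^\vee\rangle>0$ onto $\langle\lambda,\beta^\vee\rangle=0$, $\langle w_o\nu,\beta^\vee\rangle>0$, so that the defining product in \eqref{Msb} is left invariant while the prefactors match through $e_q(-w_o\nu)=e_q(\nu)$; for $U$ it follows from the same root bijection applied to the quasi-minuscule formula \eqref{Msc}, together with the fact that $(-\omega)_+=\omega^*$ is (quasi-)minuscule precisely when $\omega$ is. Once these routine symmetries are established, the reindexed expansion matches the claimed Pieri formula term by term, completing the proof.
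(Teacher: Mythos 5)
Your proposal is correct and takes essentially the same route as the paper, whose proof consists of the single observation that Theorem \ref{diagonal:thm} combined with Corollary \ref{action-symmetric:cor} ``immediately produces'' the Pieri formula. You have simply made explicit the bookkeeping the paper leaves implicit---the kernel substitution $\Phi_\xi(\lambda)=e_q(\lambda)(e^{i\xi},P_\lambda)$, the multiplicativity and injectivity of the plane-wave pairing, and the $-w_o$-covariance identities $V_{\lambda^*,-\nu}(q^2)=V_{\lambda,w_o\nu}(q^2)$, $U_{\lambda^*,-\omega}(q^2)=U_{\lambda,\omega}(q^2)$ needed to pass from the basis $e_q(\lambda)P_\lambda$ to $p_\lambda=e_q(\lambda)P_{\lambda^*}$---and all of these checks are accurate.
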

The Pieri formula in Corollary \ref{pieri:cor} is a special case of Pieri formulas for the Macdonald spherical functions
obtained via degeneration descending from the level of  the Macdonald polynomials \cite{die-ems:pieri}. For root systems of type $A$ and $\omega$ minuscule the Pieri formula under consideration amounts to a classic Pieri formula for the Hall-Littlewood polynomials due to Morris \cite{mor:note} (cf. Appendix \ref{appC} below).

\begin{remark}
The factor $\varepsilon_{\lambda ,\nu}$ in the coefficients of  $M_\omega$ \eqref{Ma},\eqref{Mb}
takes values in $\{ 0,1\}$.
For $\omega$ minuscule the factor in question vanishes (so $b_{\lambda,\nu}=0$) and the coefficient
$a_{\lambda ,\nu}$ simplifies to $q_{w_{w_\lambda (\lambda-\nu)}}^2$.
\end{remark}

\begin{remark}
It is manifest from the relations in Corollary \ref{sadjoint:cor} that
the adjoint of
$\widehat{m_\omega(Y)}$ \eqref{Msa}--\eqref{Msc} in the Hilbert space
$\ell^2 (P^+,\Delta )$
is given by the action of $\widehat{m_{\omega^*}(Y)}$ on $\ell^2 (P^+,\Delta )$.
More generally, it follows from the unitarity in Theorem \ref{unitary1:thm} that
the adjoint of $\widehat{m_\omega(Y)}$ \eqref{Ma}, \eqref{Mb}
in the Hilbert space $\ell^2 (P,\delta )$
is
given by the action of $\widehat{m_{\omega^*}(Y)}$ on $\ell^2 (P,\delta )$.
Since
$\omega^*$ is (quasi-)minuscule if (and only if) $\omega$ is (quasi-)minuscule, this means that
these adjoints are given by the same formulas of Corollary \ref{action-symmetric:cor} and Theorem \ref{action:thm}, respectively, with $\omega$ being replaced by $\omega^*$.
In particular, for $\omega$ quasi-minuscule the operators in question are self-adjoint
(as in this situation $\omega^*=\omega$).
\end{remark}

\begin{remark}\label{discrete-laplacian:rem}
Corollary \ref{action-symmetric:cor} provides an explicit formula for the discretization of the Laplacian with delta potential associated with $R$ from
Ref. \cite{die:plancherel} (cf. Remark \ref{intertwining:rem}).
\end{remark}

\begin{remark}
The standard polynomial representation of the affine Hecke algebra in terms of Demazure-Lusztig operators (dual to our integral-reflection representation $I(\mathcal{H})$)
was extended by Cherednik to a representation of the double affine Hecke algebra  \cite{che:double,mac:affine}. The representation in question contains Dunkl-type $q$-difference-reflection operators that were used for the construction of Macdonald's commuting $q$-difference operators diagonalized by the Macdonald polynomials
\cite{che:double,mac:affine}. Since Macdonald's polynomials are a $q$-deformation of the Macdonald spherical functions \cite{mac:orthogonal},
our difference-reflection representation $\hat{T}(\mathcal{H})$ is expected to correspond to a suitable degeneration of
Cherednik's representation of the double affine Hecke algebra (therewith linking the latter representation to
the differential-reflection representation of the
graded affine Hecke algebra in Ref. \cite{ems-opd-sto:periodic}).
\end{remark}

\subsection{Proof of Theorem \ref{action:thm}}
Since $\ell (w_ow)=\ell (w_o)-\ell(w)$ for any $w\in W_0$, it follows that
$q_{w_ow^{-1}}=q_{w_o} q_w ^{-1}$ and
$T_{w_ow^{-1}}^{-1} T_{w_o}=T_{w}$, whence
\begin{equation}\label{J-P}
q_{w_ow^{-1}}(I^{-1}_{w_ow^{-1}} I_{w_o}f)(\lambda)=q_{w_o}q_w^{-1}(I_w f)(\lambda)\quad (f\in C(P),\lambda\in P, w\in W_0).
\end{equation}
Combined with Eq. \eqref{stable}, this yields the following stability property for $w,w^\prime\in W_0$ and $\lambda\in P^+$
\begin{equation}\label{stable2}
q_w^{-1}(I_w f)(\lambda ) =q_{w^\prime}^{-1}(I_{w^\prime} f)(\lambda )\quad\text{if}\quad w (w^\prime)^{-1}\in W_{0,\lambda}.
\end{equation}

Let us now abbreviate $I(m_\omega(Y))=\sum_{\nu\in W_0\omega } t_\nu$ as $m_\omega(t)$.
In view of the intertwining relations (Theorem \ref{intertwining:thm}) and the bijectivity
of the intertwining operator $\mathcal J$ (Theorem \ref{bijectivity:thm}),
it is sufficient for proving the theorem
to show that $$\mathcal J m_\omega(t)=\epsilon  M_\omega \epsilon ^{-1}\mathcal J,$$
or equivalently (since $m_\omega(Y)\in \mathcal{Z}(\mathcal{H})$), that
$$
\epsilon^{-1}\mathcal J I_{w_o} m_\omega(t)= M_\omega \epsilon ^{-1} \mathcal J I_{w_o}.
$$
Relation \eqref{J-P} and stability properties in Eqs. \eqref{stable}, \eqref{stable2} imply that
\begin{equation}\label{Pop}
(\epsilon ^{-1}\mathcal J  I_{w_o}f)(\lambda)= q_{w_o} q_{w_\lambda}^{-1}(I_{w_\lambda}f)(\lambda_+) \quad (f\in C(P), \lambda\in P),
\end{equation}
and thus (using again that $m_\omega(Y)\in \mathcal{Z}(\mathcal{H})$)
$$
(\epsilon^{-1}\mathcal J I_{w_o} m_\omega(t)f)(\lambda)=q_{w_o} q_{w_\lambda}^{-1}\sum_{\nu\in W_0\omega} (I_{w_\lambda} f)(w_\lambda(\lambda-\nu)).
$$
That this expression is equal to $ (M_\omega  \epsilon^{-1}\mathcal J I_{w_o} f)(\lambda)$  hinges on the identity
\begin{eqnarray}\label{qm2}
\lefteqn{a_{\lambda ,\nu}( \epsilon^{-1}\mathcal J I_{w_o}f)(\lambda-\nu)
=  } && \\
&& q_{w_o}q^{-1}_{w_\lambda} \bigl( (I_{w_\lambda} f)(w_\lambda(\lambda-\nu))
 -  \varepsilon_{\lambda,\nu} (1-q_0^{-2}) e_q(w_\lambda\nu) (I_{w_\lambda} f)(\lambda_+)\bigr) \nonumber
\end{eqnarray}
(combined with Eq. \eqref{Pop}).
To infer the identity in Eq. \eqref{qm2} the following lemmas are instrumental.

\begin{lemma}\label{tlem1}
For $\lambda\in P^+$ and $\nu\in W_0\omega$, we are in either one of the following two situations:
(i) if $(\lambda-\nu)_+\neq \lambda$ then
$w_{\lambda-\nu}\in W_{0, \lambda}$  and
$$\theta (\lambda-\nu) =
\begin{cases} 1&\text{for}\ \nu\in R(w_{\lambda-\nu})\\
0&\text{for}\ \nu\not\in R(w_{\lambda-\nu})
\end{cases},\quad \text{where}\ R(w):=R^+\cap w^{-1}(R^-),
$$
or
(ii) if $(\lambda-\nu)_+=\lambda$ then
$w_{\lambda-\nu}\nu=-\alpha_j$ for some $j\in \{ 1,\ldots ,n\}$, moreover,
$s_jw_{\lambda-\nu}\in W_{0, \lambda}$,  $\theta(\lambda-\nu)=0$,
$R(w_{\lambda -\nu})=R(s_jw_{\lambda -\nu})\cup\{\nu\} $ and $q_j=q_0$.
\end{lemma}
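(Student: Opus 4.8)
The plan is to treat both alternatives at once by analyzing the \emph{greedy dominantization} of $\mu:=\lambda-\nu$. First I would run the standard sorting algorithm: set $\mu^{(0)}:=\mu$ and $\mu^{(k)}:=s_{i_k}\mu^{(k-1)}$, where at each stage $i_k$ is chosen with $\langle\mu^{(k-1)},\alpha_{i_k}^\vee\rangle<0$. This reaches $\mu^{(\ell)}=(\lambda-\nu)_+$ after $\ell=\ell(w_{\lambda-\nu})$ steps, and $w_{\lambda-\nu}=s_{i_\ell}\cdots s_{i_1}$ is then a reduced expression (applying $s_i$ to a weight on which it is strictly negative lowers the length of the associated shortest element). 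Writing $c_k:=-\langle\mu^{(k-1)},\alpha_{i_k}^\vee\rangle\ge 1$, we have $\mu^{(k)}=\mu^{(k-1)}+c_k\alpha_{i_k}$, so that $\langle\mu^{(k)}-\mu^{(k-1)},\rho^\vee\rangle=c_k$ because $\langle\alpha_{i_k},\rho^\vee\rangle=1$. Telescoping gives $\langle(\lambda-\nu)_+-(\lambda-\nu),\rho^\vee\rangle=\sum_k c_k$, hence the bookkeeping identity $\theta(\lambda-\nu)=\sum_{k=1}^\ell(c_k-1)$ with every summand $\ge0$; in particular $\theta(\lambda-\nu)\ge 0$.

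Next I would use the (quasi-)minuscule hypothesis to control the trajectory $\nu^{(k)}$ defined by $\mu^{(k)}=\lambda-\nu^{(k)}$. Since $\langle\nu^{(k-1)},\beta^\vee\rangle\le 2$ for any root $\beta$, the reflection condition forces $\langle\nu^{(k-1)},\alpha_{i_k}^\vee\rangle>\langle\lambda,\alpha_{i_k}^\vee\rangle\ge 0$, and a short case-check (using that $\langle\beta,\alpha^\vee\rangle=2$ for roots forces $\beta=\alpha$ when $\beta$ is not longer than $\alpha$) shows each step is of exactly one type: (a) $\langle\lambda,\alpha_{i_k}^\vee\rangle=0$, $\langle\nu^{(k-1)},\alpha_{i_k}^\vee\rangle=1$, so $c_k=1$ and $\nu^{(k)}=s_{i_k}\nu^{(k-1)}\in W_0\omega$; (b) $\langle\lambda,\alpha_{i_k}^\vee\rangle=0$ and $\nu^{(k-1)}=\alpha_{i_k}$ a short simple root, so $c_k=2$ and $\nu^{(k)}=-\alpha_{i_k}\in W_0\omega$; (c) $\langle\lambda,\alpha_{i_k}^\vee\rangle=1$ and $\nu^{(k-1)}=\alpha_{i_k}$, so $c_k=1$ and $\nu^{(k)}=0$, i.e. $\mu^{(k)}=\lambda$. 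Thus the running weight stays of the form $\lambda-\nu^{(k)}$ with $\nu^{(k)}\in W_0\omega$ except for a possible terminal step of type (c); moreover only type (c) reflects across a wall on which $\lambda$ is nonzero, every (a)/(b) reflection lying in $W_{0,\lambda}$. Since a type (c) step ends the run at $\lambda$, it occurs if and only if $(\lambda-\nu)_+=\lambda$, which separates case (ii) from case (i).

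The heart of the argument is a monotonicity (``at most one sign change'') statement for $\nu^{(k)}$. A simple reflection $s_{i_k}$ keeps a positive root positive unless it equals $\alpha_{i_k}$ and keeps a negative root negative unless it equals $-\alpha_{i_k}$; but a step with $\nu^{(k-1)}=-\alpha_{i_k}$ is impossible, since it would give $\langle\nu^{(k-1)},\alpha_{i_k}^\vee\rangle=-2<0$. Hence once $\nu^{(k)}$ becomes negative it stays negative, so the only sign-reversing step, type (b), happens at most once, and a type (c) step (which needs the positive simple root $\alpha_{i_\ell}$) must be terminal with no type (b) step preceding it. This produces the dichotomy. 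In case (i) no type (c) step occurs, so $w_{\lambda-\nu}\in W_{0,\lambda}$, and $\theta(\lambda-\nu)=\#\{\text{type (b) steps}\}\in\{0,1\}$ equals $1$ exactly when the positive short root $\nu$ has its sign reversed, i.e. $w_{\lambda-\nu}\nu<0$, which is precisely $\nu\in R(w_{\lambda-\nu})$ (for minuscule $\omega$ no type (b) step can occur and $\theta=0$). In case (ii) all $c_k=1$, so $\theta(\lambda-\nu)=0$, and $w_{\lambda-\nu}\nu=s_{i_\ell}\nu^{(\ell-1)}=s_{i_\ell}\alpha_{i_\ell}=-\alpha_{i_\ell}=:-\alpha_j$.

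The remaining assertions in case (ii) are then routine. Because only the terminal reflection leaves $W_{0,\lambda}$, the product $s_jw_{\lambda-\nu}=s_{i_{\ell-1}}\cdots s_{i_1}$ of the earlier reflections lies in $W_{0,\lambda}$; the identity $R(w_{\lambda-\nu})=R(s_jw_{\lambda-\nu})\cup\{\nu\}$ follows from the length-additive factorization $w_{\lambda-\nu}=s_j\,(s_jw_{\lambda-\nu})$ together with $w_{\lambda-\nu}\nu=-\alpha_j<0$, verifying that $\nu$ is the unique inversion created; and $q_j=q_0$ holds because $\alpha_j=\alpha_{i_\ell}$ is short (being in the short-root orbit $W_0\omega$) and $\alpha_0$ is short as well (its coroot is the highest root of $R^\vee$), while the multiplicity function depends only on root length. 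I expect the monotonicity step of the third paragraph to be the genuine obstacle: showing that type (b) occurs at most once and that type (c) is forced to be terminal is exactly what pins $\theta$ to $\{0,1\}$ and cleanly splits the two alternatives, the rest being the telescoping identity and standard inversion-set combinatorics.
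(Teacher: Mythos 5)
Your proof is correct, and its skeleton coincides with the paper's: the paper likewise descends from $\lambda-\nu$ to $(\lambda-\nu)_+$ along a chain of simple reflections attached to a reduced word $w_{\lambda-\nu}=s_{j_\ell}\cdots s_{j_1}$, classifies each step into exactly your three types (its Cases $(A)$, $(B)$, $(C)$ match your (a), (b), (c) verbatim, including $s_j\in W_{0,\lambda}$ in $(A)$/$(B)$, the forcing $\nu^{(k-1)}=\alpha_{i_k}$ in $(B)$/$(C)$, and $q_j=q_0$ via the orbit $W_0\alpha_0$), and observes as you do that Case $(C)$ can only occur as the terminal step since it lands back in $P^+$. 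Where you genuinely diverge is in the final bookkeeping. The paper introduces the inversion roots $\beta_k:=s_{j_1}\cdots s_{j_k}\alpha_{j_{k+1}}$, uses $R(w_{\lambda-\nu})=\{\beta_0,\ldots,\beta_{\ell-1}\}$, and counts the Case-$(B)$ steps through the equivalence $\langle\nu_k,\alpha_{j_{k+1}}^\vee\rangle=2\Leftrightarrow\nu=\beta_k$ (shortness of $\nu$), so that $\theta\in\{0,1\}$, the membership test $\nu\in R(w_{\lambda-\nu})$, and the case-(ii) identity $\nu=\beta_{\ell-1}\in R(w_{\lambda-\nu})\setminus R(s_jw_{\lambda-\nu})$ all drop out at once from the distinctness of the $\beta_k$. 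You instead obtain $\theta(\lambda-\nu)=\sum_k(c_k-1)$ by telescoping against $\rho^\vee$, pin down the $(B)$-count by the at-most-one-sign-change argument for the trajectory $\nu^{(k)}$ (a step with $\nu^{(k-1)}=-\alpha_{i_k}$ being excluded by positivity of the pairing), translate this into the criterion $\nu>0$ and $w_{\lambda-\nu}\nu<0$, i.e. $\nu\in R(w_{\lambda-\nu})$, and replace the paper's $\beta_{\ell-1}$ computation by the standard inversion-set identity for the length-additive factorization $w_{\lambda-\nu}=s_j\,(s_jw_{\lambda-\nu})$. Both mechanisms are sound and rest on the same trichotomy; your telescoping identity makes $\theta\geq 0$ and the role of the coefficients $c_k$ completely transparent, while the paper's $\beta_k$-enumeration packages the $R(w)$-membership statements of both cases in a single stroke.
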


Before embarking on the proof of this lemma, let us first highlight some crucial (though elementary) observations.
For any $\lambda\in P$ the set $R(w_\lambda)$ is given by
$$R(w_\lambda)=\{ \alpha\in R^+ \mid \langle \lambda,\alpha^\vee \rangle < 0 \}$$ (cf.
\cite[\text{Eq.}~(2.4.4)]{mac:affine}) and  for any simple root $\alpha_j\in R(w_{\lambda})$ we have that
$w_\lambda s_j=w_{s_j\lambda}$ with $\ell (w_{s_j\lambda})=\ell( w_\lambda )-1$.
Given $\lambda\in P^+$, $\nu\in W_0\omega$ and $\mu=s_j(\lambda-\nu)$ with $\alpha_j\in R(w_{\lambda-\nu})$, we are in one of the following three cases:
\begin{itemize}
\item[$(A)$]$\langle\lambda,\alpha_j^\vee\rangle =0$ and $\langle\nu,\alpha_j^\vee\rangle=1$. Then $\mu=\lambda-s_j\nu\in \lambda-W_0\omega$ and  $\theta(\lambda-\nu)=\theta(\mu)$. (Notice that $s_j\in W_{0,\lambda}$.)
\item[$(B)$] $\langle \lambda,\alpha_j^\vee\rangle =0$ and $\langle \nu,\alpha_j^\vee\rangle =2$. Then $\mu=\lambda-s_j\nu=\lambda+\alpha_j\in \lambda-W_0\omega$ and $\theta(\lambda-\nu)=\theta(\mu)+1$. (Notice that $s_j\in W_{0,\lambda}$ and $\nu=\alpha_j$.)
\item[$(C)$] $\langle \lambda,\alpha_j^\vee\rangle =1$ and $\langle\nu,\alpha_j^\vee\rangle=2$.  Then
$\mu=\lambda$ and $\theta(\lambda-\nu)=\theta (\mu)=0$. (Notice that
$w_{\lambda-\nu}=s_j$ and $\nu=\alpha_j$.)
\end{itemize}
It is moreover evident that in the Cases $(B)$ and $(C)$, which occur only when $\omega$ is quasi-minuscule, one has that
$q_j=q_0$ (since $\alpha_j\in W_0\omega$ with $\omega=\alpha_0$).

\begin{proof}[Proof of Lemma \ref{tlem1}]
It is sufficient to restrict attention to the case that
$\lambda -\nu\not\in P^+$ (as for $\lambda-\nu\in P^+$ the lemma is trivial). For a reduced
decomposition
$w_{\lambda-\nu}=s_{j_\ell}\cdots s_{j_1}$ with $\ell =\ell (w_{\lambda-\nu})\geq 1$, we write
$$\nu_k:= s_{j_{k}}\cdots s_{j_1} \nu\quad  \text{for}\quad k=0,\ldots ,\ell$$ and
$$\beta_k:= s_{j_1}\cdots s_{j_{k}}\alpha_{j_{k+1}}\quad  \text{for}\quad k=0,\ldots ,\ell-1$$ (with the conventions that
$\nu_0:=\nu$ and $\beta_0:=\alpha_{j_1}$). This means that $$R(w_{\lambda-\nu})=\{ \beta_0,\ldots ,\beta_{\ell-1}\}$$ (cf. \cite[(2.2.9)]{mac:affine}).  It is immediate from the Observations $(A)$-$(C)$ above that the minimal sequence of weights taking $\lambda-\nu$ to $(\lambda-\nu)_+$ by successive application of the simple reflections
in our reduced decomposition of $w_{\lambda-\nu}$ is either of the form
(Situation $(i)$):
\begin{equation}\label{chain1}
\lambda-\nu=\lambda-\nu_0\stackrel{s_{j_1}}{\longrightarrow}\lambda-\nu_1\stackrel{s_{j_2}}{\longrightarrow}\cdots  \stackrel{s_{j_{\ell-1}}}{\longrightarrow}
\lambda-\nu_{\ell-1} \stackrel{s_{j_\ell}}{\longrightarrow} \lambda-\nu_\ell =(\lambda-\nu)_+ ,
\end{equation}
or of the form (Situation $(ii)$):
\begin{equation}\label{chain2}
\lambda-\nu=\lambda-\nu_0\stackrel{s_{j_1}}{\longrightarrow}\lambda-\nu_1\stackrel{s_{j_2}}{\longrightarrow}\cdots  \stackrel{s_{j_{\ell-1}}}{\longrightarrow}
\lambda-\nu_{\ell-1} \stackrel{s_{j_\ell}}{\longrightarrow} \lambda =(\lambda-\nu)_+ ,
\end{equation}
because Case $(C)$ can at most occur
at the last step:  $\lambda-\nu_{\ell-1} \stackrel{s_{j_\ell}}{\longrightarrow}  (\lambda-\nu)_+$ (as
this case takes us back to $P^+$).
In Situation $(i)$ (i.e. Case $(C)$ does not occur at the last step) we have that
$$w_{\lambda-\nu}\in W_{0,\lambda}\quad\text{and}\quad (\lambda-\nu)_+\neq \lambda ,$$ whereas in Situation $(ii) $ (i.e. Case $(C)$ does occur at the last step) we have that
 $$s_{j_\ell}w_{\lambda-\nu}=s_{j_{\ell-1}}\cdots s_{j_1}\in W_{0,\lambda},\quad q_{j_\ell}=q_0\quad\text{and}\quad
 (\lambda-\nu)_+=\lambda .$$ Moreover, in the latter situation
 $\nu_{\ell -1}=\alpha_{j_\ell}$, i.e.
 $w_{\lambda-\nu}\nu=-\alpha_{j_\ell}$ and
 $$\nu =(s_{j_{\ell}}w_{\lambda-\nu})^{-1}\alpha_{j_\ell}=s_{j_1}\cdots s_{j_{\ell-1}}\alpha_{j_\ell}=\beta_{\ell-1}\in
 R(w_{\lambda-\nu})\setminus R(s_{j_\ell}w_{\lambda-\nu}).$$ It remains to compute $\theta (\lambda-\nu )$. Since $\theta ((\lambda-\nu)_+)=0$, it is clear from the Observations $(A)$-$(C)$ that $\theta (\lambda-\nu )$ is equal to the number of
 times Case $(B)$ occurs in the above sequences, i.e. the number of times that
 $$\langle \nu_k,\alpha_{j_{k+1}}^\vee \rangle =2\quad \text{for}\quad k=0,\ldots ,\ell^\prime -1,$$ with $\ell^\prime =\ell$ in Situation $(i)$ and $\ell^\prime=\ell -1$ in Situation $(ii)$. Since for $k=0,\ldots ,\ell^\prime -1$: $$\langle \nu_k,\alpha_{j_{k+1}}^\vee \rangle =2\Leftrightarrow\langle \nu ,\beta_k^\vee \rangle =2\Leftrightarrow \nu=\beta_k,$$
 it is clear that in Situation $(i)$ $\theta (\lambda-\nu)$ is equal to $0$ or $1$ depending whether $\nu\not\in R(w_{\lambda-\nu})$ or $\nu\in R(w_{\lambda-\nu})$, respectively, and in
 Situation $(ii)$ $\theta (\lambda-\nu)=0$ (because now $\nu=\beta_{\ell^{\prime}}$).
 \end{proof}

\begin{lemma}\label{tlem2}
For $\lambda\in P^+$ and $\nu\in W_0\omega$, the following explicit formula holds
  \begin{equation}
q_{w_{\lambda-\nu}}(I_{w_{\lambda-\nu}} f)((\lambda-\nu)_+)=
 f(\lambda-\nu) -  \theta (\lambda-\nu ) (1-q_0^{-2}) e_q(\nu) f(\lambda) .
\end{equation}
\end{lemma}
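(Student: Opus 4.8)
The plan is to prove the formula by induction on $\ell(w_{\lambda-\nu})$, peeling off the \emph{first} simple reflection in a reduced decomposition $w_{\lambda-\nu}=s_{j_\ell}\cdots s_{j_1}$ and exploiting the step-by-step trichotomy (Cases $(A)$, $(B)$, $(C)$) recorded just before the proof of Lemma \ref{tlem1}, together with the explicit action of the $I_j$ in Eqs. \eqref{int-op1}, \eqref{int-op2}. The base case $\ell(w_{\lambda-\nu})=0$ is immediate: then $\lambda-\nu\in P^+$, so $(\lambda-\nu)_+=\lambda-\nu$, $I_{w_{\lambda-\nu}}$ is the identity, $q_{w_{\lambda-\nu}}=1$ and $\theta(\lambda-\nu)=0$, whence both sides equal $f(\lambda-\nu)$.

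For the inductive step write $\alpha_{j_1}=\beta_0\in R(w_{\lambda-\nu})$, set $\nu_1:=s_{j_1}\nu\in W_0\omega$, and put $w':=w_{\lambda-\nu}s_{j_1}$. By the elementary fact (recalled in the proof of Lemma \ref{tlem1}) that $w_\mu s_j=w_{s_j\mu}$ with decreasing length when $\alpha_j\in R(w_\mu)$, one has $w'=w_{\lambda-\nu_1}$, of length $\ell-1$, and moreover $(\lambda-\nu_1)_+=(\lambda-\nu)_+$. Since the decomposition is reduced, $T_{w_{\lambda-\nu}}=T_{w'}T_{j_1}$, hence $I_{w_{\lambda-\nu}}=I_{w'}I_{j_1}$ and $q_{w_{\lambda-\nu}}=q_{w'}q_{j_1}$. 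Applying the induction hypothesis to the point $\lambda-\nu_1$ and the function $g:=I_{j_1}f$ therefore yields
\[
q_{w_{\lambda-\nu}}(I_{w_{\lambda-\nu}}f)((\lambda-\nu)_+)=q_{j_1}\Bigl(g(\lambda-\nu_1)-\theta(\lambda-\nu_1)(1-q_0^{-2})e_q(\nu_1)\,g(\lambda)\Bigr).
\]

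It remains to evaluate $g=I_{j_1}f$ at $\lambda-\nu_1$ and at $\lambda$. Since $s_{j_1}(\lambda-\nu_1)=\lambda-\nu$ and, in Cases $(A)$ and $(B)$, $\langle\lambda,\alpha_{j_1}^\vee\rangle=0$ so that $s_{j_1}\lambda=\lambda$, a short computation of the relevant $\alpha_{j_1}$-strings gives $g(\lambda)=q_{j_1}f(\lambda)$ together with $q_{j_1}g(\lambda-\nu_1)=f(\lambda-\nu)$ in Case $(A)$ and $q_{j_1}g(\lambda-\nu_1)=f(\lambda-\nu)-(q_0^2-1)f(\lambda)$ in Case $(B)$ (where $q_{j_1}=q_0$). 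In Case $(A)$ one has $\theta(\lambda-\nu_1)=\theta(\lambda-\nu)$ and $\langle\nu,\alpha_{j_1}^\vee\rangle=1$; feeding in the elementary transformation rule $e_q(s_j\mu)=e_q(\mu)\,q_{\alpha_j}^{-2\langle\mu,\alpha_j^\vee\rangle}$ (so $e_q(\nu_1)=e_q(\nu)q_{j_1}^{-2}$) makes all powers of $q_{j_1}$ cancel and collapses the right-hand side to $f(\lambda-\nu)-\theta(\lambda-\nu)(1-q_0^{-2})e_q(\nu)f(\lambda)$. Case $(C)$ forces, by Lemma \ref{tlem1}, $\ell(w_{\lambda-\nu})=1$ with $(\lambda-\nu)_+=\lambda$ and $\theta(\lambda-\nu)=0$, and is handled directly: here the $\alpha_{j_1}$-string reduces to the single point $\lambda-\alpha_{j_1}=\lambda-\nu$, so $q_{w_{\lambda-\nu}}(I_{w_{\lambda-\nu}}f)(\lambda)=q_{j_1}(I_{j_1}f)(\lambda)=f(\lambda-\nu)$, as required.

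The main obstacle is closing the bookkeeping in Case $(B)$, where the correction term does not propagate naively. The resolution is to observe that Case $(B)$ occurs in Situation $(i)$ with $\nu=\beta_0\in R(w_{\lambda-\nu})$, so that $\theta(\lambda-\nu)=1$, whereas $\nu_1=-\alpha_{j_1}\notin R^+$ forces $\theta(\lambda-\nu_1)=0$. The vanishing of $\theta(\lambda-\nu_1)$ annihilates the induction hypothesis's correction term, leaving $f(\lambda-\nu)-(q_0^2-1)f(\lambda)$, and this matches the asserted formula precisely because $\theta(\lambda-\nu)(1-q_0^{-2})e_q(\nu)=(1-q_0^{-2})q_0^2=q_0^2-1$, using the identity $e_q(\alpha_j)=q_{\alpha_j}^2$ (itself a consequence of $e_q(s_j\mu)=e_q(\mu)q_{\alpha_j}^{-2\langle\mu,\alpha_j^\vee\rangle}$ applied to $\mu=\alpha_j$). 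The fact that $\theta$ takes only the values $0$ and $1$ and that the unique ``special'' step in the chains \eqref{chain1}, \eqref{chain2} is either a Case $(B)$ or a terminal Case $(C)$ step is exactly what guarantees the induction terminates cleanly.
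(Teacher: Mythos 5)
Your proof is correct and follows essentially the same route as the paper's: induction on $\ell(w_{\lambda-\nu})$, peeling off a length-reducing simple reflection, splitting into Cases $(A)$, $(B)$, $(C)$ via the explicit action \eqref{Ijact} of $I_j$, and closing Case $(B)$ with $e_q(\alpha_j)=q_j^2=q_0^2$. Your justification that $\theta(\lambda-\nu_1)=0$ in Case $(B)$ (via $\nu_1=-\alpha_{j_1}\in R^-$ and Lemma \ref{tlem1}) is only a cosmetic variant of the paper's inequality $0\leq\theta(\lambda-s_j\nu)<\theta(\lambda-\nu)\leq 1$, so nothing further is needed.
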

The proof exploits the elementary identities (for $f\in C(P)$, $\lambda\in P$, $j=1,\ldots, n$)
\begin{equation}\label{Ijact}
q_j(I_jf)(\lambda )
 =
 \begin{cases}
 f(s_j\lambda ) =f(\lambda-\alpha_j) &\text{if}\ \langle \lambda ,\alpha_j^\vee\rangle =1 \\
f(\lambda-2\alpha_j) +(1-q_j^2)f(\lambda-\alpha_j) &\text{if}\ \langle \lambda ,\alpha_j^\vee\rangle = 2
\end{cases}
\end{equation}
and $q_j^{-1}(I_j)(s_j\lambda)=f(\lambda )$ if $\langle \lambda ,\alpha_j^\vee\rangle =0$ (cf. Eq. \eqref{stable2}).

\begin{proof}[Proof of Lemma \ref{tlem2}]
The proof  of the lemma employs induction on $\ell (w_{\lambda-\nu})$ starting from the trivial base
$\lambda-\nu\in P^+$. Let $\ell (w_{\lambda-\nu})>1$ and $s_j$ ($1\leq j\leq n$) such that
$$\ell (w_{\lambda-\nu}s_j)=\ell (w_{\lambda-\nu})-1$$ (i.e. $\alpha_j\in R(w_{\lambda-\nu})$).
From the observations following the statement of Lemma \ref{tlem1} it is clear that
$w_{\lambda-\nu}s_j=w_{s_j(\lambda-\nu)}$  with either
$s_j(\lambda -\nu)=\lambda-s_j\nu$ (Cases $(A)$ and $(B)$) or
$s_j(\lambda -\nu)=\lambda (\in P^+)$  (Case $(C)$). In the latter case $w_{\lambda-\nu}=s_j$ and the statement
of the lemma reduces to the first case of  Eq. \eqref{Ijact} (with $\lambda$ replaced by $\lambda-\nu$).
Moreover, in the Cases $(A)$ and $(B)$  invoking of the induction hypothesis yields
\begin{eqnarray}
q_{w_{\lambda-\nu}}(I_{w_{\lambda-\nu}} f)((\lambda-\nu)_+)=
q_{w_{\lambda-s_j\nu}} q_j (I_{w_{\lambda-s_j\nu}} I_jf)((\lambda-s_j\nu)_+) \nonumber \\ =
q_j(I_j f) (\lambda-s_j\nu) - q_j \theta (\lambda-s_j\nu ) (1-q_0^{-2}) e_q(s_j\nu) (I_jf)(\lambda)
\end{eqnarray}
(where we have used that $(\lambda-s_j\nu)_+=(\lambda-\nu)_+$).
In Case $(A)$, one has that $$q_j(I_j f) (\lambda-s_j\nu) =f(\lambda -\nu)$$ (by the first case of Eq. \eqref{Ijact}
with $\lambda$ replaced by $\lambda-s_j\nu$) and $$(I_jf)(\lambda )=q_j f(\lambda )$$ (as $s_j\in W_{0,\lambda}$),
which completes the induction step for this situation upon observing that $\theta (\lambda-s_j\nu)=\theta (\lambda-\nu)$,
$e_q(s_j\nu)=e_q(\nu)q_j^{-2\langle \nu,\alpha_j^\vee\rangle}=e_q(\nu)q_j^{-2}$.
In Case $(B)$ we have that $\theta (\lambda-s_jv)=0$ (since $0\leq\theta (\lambda-s_jv)<\theta (\lambda-\nu)\leq 1$
(cf. Lemma \ref{tlem1})) and $$q_j(I_j f) (\lambda-s_j\nu) =f(\lambda-\nu)-q_0^2(1-q_0^{-2})f(\lambda )$$ (by the second case of Eq. \eqref{Ijact} with $\lambda$ replaced by $\lambda-s_j\nu$ and the fact that
$q_j=q_0$), which completes the induction step for this situation
 upon observing that $\theta (\lambda-\nu)=1$ and
$e_q(\nu)=e_q(\alpha_j)=q_j^2=q_0^2$ (as $e_q(\alpha_j)= e_q(-\alpha_j)
q_j^{2\langle \alpha_j,\alpha_j^\vee\rangle}=e_q(-\alpha_j) q_j^4$).
\end{proof}

We are now in the position to verify Eq. \eqref{qm2} by making the action of the operator on the LHS explicit:
\begin{align*}
(\epsilon ^{-1}\mathcal J  I_{w_o}f)(\lambda-\nu)\stackrel{\text{Eq.}~\eqref{Pop}}{=}&
q_{w_o} q_{w_{\lambda-\nu}}^{-1}(I_{w_{\lambda-\nu}}f)((\lambda-\nu)_+) \\
\stackrel{\text{Eq.}~\eqref{stable2}}{=}& q_{w_o} q_{w_{w_\lambda (\lambda-\nu)}w_\lambda}^{-1}(I_{w_{w_\lambda (\lambda-\nu)}w_\lambda}f)((\lambda-\nu)_+)  .
\end{align*}
For $(\lambda-\nu)_+\neq\lambda_+$,
Lemma \ref{tlem1} (with $\lambda$ and $\nu$ replaced by
$\lambda_+$ and $w_\lambda\nu$) ensures that $w_{w_\lambda (\lambda-\nu)}\in W_{0,\lambda_+}$, whence
$$\ell (w_{w_\lambda (\lambda-\nu)}w_\lambda)=\ell (w_{w_\lambda (\lambda-\nu)})+\ell (w_\lambda)$$
and we may rewrite the expression in question as
\begin{eqnarray*}
\lefteqn{q_{w_o} q_{w_{w_\lambda (\lambda-\nu)}w_\lambda}^{-1}(I_{w_{w_\lambda (\lambda-\nu)}} I_{w_\lambda}f)((\lambda-\nu)_+)
 \stackrel{\text{Lem.}~\ref{tlem2}}{=} }  && \\
 && a_{\lambda ,\nu}^{-1}  q_{w_o}q_{w_\lambda} ^{-1}\left(
(I_{w_\lambda}f)(w_\lambda (\lambda -\nu) )
 -\theta(w_\lambda (\lambda-\nu)) (1-q_0^{-2}) e_q(w_\lambda \nu)
(I_{w_\lambda} f)(\lambda_+)
\right)  ,
\end{eqnarray*}
which proves Eq. \eqref{qm2} when $(\lambda-\nu)_+\neq\lambda_+$.
Similarly, for $(\lambda-\nu)_+= \lambda_+$ we rewrite the expression under consideration as
\begin{eqnarray*}
\lefteqn{ q_{w_o} q_{w_{w_\lambda (\lambda-\nu)}w_\lambda}^{-1}(I_{w_{w_\lambda (\lambda-\nu)}w_\lambda}f)(\lambda_+)
 \stackrel{\text{Eq.}~\eqref{Hrela}}{=} }  && \\
  && q_{w_o} q_{w_{w_\lambda (\lambda-\nu)}w_\lambda}^{-1}\left(
(I_jI_{s_jw_{w_\lambda (\lambda-\nu)} w_\lambda}f)(\lambda_+)  \right. \\
&& \left.  -\chi((s_jw_{w_\lambda (\lambda-\nu)} w_\lambda)^{-1}\alpha_j)  (q_j-q_j^{-1})
(I_{s_jw_{w_\lambda (\lambda-\nu)}w_\lambda} f)(\lambda_+)
\right)   \\
&&= a_{\lambda ,\nu}^{-1}  q_{w_o}q_{w_\lambda} ^{-1}\left(
(I_{w_\lambda}f)(w_\lambda (\lambda -\nu) )
 -\chi(\nu)(1-q_0^{-2})q_{w_{w_\lambda (\lambda-\nu)}}^2
(I_{w_\lambda} f)(\lambda_+)
\right) .
\end{eqnarray*}
In the last step it was used that for $j$ chosen as in Lemma \ref{tlem1} (with $\lambda$ and $\nu$ replaced by
$\lambda_+$ and $w_\lambda\nu$), one has that
$$(s_jw_{w_\lambda (\lambda-\nu)} w_\lambda)^{-1}\alpha_j=\nu ,\quad
s_jw_{w_\lambda (\lambda-\nu)}\in W_{0,\lambda_+},\quad \theta (w_\lambda (\lambda-\nu ))=0,$$ and $q_j=q_0$.
It thus follows for the first term that
\begin{eqnarray*}
\lefteqn{(I_jI_{s_jw_{w_\lambda (\lambda-\nu)} w_\lambda}f)(\lambda_+)=}&& \\
 &&(I_{w_{w_\lambda (\lambda-\nu)} }I_{ w_\lambda}f)((\lambda-\nu)_+) \stackrel{\text{Lem.}~\ref{tlem2}}{=} q_{w_{w_\lambda (\lambda-\nu)} }^{-1}(I_{w_\lambda}f)(w_\lambda (\lambda -\nu) )
 \end{eqnarray*}
and for the second term that
$$(I_{s_jw_{w_\lambda (\lambda-\nu)}w_\lambda} f)(\lambda_+)\stackrel{\text{Eq.}~\eqref{stable2}}{=}
q_{w_{w_\lambda (\lambda-\nu)}}q_0^{-1} (I_{w_\lambda} f)(\lambda_+),$$
where we have exploited that $$\ell (s_jw_{w_\lambda (\lambda-\nu)}w_\lambda)=\ell (s_jw_{w_\lambda (\lambda-\nu)})+\ell(w_\lambda)=
\ell (w_{w_\lambda (\lambda-\nu)})+\ell(w_\lambda)-1.$$
The case $(\lambda-\nu)_+= \lambda_+$ of the identity in Eq. \eqref{qm2} now follows from the fact that
$q_{w_{w_\lambda (\lambda-\nu)}}^2=e_q(w_\lambda \nu)$.
Indeed, for any $w\in W_0$ and $\mu\in P$  one has that
\begin{subequations}
\begin{equation}\label{(i)a}
\langle w^{-1}\mu,\rho^\vee\rangle=\langle \mu,\rho^\vee\rangle+\sum_{\alpha\in R(w)}\langle w^{-1}\mu,\alpha^\vee\rangle
\end{equation} and
\begin{equation}\label{(i)b}
e_q(w^{-1}\mu)=e_q(\mu)\prod_{\alpha\in R(w)}q_\alpha^{2\langle w^{-1}\mu,\alpha^\vee\rangle}
\end{equation}
\end{subequations}
(cf. \cite[\text{Eq.}~(1.5.3)]{mac:affine}),
and
\begin{equation}\label{(ii)}
q_w=\prod_{\alpha\in R(w)} q_\alpha
\end{equation}
(cf. Eq. \eqref{reconstruct}).
Lemma \ref{tlem1} (with $\lambda$ and $\nu$ replaced by $\lambda_+$ and $w_\lambda \nu$) and Property \eqref{(i)a}, \eqref{(i)b} with
$\mu=\alpha_j=-w_{w_\lambda (\lambda-\nu)}w_\lambda\nu$ and $w=s_jw_{w_\lambda (\lambda-\nu)}$ entail that
\begin{equation}\label{eqw}
e_q(w_\lambda\nu)=q_0^2\prod_{\alpha\in R(s_jw_{w_\lambda (\lambda-\nu)})}
q_\alpha^{2\langle w_\lambda\nu,\alpha^\vee\rangle}
\end{equation}
and
\begin{eqnarray}\label{lest}
\ell (w_{w_\lambda(\lambda-\nu)})&\stackrel{\theta (w_\lambda (\lambda-\nu))=0}{=}&\langle w_\lambda\nu,\rho^\vee\rangle \\
& =&1+\sum_{\alpha\in R(s_jw_{w_\lambda (\lambda-\nu)})} \langle w_\lambda\nu,\alpha^\vee\rangle=1+\ell(s_jw_{w_\lambda (\lambda-\nu)}).\nonumber
\end{eqnarray}
Since $\langle w_\lambda\nu,\alpha^\vee\rangle\leq 1$
for $\alpha\in R(s_jw_{w_\lambda (\lambda-\nu)})$ in view of Lemma \ref{tlem1}, it follows from Eq. \eqref{lest} that
in fact $\langle w_\lambda\nu,\alpha^\vee\rangle= 1$
for $\alpha\in R(s_jw_{w_\lambda (\lambda-\nu)})$.  We thus conclude from Eq. \eqref{eqw} that
$$e_q(w_\lambda\nu)=q_0^2\prod_{\alpha\in R(s_jw_{w_\lambda (\lambda-\nu)})}
q_\alpha^{2}\stackrel{\text{Lem.}~\ref{tlem1}}{=}\prod_{\alpha\in R(w_{w_\lambda (\lambda-\nu)})}
q_\alpha^{2} \stackrel{\text{Eq}.~\eqref{(ii)}}{=} q_{w_{w_\lambda (\lambda-\nu)}}^2.$$

\subsection{Proof of Corollary \ref{action-symmetric:cor}}
It is immediate from Theorem \ref{action:thm} that the action of $\widehat{m_\omega (Y)}$ reduces to an
action on
$C(P)^{W_0}\cong C(P^+)$ of the form in Eq. \eqref{Msa} with
\begin{align}\nonumber
V_{\lambda ,-\nu}(q^2)&= q_{t_\lambda}q_{t_{\lambda -\nu}}^{-1}
\sum_{\substack{\nu^\prime\in W_0\omega\\ (\lambda -\nu^\prime)_+=\lambda-\nu}} q_{w_{\lambda-\nu^\prime}}^2
\stackrel{\text{Lem.}~\ref{tlem1}}{=}
e_q(\nu)\sum_{\mu\in W_{0,\lambda} (\lambda -\nu)} q_{w_\mu}^2\\
&=
e_q(\nu) W_{0,\lambda}^{\lambda -\nu}(q^2)=
e_q(\nu)W_{0,\lambda}(q^2)/(W_{0,\lambda}\cap W_{0,\lambda-\nu})(q^2) \label{Valt}
\end{align}
and
\begin{eqnarray}\nonumber
U_{\lambda ,-\omega}(q^2)&=&
\sum_{\substack{\nu\in W_0\omega \\ (\lambda-\nu)_+=\lambda   }} q_{w_{\lambda-\nu}}^2+
(1-q_0^{-2})
\sum_{\nu\in W_0\omega } \varepsilon_{\lambda ,\nu} e_q(\nu)\\
&\stackrel{\text{Lem.}~\ref{tlem1}}{=}&\!\!\!
\sum_{\substack{\nu\in W_0\omega \\ (\lambda-\nu)_+=\lambda   }} q_{w_{\lambda-\nu}}^2+
(1-q_0^{-2})
\sum_{\substack{\nu\in W_0\omega \\ w_{\lambda-\nu} \lambda=\lambda   }} \theta (\lambda -\nu )e_q(\nu) . \label{Ualt}
\end{eqnarray}
This proves Corollary \ref{action-symmetric:cor}
with $V_{\lambda ,-\nu}(q^2)$ and $U_{\lambda ,-\omega}(q^2)$ given by Eqs. \eqref{Valt} and \eqref{Ualt}, respectively.
The coefficient $V_{\lambda ,\nu}(q^2)$ can be recasted in the form given by Eq. \eqref{Msb} upon invoking
Macdonald's product formula \eqref{poincare} and the coefficient $U_{\lambda ,\omega}(q^2)$ can be rewritten
in the form given by Eq. \eqref{Msc} upon comparing the corresponding Pieri formula of the form in Corollary \ref{pieri:cor}
with \cite[\text{Eqs.}~(2.3a)-(2.3c)]{die-ems:pieri}.

\appendix

\section{Braid relation for $A_2$}\label{appA}
In this appendix we verify the braid relation \eqref{br} for the root system $A_2$ via a direct computation. For the root systems $B_2$ and $G_2$ the corresponding computation is analogous (though increasingly tedious).

For $R=A_2$ the braid relation reads:
\begin{subequations}
\begin{equation}\label{A2braid}
\hat T_1 \hat T_2 \hat T_1=    \hat T_2 \hat T_1 \hat T_2 ,
\end{equation}
with
\begin{equation}
\hat T_1=q +\chi_1 (s_1 -1), \qquad \hat T_2=q +\chi_2 (s_2 -1).
\end{equation}
\end{subequations}
Multiplication of the product
\begin{equation*}\label{braid1}
 \hat T_1 \hat T_2= q^2+q\chi_1(s_1-1)+q\chi_2(s_2-1)+\chi_1(s_1-1) \chi_2(s_2-1)
\end{equation*}
from the right by $\hat T_1$ produces
\begin{eqnarray*} \label{braid2}
\lefteqn{\hat T_1 \hat T_2 \hat T_1
=} && \\
&& q^3+2 q^2\chi_1(s_1-1)+q^2\chi_2(s_2-1)+\chi_1(s_1-1) \chi_2(s_2-1) \chi_1(s_1-1)\nonumber \\
&& +q\chi_1(s_1-1)\chi_1(s_1-1)+q\chi_1(s_1-1)\chi_2(s_2-1)+q \chi_2(s_2-1)\chi_1(s_1-1) .\nonumber
\end{eqnarray*}
Swapping the indices $1$ and $2$ yields a corresponding formula for the product $\hat T_2 \hat T_1 \hat T_2$. By comparing both formulas it is seen that the braid relation \eqref{A2braid} amounts to the following identity:
\begin{multline}  \label{braid}
q^2\chi_1(s_1-1) +q \chi_1(s_1-1) \chi_1(s_1-1) +  \chi_1(s_1-1) \chi_2(s_2-1) \chi_1(s_1-1)\\
=
q^2\chi_2(s_2-1) +q \chi_2(s_2-1) \chi_2(s_2-1)  + \chi_2(s_2-1) \chi_1(s_1-1) \chi_2(s_2-1) .
\end{multline}
Upon acting with both sides of Eq. \eqref{braid} on an arbitrary function $f:P\to\mathbb{C}$, it is sufficient to
verify the resulting equality evaluated at the points of a finite $W_0$-invariant set of weights representing the facets of the Coxeter complex for $W_0$ (in view of Lemma \ref{facets:lem}). A convenient choice for such a set of facet representatives
is displayed in Figure \ref{fig1} and the corresponding values confirming the equality of both sides of the identity at these points are collected in Figure \ref{fig2}, where
\begin{align}\label{f0}
f_0:=&f(\omega_1+\omega_2)+f(\omega_1-2\omega_2)+f(-2\omega_1+\omega_2) \\ &-f(-\omega_1-\omega_2)-f(-\omega_1+2\omega_2)-f(2\omega_1-\omega_2).\nonumber
\end{align}

\begin{figure}%[htb]
\begin{center}
\includegraphics[scale=0.7]{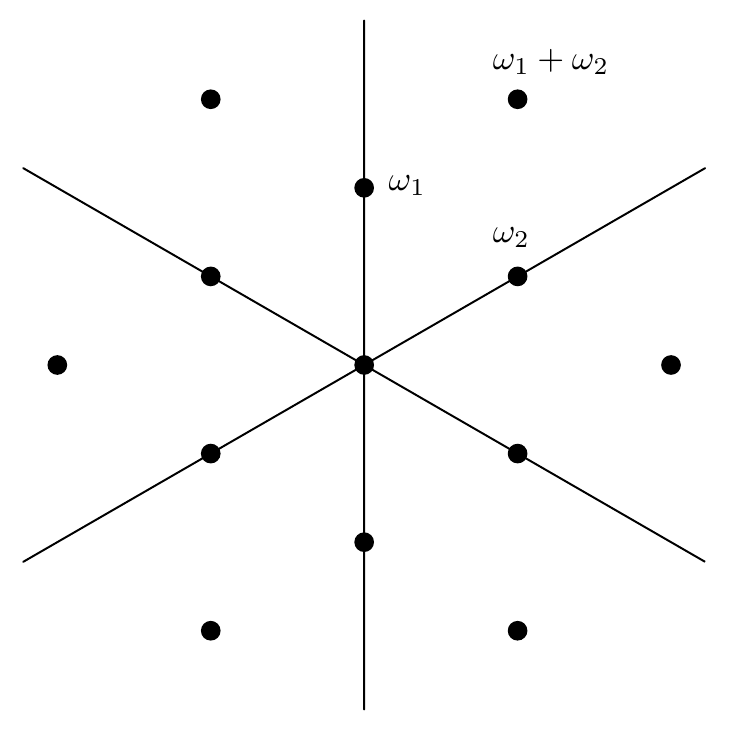}
\end{center}
\caption{The set $W_0\{0,\omega_1,\omega_2,\omega_1+\omega_2\}$ consisting of 13 weights representing the facets of the Coxeter complex of $W_0$ for $R=A_2$.}\label{fig1}
\end{figure}

\begin{figure}\centering
\begin{tabular}{c|c}
$\lambda$ & $\text{LHS}=\text{RHS}$ \\ \hline
 $0, \pm \omega_1, \pm\omega_2,\pm(\omega_1-\omega_2)$  & $0$ \\
 $\omega_1+\omega_2$  & $-q^{-3}f_0$ \\
 $-\omega_1-\omega_2$  &  $q^3 f_0$\\
$\omega_1-2\omega_2$,  $-2\omega_1+\omega_2$    &  $-qf_0$\\
 $-\omega_1+2\omega_2$,  $2\omega_1-\omega_2$   & $q^{-1}f_0$
\end{tabular}
\caption{Values of both sides of Eq. \eqref{braid} upon acting on an arbitrary function
$f:P\to\mathbb{C}$ and evaluation at the points $\lambda$ of $W_0\{0,\omega_1,\omega_2,\omega_1+\omega_2\}$. (Here $f_0$ is given by Eq. \eqref{f0}.)}\label{fig2}
\end{figure}

\section{Affine intertwining relations}\label{appB}
In this appendix we prove the affine intertwining relations in Lemma \ref{affine-intertwining:lem} (therewith completing the proof of the intertwining property in Theorem \ref{intertwining:thm}).

\subsection{Preparations: some properties related to (quasi-)minuscule weights}
The proof of the affine intertwining relations is based on properties of certain special elements in $W$ and $\mathcal{H}$ associated with the minuscule and quasi-minuscule weights. Let us recall in this connection that the minuscule weights $\omega$ are characterized by the property that $0\leq \langle \omega,\alpha^\vee\rangle \leq 1$ for all $\alpha\in R^+$, whereas the quasi-minuscule weight $\omega =\alpha_0$ is characterized
by the property that
$0\leq\langle \omega,\alpha^\vee\rangle \leq 2$ for all $\alpha\in R^+$ with the upper bound $2$ being reached only {\em once} (viz. for $\alpha=\alpha_0$).

\begin{lemma}\label{weights:lem}
Let $\mu\in P$.
\begin{itemize}
\item[(i)] If $\omega\in P^+$ is minuscule, then $\mu_+ +w_\mu\omega\in P^+$.
\item[(iia)]  If $\mu_+ +w_\mu \alpha_0\not\in P^+$, then $w_\mu\alpha_0=-\alpha_j$ for some $1\leq j\leq n$ and moreover
$\langle \mu_+,\alpha_j^\vee\rangle =1$.
\item[(iib)] If $\mu_+ +w_\mu \alpha_0\in P^+$ with $w_\mu \alpha_0\in R^-$, then $\langle \mu_+ ,w_\mu\alpha_0^\vee\rangle \leq -2$.
\end{itemize}
\end{lemma}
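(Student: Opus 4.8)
The plan is to treat all three parts uniformly by expanding the pairing $\langle \mu_+ + w_\mu\omega,\gamma^\vee\rangle$ against an arbitrary positive coroot $\gamma^\vee$ ($\gamma\in R^+$), and then specializing the available bounds on $\langle\omega,\cdot^\vee\rangle$ to the minuscule and quasi-minuscule cases. The structural input I would invoke throughout is the description $R(w_\mu)=\{\alpha\in R^+\mid\langle\mu,\alpha^\vee\rangle<0\}$ recalled in the proof of Theorem~\ref{action:thm} (cf. \cite[Eq.~(2.4.4)]{mac:affine}), together with the elementary identities $\langle\mu_+,\gamma^\vee\rangle=\langle\mu,(w_\mu^{-1}\gamma)^\vee\rangle$ and $\langle w_\mu\omega,\gamma^\vee\rangle=\langle\omega,(w_\mu^{-1}\gamma)^\vee\rangle$, which follow from $\mu_+=w_\mu\mu$ and the orthogonality of the $W_0$-action. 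First I would split on the sign of $w_\mu^{-1}\gamma$: if $w_\mu^{-1}\gamma\in R^+$ both summands are $\geq 0$ since $\mu_+$ and $\omega$ are dominant; if $w_\mu^{-1}\gamma=-\beta$ with $\beta\in R(w_\mu)$, then $\langle\mu_+,\gamma^\vee\rangle=-\langle\mu,\beta^\vee\rangle\geq 1$ while $\langle w_\mu\omega,\gamma^\vee\rangle=-\langle\omega,\beta^\vee\rangle$.

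For part (i) this immediately finishes the argument: minusculity gives $\langle\omega,\beta^\vee\rangle\leq 1$, so in the second case the two contributions sum to at least $1-1=0$, and hence $\langle\mu_++w_\mu\omega,\gamma^\vee\rangle\geq 0$ for every $\gamma\in R^+$, i.e. $\mu_++w_\mu\omega\in P^+$.

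For the quasi-minuscule weight $\omega=\alpha_0$ I would run the same dichotomy. Non-dominance of $\mu_++w_\mu\alpha_0$ in a direction $\gamma$ forces $\langle\mu_+,\gamma^\vee\rangle-\langle\alpha_0,\beta^\vee\rangle<0$ with $\langle\mu_+,\gamma^\vee\rangle\geq 1$, so $\langle\alpha_0,\beta^\vee\rangle\geq 2$; the defining property of the quasi-minuscule weight (the value $2$ being attained only at $\alpha_0$) then pins $\beta=\alpha_0$, whence $\gamma=-w_\mu\alpha_0=:\alpha\in R^+$ is the unique non-dominant direction and $\langle\mu_+,\alpha^\vee\rangle=1$. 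Writing $\eta:=\mu_+-\alpha=\mu_++w_\mu\alpha_0$, this shows $\langle\eta,\alpha^\vee\rangle=-1$ while $\langle\eta,\gamma^\vee\rangle\geq 0$ for all other $\gamma\in R^+$, i.e. $R(w_\eta)=\{\alpha\}$. Since $\ell(w_\eta)=\abs{R(w_\eta)}=1$, the straightening element $w_\eta$ is a single simple reflection $s_j$, and comparing inversion sets $R(s_j)=\{\alpha_j\}$ gives $\alpha=\alpha_j$; this is exactly (iia), and re-derives both $w_\mu\alpha_0=-\alpha_j$ and $\langle\mu_+,\alpha_j^\vee\rangle=1$. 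Part (iib) is then the complementary bookkeeping: if $w_\mu\alpha_0=-\alpha\in R^-$ but $\mu_+-\alpha\in P^+$, then $\langle\mu_+-\alpha,\alpha^\vee\rangle\geq 0$ forces $\langle\mu_+,\alpha^\vee\rangle\geq 2$ (the value $1$ being excluded precisely because it would make $\mu_+-\alpha$ non-dominant, as in (iia)), equivalently $\langle\mu_+,(w_\mu\alpha_0)^\vee\rangle\leq -2$.

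The routine part is the pairing arithmetic; the only genuinely structural step, and the one I expect to require the most care, is the passage in (iia) from ``non-dominance occurs in a single root direction'' to ``that root is simple.'' I would phrase it cleanly through inversion sets: a weight failing dominance in exactly one direction $\alpha$ with $\langle\cdot,\alpha^\vee\rangle=-1$ has straightening element of length one, and an element of $W_0$ with a one-element inversion set is forced to be a simple reflection. Everything else reduces to the quasi-minuscule bound $\langle\alpha_0,\beta^\vee\rangle\leq 2$ with equality only at $\beta=\alpha_0$.
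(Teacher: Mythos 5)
Your proof is correct, and parts (i) and (iib) are in substance the paper's own arguments in different clothing: the paper also derives (iib) from the one-line computation $\langle \mu_+,w_\mu\alpha_0^\vee\rangle=\langle \mu_+ +w_\mu\alpha_0,w_\mu\alpha_0^\vee\rangle-2\leq -2$ (your parenthetical excluding the value $1$ is redundant, since pairing $\mu_+-\alpha$ against $\alpha^\vee$ itself already gives $\langle\mu_+,\alpha^\vee\rangle\geq 2$ outright), and in (i) your use of $R(w_\mu)=\{\alpha\in R^+\mid\langle\mu,\alpha^\vee\rangle<0\}$ encodes exactly the minimality fact the paper invokes directly, namely that $\langle\mu_+,\alpha_j^\vee\rangle=0$ forces $\ell(s_jw_\mu)=\ell(w_\mu)+1$, i.e. $w_\mu^{-1}\alpha_j\in R^+$. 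The genuine divergence is in (iia), and it stems from where dominance is tested. The paper pairs only against \emph{simple} coroots, so the failing direction is simple by fiat: non-dominance gives $\langle\mu_+ +w_\mu\alpha_0,\alpha_j^\vee\rangle<0$ for some $j$, the subcase $\langle\mu_+,\alpha_j^\vee\rangle=0$ is killed by the part-(i) minimality argument, and then $\langle w_\mu\alpha_0,\alpha_j^\vee\rangle=-2$ together with the equality case of quasi-minusculity yields $w_\mu\alpha_0=-\alpha_j$ in one stroke. You test against \emph{all} positive coroots, which buys you a uniqueness statement (the pairing of $\eta=\mu_+ +w_\mu\alpha_0$ is negative in exactly one direction) but obliges you to supply the extra step you rightly flagged: passing from ``$R(w_\eta)=\{\alpha\}$'' to ``$\alpha$ is simple'' via $\ell(w_\eta)=\lvert R(w_\eta)\rvert=1$, hence $w_\eta=s_j$ and $R(s_j)=\{\alpha_j\}$. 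That step is sound (the identity $\ell(w)=\lvert R(w)\rvert$ for $w\in W_0$ follows from Eq.~\eqref{length} with $\lambda=0$, and length-one elements of a Coxeter group are generators), so your route is a valid, slightly longer alternative; the paper's restriction to simple coroots is the more economical organization, while yours makes the inversion-set mechanics explicit and delivers the sharper picture $R(w_{\mu_+ +w_\mu\alpha_0})=\{\alpha_j\}$, which resonates with the bookkeeping later used in the proof of Theorem~\ref{action:thm}.
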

\begin{proof}
(i) For any $1\leq j\leq n$,  one has that
$\langle \mu_+ +w_\mu\omega,\alpha_j^\vee \rangle\geq \langle \mu_+ ,\alpha_j^\vee\rangle-1\geq -1$.  The statement now amounts to the observation that lower bound $-1$ cannot be reached. Indeed,
if $\langle \mu_+ ,\alpha_j^\vee \rangle =0$ then $s_j\in W_{\mu_+}$, whence
$\ell (s_jw_\mu )=\ell (w_\mu)+1$, i.e. $w_\mu^{-1}\alpha_j\in R^+$, and thus $
 \langle \mu_+ +w_\mu\omega,\alpha_j^\vee \rangle\geq \langle w_\mu\omega,\alpha_j^\vee \rangle\geq 0$.

(iia) By definition  the assumption implies that
$\langle \mu_+ +w_\mu \alpha_0,\alpha^\vee_j\rangle  <0$ for some $1\leq j\leq n$. Hence
$\langle w_\mu \alpha_0,\alpha^\vee_j \rangle =-2$  with $0\leq\langle \mu_+,\alpha_j^\vee\rangle \leq 1$ or
$\langle w_\mu \alpha_0,\alpha^\vee_j \rangle =-1$  with $\langle \mu_+,\alpha_j^\vee\rangle =0$. By repeating the argument of part (i), it is seen that $\langle \mu_+,\alpha_j^\vee\rangle $ cannot be zero.  It thus follows that
$\langle \mu_+,\alpha_j^\vee\rangle =1$ and that $\langle w_\mu \alpha_0,\alpha^\vee_j \rangle =-2$, i.e.
$w_\mu \alpha_0=-\alpha_j$.

(iib)  Immediate from the estimate $\langle \mu_+,w_\mu\alpha_0^\vee\rangle=
\langle \mu_+ +w_\mu\alpha_0,w_\mu\alpha_0^\vee\rangle-2\leq -2$.
\end{proof}

\begin{lemma}[\cite{mac:affine}]\label{hrel:lem}
Let $w\in W_0$.
\begin{itemize}
\item[(i)]  If $\omega\in P^+$ is minuscule, then
\begin{equation*}
T_w^{-1} Y^\omega T_{v_\omega}^{-1}=Y^{w^{-1}\omega } T^{-1}_{v_\omega w}.
\end{equation*}
\item[(ii)]  If $s=s_{\alpha_0,0}$, then
 \begin{equation*}
 T^{-1}_w T_0^{\text{sign} (w^{-1}\alpha_0)}  =Y^{w^{-1}\alpha_0}T^{-1}_{sw}.
 \end{equation*}
\end{itemize}
\end{lemma}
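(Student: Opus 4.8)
The plan is to establish both identities by induction on $\ell(w)$, reducing the inductive step to a short computation with the Bernstein--Lusztig--Zelevinsky cross relations \eqref{cross-rel2}, the quadratic relation \eqref{quadratic-rel}, and elementary length bookkeeping in $W$. Throughout I would write $w=w's_j$ with $\ell(w)=\ell(w')+1$ (so that $\beta:=w'\alpha_j\in R^+$ and $T_w^{-1}=T_j^{-1}T_{w'}^{-1}$), descending the induction by choosing, for $w\neq 1$, a simple $\alpha_j$ with $w\alpha_j\in R^-$.

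For part (i) the base case $w=1$ is trivial. In the inductive step the hypothesis for $w'$ turns $T_w^{-1}Y^\omega T_{v_\omega}^{-1}$ into $T_j^{-1}Y^\lambda T_u^{-1}$ with $\lambda:=w'^{-1}\omega$ and $u:=v_\omega w'$, so it suffices to verify $T_j^{-1}Y^\lambda T_u^{-1}=Y^{s_j\lambda}T_{us_j}^{-1}$. Minusculity forces $\langle\lambda,\alpha_j^\vee\rangle=\langle\omega,\beta^\vee\rangle\in\{0,1\}$, and \eqref{cross-rel2} combined with $T_j^{-1}=T_j-(q_j-q_j^{-1})$ gives $T_j^{-1}Y^\lambda=Y^{s_j\lambda}T_j^{-1}$ when $\langle\lambda,\alpha_j^\vee\rangle=0$ and $T_j^{-1}Y^\lambda=Y^{s_j\lambda}T_j$ when $\langle\lambda,\alpha_j^\vee\rangle=1$. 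These two outcomes dovetail precisely with the two possibilities $\ell(us_j)=\ell(u)\pm1$, which are in turn dictated by the sign of $u\alpha_j=v_\omega\beta$ through the inversion set $R(v_\omega)=\{\beta\in R^+\mid\langle\omega,\beta^\vee\rangle>0\}$ recorded in the observations following Lemma \ref{tlem1}. Substituting $T_{us_j}^{-1}=T_j^{-1}T_u^{-1}$ or $T_{us_j}^{-1}=T_jT_u^{-1}$ accordingly closes the step.

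For part (ii) I would run the same induction. The base case $w=1$ reduces to $Y^{\alpha_0}=T_0T_s$, which I would deduce from the affine factorization $t_{\alpha_0}=s_0s$ (where $s_0=s_{\alpha_0,1}$ and $s=s_{\alpha_0,0}$) together with the length identity $\ell(t_{\alpha_0})=1+\ell(s)$, so that $Y^{\alpha_0}=T_{t_{\alpha_0}}=T_{s_0}T_s$. The inductive step now splits according to $\langle w'^{-1}\alpha_0,\alpha_j^\vee\rangle=\langle\alpha_0,\beta^\vee\rangle\in\{0,1,2\}$, where quasi-minusculity guarantees that the value $2$ occurs exactly when $\beta=\alpha_0$, i.e. $w'^{-1}\alpha_0=\alpha_j$ (whence $q_j=q_0$). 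For the values $0$ and $1$ the argument is verbatim that of part (i): here $s_j$ preserves the sign of $w'^{-1}\alpha_0$, so $\mathrm{sign}(w^{-1}\alpha_0)=\mathrm{sign}(w'^{-1}\alpha_0)$ and the power of $T_0$ is unaffected, while the sign of $s_{\alpha_0}\beta$ (governing $\ell(sw's_j)$ versus $\ell(sw')$) is controlled by the inversion set $R(s_{\alpha_0})=\{\beta\in R^+\mid\langle\alpha_0,\beta^\vee\rangle>0\}$ of the reflection in the highest short root, exactly mirroring the role of $v_\omega$ above.

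The genuinely new case, which I expect to be the main obstacle, is $\langle\alpha_0,\beta^\vee\rangle=2$. Now $s_j$ flips the sign of $w'^{-1}\alpha_0=\alpha_j$, so the exponent of $T_0$ jumps from $+1$ to $-1$ and a naive substitution leaves an unwanted factor $T_0^{-2}$. The way out rests on the coincidence $s_{\alpha_0}w'=w's_j$ (equivalent to $s_{w'^{-1}\alpha_0}=s_{\alpha_j}$), which forces $sw'=w$ and $sw=w'$. With this, the induction hypothesis for $w'$ reads $T_{w'}^{-1}T_0=Y^{\alpha_j}T_j^{-1}T_{w'}^{-1}$, hence $T_0^{-1}=T_{w'}T_jY^{-\alpha_j}T_{w'}^{-1}$; feeding this into $T_w^{-1}T_0^{-1}=T_j^{-1}T_{w'}^{-1}T_0^{-1}$ and cancelling $T_j^{-1}T_j$ collapses the expression to $Y^{-\alpha_j}T_{w'}^{-1}=Y^{w^{-1}\alpha_0}T_{sw}^{-1}$, as required. (Quasi-minusculity enters here only through $q_j=q_0$, ensuring that $T_0$ and $T_j$ obey compatible quadratic relations, and through the trichotomy above.) Since all of (i) and (ii) are classical Bernstein--Lusztig relations, one may alternatively simply cite \cite{mac:affine}; the induction sketched here is the self-contained route, whose only non-formal inputs are the inversion-set descriptions of $v_\omega$ and $s_{\alpha_0}$ and the coincidence $s_{\alpha_0}w'=w's_j$.
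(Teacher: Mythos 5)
Your argument is correct, but it takes a genuinely different route from the paper, whose entire proof of this lemma is a citation: both identities are referred to \cite[(3.3.2)]{mac:affine}, with (i) and (ii) identified as \emph{loc.\ cit.}\ (3.3.3) and (3.3.6). What you have written out is, in effect, the standard Bernstein--Lusztig induction that underlies those formulas in Macdonald's book, and all the pivotal points check out: in part (i) the identification $\langle w'^{-1}\omega,\alpha_j^\vee\rangle=\langle\omega,\beta^\vee\rangle\in\{0,1\}$ with $\beta=w'\alpha_j\in R^+$, the two commutation outcomes $T_j^{-1}Y^\lambda=Y^{s_j\lambda}T_j^{-1}$ (pairing $0$) and $T_j^{-1}Y^\lambda=Y^{s_j\lambda}T_j$ (pairing $1$), and their matching with $\ell(v_\omega w's_j)=\ell(v_\omega w')\pm 1$ via $R(v_\omega)=\{\beta\in R^+\mid\langle\omega,\beta^\vee\rangle>0\}$; in part (ii) the base case $Y^{\alpha_0}=T_0T_s$ (here $\ell(t_{\alpha_0})=\langle\alpha_0,2\rho^\vee\rangle=2+\#\{\alpha\in R^+\setminus\{\alpha_0\}\mid\langle\alpha_0,\alpha^\vee\rangle=1\}=1+\ell(s)$, using $R(s)=\{\alpha\in R^+\mid\langle\alpha,\alpha_0^\vee\rangle>0\}$, which follows from dominance of $\alpha_0$ --- worth a line, since it is the one length fact you assert without proof), the sign bookkeeping (pairings $0,1$ preserve $\mathrm{sign}(w'^{-1}\alpha_0)$ because $\lambda'=\pm\alpha_j$ would force pairing $\pm 2$), and the pairing-$2$ collapse, where $w'\alpha_j=\alpha_0$ gives $sw'=w$, $sw=w'$, and $T_w^{-1}T_0^{-1}=T_j^{-1}T_{w'}^{-1}\cdot T_{w'}T_jY^{-\alpha_j}T_{w'}^{-1}=Y^{-\alpha_j}T_{w'}^{-1}$ exactly as you say. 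One small over-claim: in that last case $q_j=q_0$ plays no role in your computation --- the cancellation $T_j^{-1}T_j$ is purely formal, and quasi-minusculity enters only through the trichotomy forcing $\beta=\alpha_0$; the coincidence $q_j=q_0$ is needed elsewhere in the paper (Lemmas \ref{tlem1}, \ref{tlem2} and Appendix \ref{appB}) but not here. In sum, the paper buys brevity by outsourcing to \cite{mac:affine}; your induction buys self-containment and makes explicit exactly where (quasi-)minusculity and the inversion sets of $v_\omega$ and $s_{\alpha_0}$ intervene, which dovetails nicely with the Case $(A)/(B)/(C)$ analysis the paper itself performs in Section \ref{sec7} and Appendix \ref{appB}.
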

\begin{proof}
Both relations are a consequence of \cite[(3.3.2)]{mac:affine}. More specifically, (i) and
(ii) amount to {\em loc. cit.}  (3.3.3) and (3.3.6), respectively.
\end{proof}

\subsection{Proof of  $u \mathcal{J}=\mathcal{J} I_u$}
It is sufficient to verify the intertwining relation for $u=u_\omega$ with $\omega\in P^+$ minuscule (cf. Eq. \eqref{finite-group}).

Let $f\in C(P)$ and let $\mu,\omega\in P$ with $\omega$ minuscule. By definition, we have that
$$(u_\omega \mathcal{J}f)(w_o\mu)=q_{t_\nu}q_{w_\nu} (I^{-1}_{w_\nu^{-1}}f)(\nu_+)\quad
\text{with}\quad \nu :=u_\omega^{-1}w_o\mu .$$
Upon setting $\tilde{w}:=w_\mu w_o v_\omega^{-1}$, it is readily seen that
$\tilde{w} \nu =\mu_+ +w_\mu\omega^* =\nu_+$
in view of Lemma \ref{weights:lem} part (i). Hence, invoking of Eq. \eqref{stable} infers that
$$(u_\omega \mathcal{J}f)(w_o\mu)=q_{t_\nu}q_{\tilde{w}} (I^{-1}_{\tilde{w}^{-1}}f)(\nu_+).$$
Similarly, we have that
\begin{align*}
(\mathcal{J}I_{u_\omega}f)(w_o \mu)&=  (\mathcal{J}t_\omega I^{-1}_{v_\omega}f)(w_o \mu)\\
&=
 q_{t_{w_o\mu}} q_{w_{w_o\mu}} (I^{-1}_{w^{-1}_{w_o\mu}} t_\omega I^{-1}_{v_\omega}f)(\mu_+)\\
 &= q_{t_{\mu}} q_{w_{\mu}w_o} (I^{-1}_{w_ow^{-1}_{\mu}} t_\omega I^{-1}_{v_\omega}f)(\mu_+)
\end{align*}
(where in the last step we have again applied Eq. \eqref{stable}).
The stated equality now follows because
$$
q_{t_\nu}q_{\tilde{w}} = q_{t_{\mu_++w_\mu\omega^*}} q_{v_\omega w_o w_\mu^{-1}}\stackrel{(i)}{=}
q_{t_{\mu_+}}q_{w_ow_\mu^{-1}}= q_{t_{\mu}} q_{w_{\mu}w_o}
$$
and
$$
(I^{-1}_{\tilde{w}^{-1}}f)(\nu_+)=   (t_{w_\mu w_o \omega} I^{-1}_{v_\omega w_ow_\mu^{-1}}f)(\mu_+) \stackrel{(ii)}{=}
(I^{-1}_{w_ow^{-1}_{\mu}} t_\omega I^{-1}_{v_\omega}f)(\mu_+)  ,
$$
where in steps $ (i)$ and $(ii)$ we relied
on the relation $$v_\omega w_o w_\mu^{-1} t_{\mu_++w_\mu\omega^*} =
u_\omega^{-1} w_ow_\mu^{-1}t_{\mu_+} $$ (with $u_\omega\in\Omega$) and Lemma \ref{hrel:lem} part (i) with $w=w_ow_\mu^{-1}$, respectively.

\subsection{Proof of  $\hat{T}_0  \mathcal{J}=\mathcal{J} I_0$}
Let $f\in C(P)$ and let $\mu\in P$.
By definition (and application of Eq. \eqref{stable}) it is immediate that
\begin{eqnarray}\label{LHS}
\lefteqn{(\hat{T}_0 \mathcal{J} f)(w_o \mu)=
q_0 q_{t_\mu}q_{w_\mu w_o} (I^{-1}_{w_ow_\mu^{-1}}f)(\mu_+) }  && \\
&&+  \chi_0 (w_o\mu )
\left( q_{t_\nu}q_{w_\nu} (I^{-1}_{w_\nu^{-1}}f)(\nu_+)
-q_{t_\mu}q_{w_\mu w_o} (I^{-1}_{w_ow_\mu^{-1}}f)(\mu_+)
\right)  \nonumber
\end{eqnarray}
and $$(\mathcal{J}I_0f)(w_o\mu)=q_{t_\mu}q_{w_\mu w_o} (I^{-1}_{w_ow_\mu^{-1}}I_0 f)(\mu_+)$$ ($=
q_{t_\mu}q_{w_\mu w_o} (I^{-1}_{w_ow_\mu^{-1}}t_{\alpha_0}I^{-1}_s f)(\mu_+)$), with $\nu: = s_0w_o\mu=sw_o(\mu+\alpha_0)$  and
$s:=s_{\alpha_0,0}$, respectively. We will distinguish three disjoint situations.

{\em Case (A):} $\mu_++w_\mu\alpha_0\not\in P^+$.
By Lemma \ref{weights:lem} part (iia) we have in this case that $w_\mu\alpha_0=-\alpha_j$ with $\langle \mu_+,\alpha_j^\vee\rangle =1$ for some $1\leq j\leq n$.  But then $q_0=q_j$ and $w_o\mu\in V_0$, i.e. $s_0(w_o\mu )=w_o\mu$,  $\chi_0(w_o\mu)=1$, $\nu_+=\mu_+$. The stated equality thus reduces to
$$(I^{-1}_{w_ow_\mu^{-1}} I_0 f)(\mu_+)=q_0(I^{-1}_{w_ow_\mu^{-1}}f)(\mu_+)$$ (because the terms within the bracket on the second line of Eq. \eqref{LHS} now cancel each other by Eq. \eqref{stable}). In view of Lemma \ref{hrel:lem} part (ii) (with $w=w_ow_\mu^{-1}$) this amounts to the equation
$(t_{\alpha_j}I^{-1}_{sw_ow_\mu^{-1}}f)(\mu_+)=q_0 ( I^{-1}_{w_ow_\mu^{-1}}f)(\mu_+)$. Since
$sw_ow_\mu^{-1}=w_ow_\mu^{-1}s_j$, $\ell (w_ow_\mu^{-1}s_j)=\ell (w_ow_\mu^{-1})+1$, and $q_0=q_j$, the latter equation can be rewritten as $$(t_{\alpha_j}I^{-1}_jg)(\mu_+)=q_jg(\mu_+)\quad \text{with}\quad g:=I^{-1}_{w_ow_\mu^{-1}}f.$$ This last equality is immediate (for any $g\in C(P)$) from the quadratic relation $I_j^{-1}=I_j-(q_j-q_j^{-1})$ together with the definition of $I_j$ (taking into account that
$s_j\mu_+=t_{\alpha_j}^{-1}\mu_+$).

From now on we will assume that $\mu_++w_\mu\alpha_0\in P^+$ (i.e. we are not in Case (A)).
Then---upon setting $\tilde{w}:=w_\mu w_os$---it is clear that
$\tilde{w}\nu=\mu_++w_\mu\alpha_0=\nu_+$. Hence, application of Eq. \eqref{stable}  allows us to rewrite
$q_{w_\nu} (I^{-1}_{w_\nu^{-1}}f)(\nu_+)$ as
$$q_{\tilde{w}} (I^{-1}_{\tilde{w}^{-1}}f)(\nu_+)= q_{\tilde{w}} (t_{w_\mu w_o\alpha_0}I^{-1}_{sw_ow_\mu^{-1}}f)(\mu_+).$$  Combining this with the relation
\begin{equation}\label{length-rel}
q_{\tilde{w}}q_{t_\nu}=q_{w_\mu w_o}q_{t_\mu}q_0^{\text{sign}(w_\mu\alpha_0)}
\end{equation}
(proven below) and division by common factors turns Eq. \eqref{LHS} into
\begin{eqnarray}
\lefteqn{q_{t_\mu}^{-1}q_{w_\mu w_o}^{-1}(\hat{T}_0 \mathcal{J} f)(w_o \mu)=q_0  (I^{-1}_{w_ow_\mu^{-1}}f)(\mu_+)  } && \\
&&
+  \chi_0 (w_o\mu )
\left(  q_0^{\text{sign}(w_\mu\alpha_0)} (t_{w_\mu w_o\alpha_0}I^{-1}_{sw_ow_\mu^{-1}}f)(\mu_+)-(I^{-1}_{w_ow_\mu^{-1}}f)(\mu_+)
\right) , \nonumber
\end{eqnarray}
which must now be shown to coincide with  $(I^{-1}_{sw_ow_\mu^{-1}}I_0 f)(\mu_+)$.

{\em Case (B):}  $w_\mu\alpha_0\in R^-$ (and $\mu_++w_\mu\alpha_0\in P^+$).
By Lemma \ref{weights:lem} part (iib) we have in this case that $\langle \mu_+, w_\mu\alpha_0^\vee \rangle \leq -2$, whence
$\chi_0 (w_o\mu )=q_0$. The stated equality therefore reduces to
$$(t_{w_\mu w_o\alpha_0}I^{-1}_{sw_ow_\mu^{-1}}f)(\mu_+)=
 (I^{-1}_{w_ow_\mu^{-1}}I_0 f)(\mu_+),$$
which follows from Lemma \ref{hrel:lem} part (ii) (with $w=w_ow_\mu^{-1}$).

{\em Case (C):}  $w_\mu\alpha_0\in R^+$ (and $\mu_++w_\mu\alpha_0\in P^+$).
Now $\chi_0(w_o\mu)=q_0^{-1}$, whence the stated equality becomes
$$(q_0-q_0^{-1})(I^{-1}_{w_ow_\mu^{-1}}f)(\mu_+)+(t_{w_\mu w_o \alpha_0} I^{-1}_{sw_ow_\mu^{-1}}f)(\mu_+)
=(I^{-1}_{w_ow_\mu^{-1}}I_0f)(\mu_+).$$ Applying the quadratic relation $I_0=I_0^{-1}+(q_0-q_0^{-1})$ rewrites this as $$(t_{w_\mu w_o \alpha_0} I^{-1}_{sw_ow_\mu^{-1}}f)(\mu_+)=(I^{-1}_{w_ow_\mu^{-1}}I_0^{-1}f)(\mu_+),$$ which again follows by Lemma \ref{hrel:lem} part (ii) (with $w=w_ow_\mu^{-1}$).

It remains to verify the above relation in Eq. \eqref{length-rel} for the length multiplicative function. Indeed, straightforward
manipulations reveal that
\begin{eqnarray*}
\lefteqn{q_{\tilde{w}}q_{t_\nu}=q_{sw_ow_\mu^{-1}}q_{t_{\mu_++w_\mu\alpha_0}}
\stackrel{(i)}{=}q_{s_0w_ow_\mu^{-1}t_{\mu_+}}}&& \\
&& \stackrel{(ii)}{=}q_{w_ow_\mu^{-1}}q_{t_{\mu_+}}q_0^{\text{sign}(w_\mu\alpha_0)}=
q_{w_\mu w_o}q_{t_{\mu}}q_0^{\text{sign}(w_\mu\alpha_0)},
\end{eqnarray*}
where in steps  $(i)$ and $(ii)$ we relied on the elementary relations $$s_0w_ow_\mu^{-1}t_{\mu_+}=sw_ow_\mu^{-1}t_{\mu_++w_\mu\alpha_0}$$ and $$ \ell (s_0w_ow_\mu^{-1}t_{\mu_+})=\ell (w_ow_\mu^{-1}t_{\mu_+})+\text{sign}(w_\mu\alpha_0),$$ respectively.

\section{Explicit formulas for $R=A_{N-1}$}\label{appC}

In this appendix we exhibit explicit formulas describing the differential-reflection representation and the integral-reflection representation for the root system $A_{N-1}$, as well as the corresponding discrete difference operators diagonalized by the Hall-Littlewood polynomials. To facilitate their direct use in the theory of symmetric functions it will be convenient to employ a central extension of the $A_{N-1}$-type extended affine Weyl group and its Hecke algebra (associated with $GL_N$ rather than $SL_N$).

\subsection{Affine permutation group}
For $R=A_{N-1}$ the finite Weyl group amounts to the permutation group $S_N$ and the
corresponding extended affine Weyl group is given by the affine permutation group
$W=S_N\ltimes \mathbb{Z}^N$, which acts
on $\mathbb{R}^N$ by permuting the elements of the standard basis $e_1,\ldots ,e_N$ and
translating over vectors in the integral lattice, i.e.
for $w\in S_N$, $\lambda=(\lambda_1,\ldots ,\lambda_N)\in\mathbb{Z}^N$ and
$x=(x_1,\ldots ,x_N)\in \mathbb{R}^N$:
\begin{subequations}
\begin{align}
wx& =(x_{w^{-1}(1)},\ldots ,x_{w^{-1}(N)}) ,\\
t_\lambda x&=(x_1+\lambda_1,\ldots,x_N+\lambda_N) .
\end{align}
\end{subequations}
The group $W$ is generated by the finite transpositions
\begin{subequations}
\begin{equation}
s_jx=(x_1,\ldots ,x_{j-1},x_{j+1},x_j ,x_{j+2},\ldots ,x_N)\qquad (1\leq j< N)
\end{equation}
and the affine generator
\begin{equation}
u x=(x_N+1,x_1,\ldots,x_{N-1})
\end{equation}
\end{subequations}
(so $u^N=t_{e_1+\cdots +e_n}$ lies in the center of $W$).
The translations over the vectors of the standard basis can be expressed in terms of these generators as:
\begin{equation}  \label{reducedtej}
t_{e_j}=s_{j-1}\cdots s_2 s_1  u s_{N-1}s_{N-2}\cdots s_j \qquad (1\leq j\leq N).
\end{equation}

\subsection{Affine Hecke algebra}
The extended affine Hecke algebra $\mathcal{H}$ associated with $W$ is the complex associative algebra generated by
the invertible elements $T_1,\dots, T_{N-1}$ and $T_u$ subject to the relations:
\begin{subequations}
\begin{align}
(T_j-q)&(T_j+q^{-1}) =0 \qquad \quad    (1\leq j< N) ,\\
T_j T_k&= T_k T_j  \qquad \quad      ( 1\leq j<k-1< N-1),\\
T_j T_{j+1}T_{j+1} &= T_{j+1} T_j T_{j+1}  \qquad   (1\leq j< N-1),\\
T_u T_j &=T_{j+1} T_u\qquad \qquad   (1\leq j< N-1),\\
T_u^NT_j&=T_jT_u^N \qquad \qquad \quad  (1\leq j< N).
  \end{align}
\end{subequations}

The Bernstein-Lusztig-Zelevinsky basis for $\mathcal{H}$ is in this situation of the form $T_wY^\lambda$ ($w\in S_N$, $\lambda\in\mathbb{Z}^N$), where $T_w:=T_{s_{j_1}}\cdots T_{s_{j_\ell}}$ for $w=s_{j_1}\cdots s_{j_\ell}$
a reduced expression ($\ell =\ell (w)$) and $Y^\lambda:=Y_1^{\lambda_1} \dots Y_N^{\lambda_N}$ with (cf. Eq. \eqref{reducedtej})
\begin{equation}
Y_j:=T_{j-1}^{-1}\cdots T_2^{-1}T_1^{-1} T_u T_{N-1} T_{N-2}\cdots T_{j+1}T_j\qquad (1\leq j\leq N)
\end{equation}
pairwise commutative.

\subsection{Difference-reflection representation} For $1\leq j <N$, let
$\hat{T}_j:C(\mathbb{Z}^N)\to C(\mathbb{Z}^N)$ be defined as
\begin{equation}
(\hat T_j f)(\lambda)= \\
\begin{cases}
(q-q^{-1})f(\lambda)  + q^{-1}f(s_j\lambda)   & \text{if $\lambda_j> \lambda_{j+1}$}\\
qf(\lambda)   & \text{if $\lambda_j= \lambda_{j+1}$} \\
 qf(s_j\lambda)   & \text{if $\lambda_j< \lambda_{j+1}$}\\
\end{cases}
\end{equation}
($f\in C(\mathbb{Z}^N)$, $\lambda\in\mathbb{Z}^N$). The difference-reflection representation
$h\mapsto \hat{T}(h)$ ($h\in \mathcal H$) of the extended affine Hecke algebra on $\mathcal C(\mathbb Z^N)$
is determined by the assignment
$T_j\mapsto \hat{T}_j$ ($1\leq j< N$) and $T_u\mapsto u$.

\subsection{Integral-reflection representation}
For $1\leq j <N$, let
$I_j:C(\mathbb{Z}^N)\to C(\mathbb{Z}^N)$ be defined as
\begin{eqnarray}
\lefteqn{(I_jf)(\lambda)=q f(s_j\lambda)+(q-q^{-1})\times} && \\
&&
\begin{cases}
\displaystyle
-\sum_{l=1}^{\lambda_j-\lambda_{j+1}}  f(\lambda_1,\dots, \lambda_j-l, \lambda_{j+1}+l,\dots, \lambda_N) & \text{if $\lambda_j> \lambda_{j+1}$}\\
\qquad\qquad\qquad 0 & \text{if $\lambda_j= \lambda_{j+1}$} \\
\displaystyle
\sum_{l=0}^{\lambda_{j+1}-\lambda_j-1}  f(\lambda_1,\dots, \lambda_j+l, \lambda_{j+1}-l,\dots, \lambda_N)  & \text{if $\lambda_j< \lambda_{j+1}$}
\end{cases} \nonumber
\end{eqnarray}
($f\in C(\mathbb{Z}^N)$, $\lambda\in\mathbb{Z}^N$).
The integral-reflection representation
$h\mapsto I(h)$ ($h\in \mathcal H$) of the extended affine Hecke algebra on $\mathcal C(\mathbb Z^N)$
is determined by the assignment
$T_j\mapsto I_j$ ($1\leq j< N$) and $Y^\lambda\mapsto t_\lambda$ ($\lambda\in\mathbb{Z}^N$).

\subsection{Central difference operators}
The elementary symmetric polynomials
\begin{equation}
m_r (Y):=\sum_{\substack{J\subset \{ 1,\ldots ,N\} \\ |J|=r}} \prod_{j\in J} Y_j\qquad (r=1,\ldots ,N)
\end{equation}
lie in the center of $\mathcal{H}$.  The explicit action in $C(\mathbb{Z}^N)$ of the corresponding operators $\widehat{m_1(Y)},\ldots ,\widehat{m_N(Y)}$ under the difference-reflection representation is of the form:
\begin{subequations}
\begin{equation}
\widehat{m_r(Y)}=\epsilon M_r \epsilon^{-1}\qquad (r=1,\ldots ,N),
\end{equation}
with $\epsilon:\mathcal C(\mathbb Z^N)\to \mathcal C(\mathbb Z^N)$ and  $M_r:\mathcal C(\mathbb Z^N)\to \mathcal C(\mathbb Z^N)$ given by
\begin{align}
(\epsilon f)(\lambda_1,\dots, \lambda_N)&=q^{2\langle \rho,\lambda_+\rangle}f(\lambda_N,\lambda_{N-1},\dots, \lambda_1) , \\
(M_r f)(\lambda)&=\sum_{\substack{J\subset\{1,2,\dots, N\} \\  |J|=r}} q^{2\ell(w_{w_\lambda(\lambda-e_J)})} f(\lambda-e_J)
\end{align}
\end{subequations}
($f\in C(\mathbb{Z}^N)$, $\lambda\in\mathbb{Z}^N$).
Here  $\rho:=\frac{1}{2}(N-1,N-3,\dots, 3-N, 1-N)$,
$\lambda_+$ is obtained from $\lambda$ by reordering the components of $\lambda$ in (weakly) decreasing order, $w_\lambda$ denotes the shortest permutation in $S_N$ taking $\lambda$ to $\lambda_+$,
and $e_{J}:=\sum_{j\in J} e_j$.

The restriction of the action of $\widehat{m_r(Y)}$ to $C(\mathbb{Z}^N)^{S_N}\simeq C(\mathbb{Z}^N_{\geq})$ with
\begin{equation*}
\mathbb{Z}^N_{\ge}:=\{ \lambda\in \mathbb{Z}^N\mid \lambda_1\geq \dots \geq \lambda_N\}
\end{equation*}
is given by
\begin{subequations}
\begin{eqnarray}
\lefteqn{(\widehat{m_r(Y)}f)(\lambda)=} &&\\
&& \sum_{\substack{J\subset\{1,2,\dots, N\},\,  |J|=r \\ \lambda-e_J\in \mathbb Z^N_{\ge} }}
V_{\lambda,J^c}(q^2) f(\lambda-e_J),
\quad (f\in \mathcal C(\mathbb Z^N_\ge), \lambda\in \mathbb Z^N_\ge) , \nonumber
\end{eqnarray}
where $J^c:=\{1,\dots, N\}\backslash J$ and
\begin{equation}
V_{\lambda, J}(q^2):= q^{-2\langle \rho,e_J\rangle}\prod_{\substack{1\le k<l\le N \\ k\in J, l\in J^c\\ \lambda_k=\lambda_l}}\frac{1-q^{2(l-k+1)}}{1-q^{2(l-k)}} .
\end{equation}
\end{subequations}

The diagonal action of $\widehat{m_r(Y)}$ on the Hall-Littlewood basis
entails the following Pieri formula:
\begin{equation}
m_r p_\lambda=\sum_{\substack{J\subset\{1,2,\dots, N\},\,  |J|=r \\ \lambda+e_J\in \mathbb Z^N_{\ge} }}
V_{\lambda,J}(q^2) p_{\lambda+e_J}, \quad (r=1,\dots, N)
\end{equation}
for the Hall-Littlewood polynomials
\begin{equation}
p_\lambda=q^{2\langle \rho,\lambda\rangle}\sum_{w\in S_N} x^{w\lambda} \prod_{1\leq k<l\leq N}\frac{x_{wk}-q^2x_{wl}}{x_{wk}-x_{wl}}
\quad (\lambda\in \mathbb Z^N_\ge),
\end{equation}
where $x^\mu:=x_1^{\mu_1}\cdots x_N^{\mu_N}$ ($\mu\in\mathbb{Z}^N$).
This Pieri formula amounts to a classic Pieri formula for the Hall-Littlewood polynomials due to Morris
\cite{mor:note}.

\vspace{3ex}
\noindent {\bf Acknowledgments.} We thank the referees for suggesting some improvements concerning the presentation.
Our verification of the braid relations in Eq. \eqref{br} for the rank-two root systems---via a direct computer assisted computation---benefited a lot  from
Stembridge's Maple packages {\tt COXETER} and
{\tt WEYL}.

\bibliographystyle{amsplain}

\end{document}